\documentclass[11pt,reqno]{amsart}

\usepackage[T1]{fontenc}
\usepackage[utf8]{inputenc}
\usepackage{lmodern,microtype}
\usepackage[a4paper,margin=31mm]{geometry}
\usepackage{amsmath,amssymb,amsthm,mathtools}
\usepackage{enumitem}
\usepackage{aliascnt}
\usepackage{xcolor,booktabs,tabularx}

\usepackage[hidelinks]{hyperref}
\usepackage[nameinlink,noabbrev]{cleveref}

\numberwithin{equation}{section}

\newtheorem{theorem}{Theorem}[section]
\newaliascnt{proposition}{theorem}
\newtheorem{proposition}[proposition]{Proposition}
\aliascntresetthe{proposition}
\newaliascnt{lemma}{theorem}
\newtheorem{lemma}[lemma]{Lemma}
\aliascntresetthe{lemma}
\newaliascnt{corollary}{theorem}
\newtheorem{corollary}[corollary]{Corollary}
\aliascntresetthe{corollary}
\newaliascnt{question}{theorem}
\newtheorem{question}[question]{Question}
\aliascntresetthe{question}

\theoremstyle{definition}
\newaliascnt{definition}{theorem}

\aliascntresetthe{definition}

\theoremstyle{remark}
\newaliascnt{remark}{theorem}
\newtheorem{remark}[remark]{Remark}
\aliascntresetthe{remark}

\crefname{theorem}{Theorem}{Theorems}
\Crefname{theorem}{Theorem}{Theorems}
\crefname{proposition}{Proposition}{Propositions}
\Crefname{proposition}{Proposition}{Propositions}
\crefname{lemma}{Lemma}{Lemmas}
\Crefname{lemma}{Lemma}{Lemmas}
\crefname{corollary}{Corollary}{Corollaries}
\Crefname{corollary}{Corollary}{Corollaries}
\crefname{question}{Question}{Questions}
\Crefname{question}{Question}{Questions}
\crefname{definition}{Definition}{Definitions}
\Crefname{definition}{Definition}{Definitions}
\crefname{remark}{Remark}{Remarks}
\Crefname{remark}{Remark}{Remarks}

\newcommand{\C}{\mathbb C}
\newcommand{\B}{\mathbb B}
\newcommand{\norm}[1]{\left\lVert #1\right\rVert}
\newcommand{\abs}[1]{\left\lvert #1\right\rvert}
\newcommand{\Ucal}{\mathcal U}

\newcommand{\PSH}{\operatorname{PSH}}

\title{Complex structures of non-autonomous basins}
\author{Luka Boc Thaler}
\thanks{The author was supported by the research program P1-0291 and project J1-70033, all from ARRS, Republic of Slovenia}

\address{L. Boc Thaler: Faculty of Mathematics and Physics, University of Ljubljana, SI--1000 Ljubljana, Slovenia. Institute of Mathematics, Physics and Mechanics, Jadranska 19, 1000 Ljubljana, Slovenia.} \email{luka.boc@fmf.uni-lj.si}

\subjclass[2020]{{Primary 32H50; Secondary 32U05, 37F80}}
\keywords{{Non-autonomous basins, Fatou--Bieberbach domains, Short $\C^2$, Parabolic manifolds, holomorphic dynamics}}

\begin{document}

\begin{abstract}
{We study non-autonomous basins of $F_j$ automorphisms of $\C^2$ satisfying $$a_j\|p\|\leq\|F_j(p)\|\leq b_j\|p\| \qquad (\forall p\in \B)$$ where $0<a_j<b_j<b<1$. We show that if  $b_j^m<a_j$ for some integer $m$ and all $j>0$ then every bounded plurisubharmonic function on the non-autonomous basin is constant. Moreover, if in addition we have $\prod_{i=1}^{j-1}b_i\leq b_j^m$ then the basin is biholomorphic to $\C^2$. 

For the quadratic H\'enon family
$$F_n(z,w)=\bigl(z^2+c^{q_n}w,c^{q_n}z\bigr),
\qquad 0<c<1,$$
we study how the asymptotic behavior of the sequence $q_n\to\infty$ influences the complex structure of the basin. In particular, we identify broad regimes in which the non-autonomous basin is either biholomorphic to $\C^2$ or is a Short $\C^2$, thereby isolating possible sequences for which other complex structures may  occur.
 }
\end{abstract}

\maketitle
\enlargethispage{4pt}

\section{Introduction}
Let $F$ be an automorphism of $\C^2$ which fixes the origin. If the origin is an attracting fixed point, that is, if both eigenvalues of $DF(0)$ have modulus less than one, then the classical work of Rosay--Rudin \cite{RR} asserts that its basin of attraction 
$$\Omega_F=\{p\in \C^2\mid F^n(p)\xrightarrow{n\rightarrow\infty} 0\}$$
is biholomorphic to $\C^2$. This provides a useful method for constructing proper subsets of $\C^2$ that are biholomorphic to $\C^2$, a phenomenon that cannot occur in dimension one. Such a proper subset is called a Fatou-Bieberbach domain. 

Instead of considering autonomous orbits, it is natural to consider the non-autonomous analogue. Given a sequence $(F_n)\subset \operatorname{Aut}(\C^2)$, we write
$$F(n):=F_n\circ\ldots \circ F_2\circ F_1, \qquad {F(0):=\operatorname{id}.}$$
If all maps $F_n$ have an attracting fixed point at the origin, we can then ask about the properties of the non-autonomous basin of attraction
$$
\Omega:=\{p\in \C^2\mid F(n)(p)\xrightarrow{n\rightarrow\infty} 0\}.
$$
In general, non-autonomous basins need not be open \cite[Theorem 6]{PW}. Moreover, it can happen that $\Omega$ has empty interior. It is therefore natural to impose additional conditions on the sequence $(F_n)$. For example, we can assume that the sequence $(F_n)$ is uniformly attracting at the origin, that is, there exist $0<C<D<1$ and $R>0$ such that 
$$C\|p\|\leq \|F_n(p)\|\leq D\|p\|, \qquad \forall p\in \mathbb{B}_R.$$
In this case it is easy to see that $\Omega=\bigcup_{n\geq0}F(n)^{-1}(\mathbb{B}_R)$ is an open set. It is therefore natural to ask about its complex structure. {In the autonomous case, after a fixed linear change of coordinates and restriction to a sufficiently small ball, the uniform attraction condition holds, and} we know that $\Omega\simeq\C^2$. Hence one may ask whether the same holds for every uniformly attracting non-autonomous sequence. 
This question is usually referred to as Bedford's conjecture. It was proved under several additional assumptions, see \cite{AAM,FS,Peters,PS, Wold}. In particular, Wold obtained the conclusion under the condition $D^2<C$. The conjecture was {settled} by Bera and Verma \cite{BV}, who proved that every uniformly attracting non-autonomous basin is biholomorphic to $\C^2$.

A natural generalization of the uniformly attracting case is obtained by relaxing either the upper or the lower bound. By allowing $D=1$, one can easily construct a sequence of linear automorphisms $F_n$ for which $\Omega=\{0\}$, {so a uniform upper contraction bound is useful for ensuring that the basin is open.} By contrast, if we retain the uniform upper bound $D<1$ and drop the positive lower bound, we always have $\Omega=\bigcup_{n\geq0}(F(n))^{-1}(\mathbb{B}_R)$, so the basin remains open, but new phenomena can occur. Indeed, given a sequence of H\'enon maps
\begin{equation}\label{family}
    F_n(z,w)=(z^2+a_nw,a_nz),
\end{equation}
where $0<|a_n|<1$, Forn\ae ss proved the following result, see \cite{Fshort}.

\begin{theorem}\label{short} If $|a_{n+1}|\leq|a_n|^2$ for all $n\geq {1}$, then the resulting non-autonomous basin $\Omega$ has the following properties:
 \begin{enumerate}[label=(\roman*)]
     \item $\Omega$ is an increasing union of domains, each biholomorphic to a ball.
     \item The infinitesimal Kobayashi metric of $\Omega$ vanishes identically.
     \item There exists a nonconstant bounded plurisubharmonic function defined on $\Omega$.
 \end{enumerate}
    
\end{theorem}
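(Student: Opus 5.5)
We argue as Fornæss does, using the sequence $F_n(z,w)=(z^2+a_nw,a_nz)$ with $|a_{n+1}|\le|a_n|^2$. Write $\delta_n=|a_n|$, so $\delta_n\le\delta_1^{2^{n-1}}$, and choose a fixed small radius $R$, say $R<1/4$. The first step is to check directly that every $F_n$ maps $\B_R$ into itself and satisfies $\|F_n(p)\|\le D\|p\|$ on $\B_R$ with $D<1$ uniform: for $\|p\|<R$ we have $|z^2+a_nw|\le R|z|+\delta_n|w|$ and $|a_nz|\le\delta_n|z|$. Hence $\Omega=\bigcup_n F(n)^{-1}(\B_R)$, exactly as in the introduction.

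\textbf{(i)} Each $F(n)^{-1}(\B_R)$ is biholomorphic to the ball, since $F(n)$ is an automorphism. These sets increase because $F_{n+1}(\B_R)\subset\B_R$.

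\textbf{(iii)} This is the main construction. Define
\[
G(p)=\lim_{n\to\infty}\Bigl(2^{-n}\log\|F(n)(p)\|-\tfrac{1}{2^{n}}\textstyle\sum_{k\le n}2^{k}\log\delta_k\cdot c_k\Bigr),
\]
or more simply use the normalized Green-type function $G(p)=\limsup_n 2^{-n}\log\|F(n)(p)\|$. The plan has three parts.

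\begin{enumerate}[label=(\alph*)]
\item Show $G$ is plurisubharmonic on $\Omega$ with values in $[-\infty,0]$. It is an upper-semicontinuous regularized limsup of psh functions, bounded above because $\|F(n)(p)\|<R$ eventually.
\item Show $G$ is bounded below on $\Omega$. This uses the key estimate: on the region $|w|\le|z|$, the squaring $z\mapsto z^2$ dominates, so $\|F(n)\|$ decays like $\|\cdot\|^{2^n}$ times controlled factors. The $w$-coordinate, however, is multiplied by $\delta_n$, and $\log\delta_n\approx 2^{n}\log\delta_1$ is of the same exponential order as $2^n$. This is precisely why one needs $\delta_{n+1}\le\delta_n^2$: it guarantees that the contribution $\sum 2^{-k}\log\delta_k$ converges to a finite (negative) constant and does not swamp the quadratic decay. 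I would track $\log|z_n|$ and $\log|w_n|$ inductively, showing $\log\|F(n)(p)\|\ge 2^{n}(\log\|p\|-C)$ for points of $\B_R$. It then follows that $G\ge \log\|F(m)(p)\|\cdot 2^{-m}-C$, which gives a lower bound on each $F(m)^{-1}(\B_R)$. To get a bound uniform in $m$, I would renormalize with $\tilde G=\lim 2^{-n}\log\|F(n)\|$ and truncate by $\max(\tilde G,-M)$, which is still psh and bounded.
\item Show nonconstancy. Take points $(z,0)$ with small $|z|=r$: there $G\approx\log r+\text{const}$, while points near $\partial\B_R$ give a different value. So $\max(G,-M)$ is nonconstant for suitable $M$.
\end{enumerate}

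\textbf{(ii)} We show the infinitesimal Kobayashi metric vanishes. Given $p\in\Omega$ and a vector $v$, we produce holomorphic discs $\phi_n=F(n)^{-1}\circ\psi_n:\Delta\to\Omega$, where $\psi_n$ is a disc in $\B_R$ through $F(n)(p)$ in direction $DF(n)(p)v$. This gives $k_\Omega(p,v)\le \|DF(n)(p)v\|/R'$. We then show $\inf_n\|DF(n)(p)v\|/\operatorname{dist}$-type ratios tend to $0$. The relevant point is that the Jacobian of $F_n$ is $-a_n^2$, which is tiny, so $F(n)$ contracts some direction very strongly, and the strongly contracted direction rotates. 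Concretely, I would use the fact that the expansion of $F(n)^{-1}$ dominates the $2^n$-power contraction radius. Since $F(n)(\B_r)\supset$ a polydisc of size about $\delta_n r$ in $w$, we can compare $\|DF(n)v\|$ with the in-radius. This gives ratios going to $0$ because of the super-exponential decay of $\delta_n$ relative to $r^{2^n}$. This comparison, balancing $\delta_n$ against the $2^n$-th power geometry, is the step I expect to be the main obstacle, together with the lower bound in (iii).
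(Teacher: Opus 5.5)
\textbf{Verdict.} The proposal has a genuine gap in (iii), which is the heart of the theorem: the estimate that makes your function nonconstant is missing, and you misidentify where the hypothesis is used. Part (ii) also leaves its key limit unproved.

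\textbf{Parts (i) and (ii).} Your (i) and the disc estimate $k_\Omega(p,v)\le\|DF(n)(p)v\|/R'$ in (ii) match the paper's argument (\Cref{prop:basic}). You do need to discard finitely many maps before $\B_R$ is invariant, since $|a_1|$ may be close to $1$.

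You never actually prove $DF(n)(p)v\to0$, and the mechanism you propose would not give it. The fact that $\det DF_n=-a_n^2$ is tiny only shows that \emph{some} direction is contracted, not every $v$; no in-radius comparison is needed. The correct reason is elementary and does not use the hypothesis $|a_{n+1}|\le|a_n|^2$ at all. One has $DF_{n+1}(F(n)(p))=\begin{pmatrix}2z_n&a_{n+1}\\ a_{n+1}&0\end{pmatrix}\to0$, because the orbit tends to $0$ and $a_n\to0$. So eventually every factor has norm at most $1/2$.

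\textbf{Part (iii): what is fine.} Plurisubharmonicity of $G^*$ and the truncation $\max(G^*,-M)$ are fine. The truncation makes step (b) unnecessary, which is fortunate, because (b) is false as stated. First, $G(0)=-\infty$. Second, take for instance $|a_n|=c^{3^n}$, which satisfies the hypothesis. Every small sphere contains points $p$ with $F(n-1)(p)\in\{z=0\}$, and for these $\|F(n)(p)\|\le|a_n|$. Since $2^{-n}\log|a_n|\to-\infty$, this contradicts $\log\|F(n)(p)\|\ge 2^n(\log\|p\|-C)$ for large $n$.

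\textbf{Part (iii): what is missing.} Because $G^*$ is a regularized $\limsup$, nonconstancy requires an \emph{upper} bound for $G$ on an open set. That bound must lie strictly below the values of $G$ at other points. Your claim that $G\approx\log r+\mathrm{const}$ on $(z,0)$ is not derived; near the origin the linear terms $a_nw$ and $a_nz$ matter, not just squaring.

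This upper bound is exactly where $|a_{n+1}|\le|a_n|^2$ enters, and your account of the hypothesis is wrong. It gives $2^{-k}\log\delta_k\le\frac12\log\delta_1$, so $\sum_k2^{-k}\log\delta_k$ diverges rather than converging.

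\textbf{How the paper does it.} The paper (\Cref{thm:short}) uses a floor.
\begin{itemize}
\item Put $\beta_n:=|a_1|^{2^n}$. Then $|a_{n+1}|\le\beta_n$ and $\beta_{n+1}=\beta_n^2$.
\item Set $M_n:=\max\{\|F(n)(p)\|_\infty,\beta_n\}$. The recursion gives $M_{n+1}\le 2M_n^2$.
\item Hence $G(p)\le 2^{-N}\log M_N(p)+2^{-N}\log 2$ for every $N$. The right-hand side is continuous in $p$, so $G^*(0)\le\log|a_1|<0$.
\item On the other hand, $G(F(N)^{-1}(r_0,0))\ge 2^{-N}\log r_0\to0$. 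This holds by nonnegativity for positive coefficients, and by a short induction in general.
\end{itemize}

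\textbf{Why the floor is essential.} Without it the argument cannot work. When $q_{n+1}/q_n\to2$ and $\liminf\rho_n=0$, the paper shows that $\Omega$ is parabolic. There your $G^*$ is identically $0$, even though everything else in your outline still applies.
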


In particular, property $(iii)$ implies that $\Omega$ is not biholomorphic to $\C^2$. A domain satisfying these three properties is called a \emph{Short $\C^2$}. By contrast, Forn\ae ss also proved the following.

\begin{theorem}\label{c2} Given a sequence of maps \eqref{family} where $0<|a_n|<b<1$ and $|a_{n+1}|\geq|a_n|^t$ for some fixed  $1<t<2$ and for all $n\geq {1}$, its non-autonomous basin $\Omega$ is biholomorphic to $\C^2$.  
    
\end{theorem}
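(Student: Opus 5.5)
We reduce \Cref{c2} to a Wold-type gluing criterion. For $F_n(z,w)=(z^2+a_nw,a_nz)$ we first choose a small radius $r>0$, independent of $n$ (possible because $|a_n|<b<1$), such that for all $n$ and all $p\in\B_r$
$$\tfrac12|a_n|\,\|p\|\le\|F_n(p)\|\le b'\|p\|, \qquad b<b'<1 .$$
The upper bound holds because the quadratic term $z^2$ is $O(r\|p\|)$ while the linear part has norm $|a_n|<b$. The lower bound comes from $|a_nz|$ together with $|z^2+a_nw|$, after shrinking $r$ relative to $|a_n|$. That shrinking is delicate, since $a_n\to0$ is allowed, so we instead use the estimate $\|F_n(p)\|\ge c|a_n|\,\|p\|$ on a ball whose radius may depend on $n$ but is controlled by $|a_n|$. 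Then $\Omega=\bigcup_n F(n)^{-1}(\B_r)$ is an open increasing union.

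The plan is to construct the biholomorphism as a limit of conjugated polynomial maps, as in the Rosay--Rudin and Wold approach. Set $a^{(n)}=a_n\cdots a_1$. The linear parts $L_n(z,w)=(a_nw,a_nz)$ make $F(n)$ contract at rate roughly $\prod|a_i|$. The condition $|a_{n+1}|\ge|a_n|^t$ with $t<2$ gives a resonance-free gap: the quadratic error $z^2$ at scale $\epsilon$ is $\epsilon^2$, while the next contraction factor is at least $|a_n|^t\gg|a_n|^2$. We then look for normalizing maps $G_n$, which are polynomial automorphisms of bounded degree (degree bounded in terms of $m$ with $t<m/(m-1)$, say), such that
$$G_{n+1}\circ F_{n+1}=T_{n+1}\circ G_n+O(\|p\|^{m}),$$
where $T_n$ is a lower-triangular polynomial automorphism, for instance linear or of the form $(z,w)\mapsto(\alpha z,\beta w+\gamma z^k)$. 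The $G_n$ and $T_n$ are built inductively by killing the non-triangular terms of degree $<m$ in the Taylor expansion. The coefficients remain bounded because each homological equation involves ratios of the form $a_n^{k}/a_{n+1}$, and these are bounded for $k\ge2$ precisely when $|a_{n+1}|\ge|a_n|^t$ with $t<2$.

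Given this, define $\Phi_n=T(n)^{-1}\circ G_n\circ F(n)$ on $\Omega$. On $F(n)^{-1}(\B_r)$ the successive maps satisfy $\Phi_{n+1}-\Phi_n=T(n+1)^{-1}\bigl(O(\|F(n)p\|^{m})\bigr)$. The inverse of the triangular composition expands by at most about $\prod|a_i|^{-C}$, while $\|F(n)p\|^m$ decays like $\prod b'^m$ times an extra factor. We need $m$ large enough that $\|F(n)p\|^m$ beats the expansion of $T(n)^{-1}$. This is where $|a_{n+1}|\ge|a_n|^t$ enters again: it gives $\|F(n)p\|\le\prod b'$ against the lower rate $\prod|a_i|$, with $|a_i|\ge b^{t^{i}}$, and this is the main obstacle. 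The naive comparison fails because $\prod|a_i|$ can decay superexponentially. We therefore compare $\|F(n)p\|$ with $|a^{(n)}|$ directly, via the lower bound $\|F(n)p\|\gtrsim|a^{(n)}|\,\|p\|$ combined with the upper bound through $|a_{n}|\le b$, and we also use that the $T_n$ are triangular, so their inverses expand only polynomially in $|a^{(n)}|^{-1}$.

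If this works, $\Phi_n\to\Phi$ locally uniformly on $\Omega$. The limit $\Phi$ is injective, since it is a limit of injective maps with nondegenerate Jacobian normalized at $0$, by Hurwitz. It is also surjective onto $\C^2$, because $T(n)^{-1}(\B_{r'})$ exhausts $\C^2$ and $\Phi_n(F(n)^{-1}\B_r)$ contains such sets up to small errors, by a Rouché-type argument. Hence $\Omega\simeq\C^2$.
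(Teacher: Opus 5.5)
There is a genuine gap, and it is the step you yourself flag as the main obstacle. For $\Phi_n$ to converge at $p$ you need an \emph{upper} bound $\norm{F(n)(p)}\lesssim\abs{a^{(n)}}$, up to the losses of $T(n)^{-1}$. The lower bound $\norm{F(n)(p)}\gtrsim\abs{a^{(n)}}\norm{p}$ you invoke does not give this. It is also false on a fixed ball: $z^2=-a_nw$ gives $\norm{F_n(p)}=\abs{a_n}^{3/2}\abs{w}^{1/2}\ll\abs{a_n}\norm{p}$ when $\abs{w}\gg\abs{a_n}$. The upper bound $\norm{F(n)(p)}\le C_p(b')^n$ cannot compete with $\abs{a^{(n)}}$, which may be superexponentially small (e.g.\ $a_n=c^{t^n}$).

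Raising $m$ or refining $G_n,T_n$ does not help, because the normal form is not where the difficulty lies. With $a_n=c^{q_n}$ as in the paper, the plain derivative normalization $(DF(n)(0))^{-1}\circ F(n)$ already satisfies the exact recursion $\Phi_{n+1}=\sigma_n\circ\Phi_n$. Its shear coefficients are $\lambda_n=a_1\cdots a_n/a_{n+1}$, with $\lambda_n/\lambda_{n-1}=a_n^2/a_{n+1}\le a_n^{2-t}\le b^{2-t}<1$. This summability, not mere boundedness, is what $t<2$ buys, and by \Cref{thm:summable-core} it yields a Fatou--Bieberbach domain $\Ucal=\Psi(\C^2)\subset\Omega$.

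The real obstruction is in the domain. We have $\Phi_n(F(n)^{-1}(\B_r))=\B_{r/P_n}$ with $P_n=a_1\cdots a_n$. The next shear moves points of this ball by up to $\lambda_n(r/P_n)^2=(r/a_{n+1})\cdot(r/P_n)$, that is, by $r/a_{n+1}\to\infty$ times its radius. So no Rosay--Rudin or Rouch\'e estimate can be uniform on $F(n)^{-1}(\B_r)$ for fixed $r$. You have to work on $F(n)^{-1}(\B_{\epsilon a_{n+1}})=\{\norm{J_n}<\epsilon\}$. The real content of the theorem is that every orbit in $\Omega$ eventually enters these sets, i.e.\ $\Ucal=\Omega$, and your plan contains no argument for this.

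The missing idea is the two-regime dichotomy behind \Cref{lem:reset}. If $\norm{F(n)(p)}>a_{n+1}$, then $\norm{F(n+1)(p)}\le\norm{F(n)(p)}^2+a_{n+1}\norm{F(n)(p)}\le2\norm{F(n)(p)}^2$. So outside the linear regime orbits decay doubly exponentially at rate $2$, whereas $q_{n+1}\le tq_n$ forces $q_n\le t^{n-1}q_1$, hence $\rho_n\to0$. Consequently, on every compact subset of $\Omega$ there are arbitrarily large $n$ with $\norm{F(n)}\le a_{n+1}$, i.e.\ $\norm{J_n}\le1$.

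At such a time, $J_{n+1}=c^{d_{n+1}}Q(J_n)$ gives $\norm{J_{n+1}}\le\sqrt5\,c^{d_{n+1}}$, where $d_{n+1}\ge(2-t)q_{n+1}\to\infty$. Since $\norm{\Phi_{n+1}}=\norm{J_{n+1}}/\lambda_{n+1}$ and $L_{n+1}\le\lambda_{n+1}/(1-b^{2-t})$, this yields $L_{n+1}\norm{\Phi_{n+1}(p)}<1$ for large $n$. Then \Cref{lem:outside-core} places $p$ in $\Ucal$.

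In the paper, \Cref{c2} is the special case $q_{n+1}\le tq_n$ (so $\rho_n\to0$ and $d_n\to\infty$) of \Cref{thm:first-classification}(ii). That case is proved through \Cref{prop:positive-defect} ($\sum\lambda_n<\infty$, $\alpha_n\to1$) and the plurisubharmonic separator of \Cref{thm:psh-capture}. The sketch above is the same capture mechanism without the pluripotential packaging. Without some such capture step, your construction gives at best the subdomain $\Ucal\simeq\C^2$, not $\Omega\simeq\C^2$.
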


Since properties $(i)$ and $(ii)$ remain valid for any non-autonomous system with a uniform upper contraction bound, it is natural to ask whether such a domain must either be biholomorphic to $\C^2$ or be a Short $\C^2$, or whether other possibilities can occur. 
\medskip

To understand the gap between these two results, we first rewrite their hypotheses in a common form. {From this point on, we take $0<a_n<1$ for $n\geq1$ and assume that $a_n\to0$.} If we fix $0<c<1$, then we can choose a sequence of positive real numbers $(q_n)$ satisfying $q_n\rightarrow \infty$ such that $a_n=c^{q_n}$ for all $n\geq0$.
We define  
$$\rho_n:=q_n/2^n.$$
The assumption on $(a_n)$ in Theorem \ref{short} can now be written as
\begin{equation}\label{eq:shortC2} q_{n+1}\geq 2q_n \quad\text{ or equivalently }\quad \rho_{n+1}\geq\rho_n.
\end{equation}

Note that $q_{n+1}\geq {2^{n}q_1}$  and that  the sequence $(\rho_n)$ is non-decreasing, and hence remains bounded away from $0$.  
\medskip 

For some fixed $1<t<2$, the assumption on $(a_n)$ in Theorem \ref{c2} can now be written as
\begin{equation}\label{eq:C2} q_{n+1}\leq tq_n \quad\text{ or equivalently }\quad 
    \rho_{n+1}\leq\frac{t}{2}\rho_n.
\end{equation}
Thus $q_{n+1} \leq {t^{n}q_1}$ and the sequence $(\rho_n)$ converges to $0$. 
\medskip

{These results point to $2^n$ as the natural growth scale and motivate the critical condition $q_n^{1/n}\to2$, where potentially new  complex structures may appear. Forn\ae ss and Pal \cite[Theorem~1.10]{FP} constructed sequences on this scale whose basins are Short $\C^2$ and sequences whose basins are biholomorphic to $\C^2$. }
This leads to the following question, proposed by J.~E. Forn\ae ss. 
\begin{question}\label{q:original} Let $0<c<1$.  Does there exist a sequence of real numbers $(q_n)$ satisfying  $\sqrt[n]{q_n}\rightarrow 2$, for which the non-autonomous basin associated with
$$ F_n(z,w)=(z^2+c^{q_n}w,c^{q_n}z),$$
 is neither biholomorphic to $\C^2$ nor a Short $\C^2$? 
\end{question}

{We give criteria for non-existence of bounded non-constant plurisubharmonic functions and for biholomorphism with $\C^2$, and study the normalization by $DF(n)(0)$. The condition $q_{n+1}/q_n\to2$ implies $q_n^{1/n}\to2$, but the converse need not hold. Under the quotient condition, we characterize when the basin is Short $\C^2$. We also construct a non-autonomous basin, which supports no bounded non-constant plurisubharmonic functions, and for which the Fatou--Bieberbach domain obtained from derivative normalization is a proper subset of the basin. We do not determine whether the full basin is biholomorphic to $\C^2$.}

\subsection{Statement of results}

We say that a Stein manifold $X$ is \emph{parabolic} if every bounded-above plurisubharmonic function on $X$ is constant. In particular, such manifolds are not Short $\C^2$. {For the H\'enon basins considered below, the other two defining properties of Short $\C^2$ hold by \Cref{prop:basic}. Thus parabolicity isolates the obstruction relevant to \Cref{q:original}.} 
\medskip

Let $(F_j)$ be a sequence of automorphisms of $\C^2$ satisfying 
$$a_j\|p\|\leq \|F_j(p)\|\leq b_j\|p\|, \qquad \forall p\in \mathbb{B}_R,$$
where $0<a_j<  b_j\leq b<1$ for all $j>0$.
Our first result provides sufficient conditions that prevent the non-autonomous basin of $(F_n)$ from being a Short $\C^2$.

\begin{theorem}\label{notshort} If there exists an integer $m>1$ such that 
$$b_j^m<a_j\qquad \forall j>0,$$ 
then $\Omega$ is parabolic. Moreover if $${\prod_{i=1}^{j-1}b_i\leq b_j^m}$$ for all sufficiently large $j$ then $\Omega\simeq\C^2$.
\end{theorem}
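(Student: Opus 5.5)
The idea for the first assertion is to reduce parabolicity to a one-variable convexity estimate on balls. Write $\Omega=\bigcup_n\Omega_n$ with $\Omega_n:=F(n)^{-1}(\B_R)$, and let $u$ be a plurisubharmonic function on $\Omega$ bounded above by $M:=\sup_\Omega u$. For $N>n$ put $G_{n,N}:=F_N\circ\cdots\circ F_{n+1}$ and $v_N:=u\circ F(N)^{-1}$ on $\B_R$. Since the origin is fixed, $v_N(0)=u(0)$. By the two-sided estimates, applied inductively (each image stays in $\B_R$),
$$\B_{A_{n,N}R}\subset G_{n,N}(\B_R)\subset \B_{B_{n,N}R},\qquad A_{n,N}=\prod_{i=n+1}^N a_i,\quad B_{n,N}=\prod_{i=n+1}^N b_i .$$
Here the first inclusion uses that $G_{n,N}$ is injective with $\|G_{n,N}(p)\|\ge A_{n,N}R$ on $\partial\B_R$. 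The hypothesis $b_j^m<a_j$ gives $B_{n,N}^m<A_{n,N}$, so $\log(1/A_{n,N})\le m\log(1/B_{n,N})$. Hence $\Omega_n$, viewed in the coordinates of step $N$, lies between two balls around $0$ whose log-radii are comparable with the fixed ratio $m$, and both radii tend to $0$ as $N\to\infty$.

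Next I apply convexity. Let $\varphi_N(s):=\max_{\|p\|=e^s}v_N(p)$. It is convex and nondecreasing in $s\in(-\infty,\log R)$, bounded by $M$, and tends to $u(0)$ as $s\to-\infty$ by upper semicontinuity and the sub-mean value property. By convexity, the slope of $\varphi_N$ on $[\log(A_{n,N}R),\log(B_{n,N}R)]$ is at most $(M-\varphi_N(\log(B_{n,N}R)))/\log(1/B_{n,N})$. Combining this with the comparability of log-radii, I aim to prove a contraction estimate
$$\sup_{\Omega_n}u\le u(0)+\theta\,(M-u(0))$$
with some $\theta=\theta(m)<1$ independent of $n$ (possibly after first passing from $\Omega_n$ to $\Omega_{n'}$ for suitable $n'$, to absorb the lower ball). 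Letting $n\to\infty$ gives $M\le u(0)+\theta(M-u(0))$, hence $M\le u(0)$. So $u$ attains its maximum at an interior point, and the maximum principle on the connected manifold $\Omega$ makes $u$ constant. Getting a genuinely uniform $\theta<1$ is the delicate point of this part. The naive slope bound only gives an oscillation estimate of size $(m-1)(M-u(0))$, and I expect to need the lower ball $\B_{A_{n,N}R}$ to control $\varphi_N$ from below, using $\varphi_N\ge u(0)$, and so pin down the slope.

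For the biholomorphism with $\C^2$, I would follow the normalization strategy behind \cite{Wold,BV}. The plan is to write $F_j=L_j+h_j$ with $L_j=DF_j(0)$ and $h_j=O(\|p\|^2)$, and to build polynomial conjugations $\Phi_j$ of degree $<m$ that remove the non-resonant terms up to order $m-1$. The condition $b_j^m<a_j$ is exactly what makes the terms of order $\ge m$ contract faster than the linear part, so these remainders are harmless along orbits. The extra hypothesis $\prod_{i<j}b_i\le b_j^m$ should give uniform control: the error committed at step $j$ is bounded by $\prod_{i<j}b_i$ times a constant, which is dominated by $b_j^m$. Then the compositions $\Phi_N\circ F(N)$, suitably rescaled by inverses of lower-triangular polynomial maps, should converge locally uniformly on $\Omega$ to an injective holomorphic map onto $\C^2$.

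The main obstacle is this convergence together with surjectivity onto $\C^2$. I would first try to verify a Cauchy estimate of the form $\|\Psi_{N+1}-\Psi_N\|\lesssim \prod_{i\le N}b_i^{m}/\prod_{i\le N}a_i$ on compacts, and check that the hypothesis makes these differences summable.
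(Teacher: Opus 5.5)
Both halves of your proposal have a genuine gap. For parabolicity, the uniform estimate $\sup_{\Omega_n}u\le u(0)+\theta(M-u(0))$ carries the whole argument, and your set-up cannot produce it. Your only upper bounds on the convex function $\varphi_N$ are $\varphi_N\le M$ near $\log R$, $\varphi_N(s)\to u(0)$ as $s\to-\infty$, and $\varphi_N(\log(A_{n,N}R))\le\sup_{\Omega_n}u$. Convexity between $-\infty$ and $\log R$ puts weight zero on $u(0)$. Concretely, $v_\epsilon=\max\{-1,\log\|z\|/\log(1/\epsilon)\}$ has $v_\epsilon(0)=-1$, while $\sup_{\B_\delta}v_\epsilon\to0$ as $\epsilon\to0$ for fixed $\delta$; moreover $u(0)$ may equal $-\infty$. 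Pairing the inner and outer balls of the same $\Omega_n$ is circular: it bounds $\varphi_N(\log(B_{n,N}R))-\sup_{\Omega_n}u$, not $\sup_{\Omega_n}u$ itself. What is missing is a fixed reference set of positive size. Take $\overline{\B}_r$ with $r<R$. At time $N$ its image contains $\overline{\B}_{rA_N}$, while $F(N)(\Omega_n)\subset\B_{B_{n,N}R}$. Convexity of $\varphi_N$ on $[\log(rA_N),\log R]$ then gives
\[
M-\sup_{\Omega_n}u\;\ge\;t_N\Bigl(M-\sup_{\overline{\B}_r}u\Bigr),\qquad
t_N=\frac{\log(1/B_N)-\log(1/B_n)}{\log(R/r)+\log(1/A_N)} .
\]
Since $\log(1/A_N)\le m\log(1/B_N)$ and $B_N\to0$, we get $\liminf_N t_N\ge 1/m$. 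Letting $n\to\infty$ then yields $\sup_{\overline{\B}_r}u=M$; this supremum is attained by upper semicontinuity, and the maximum principle finishes. This is exactly the paper's argument (\Cref{thm:cumulative-parabolic}). There it is phrased through the relative extremal function of $\overline{\B}_r$ and the bound $\|F(n)(p)\|\le C_pB_n\le(rA_n)^{1/(2m)}$ for each fixed $p\in\Omega$. The quantity that gets pinned down is $\sup_{\overline{\B}_r}u$, not $u(0)$.

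For $\Omega\simeq\C^2$, your normal-form plan is a heuristic, not a proof. You do not construct or bound the conjugations, and with $\liminf a_j=0$ allowed there is no evident uniform control of them. The Cauchy bound you intend to check is not summable under the hypotheses. For $b_j\equiv b$ and $a_j=b^m(1+\epsilon_j)$ with $\sum\epsilon_j<\infty$, both hypotheses hold, yet $\prod_{i\le N}b_i^m/a_i\to\prod_i(1+\epsilon_i)^{-1}>0$. Surjectivity onto $\C^2$, which you leave open, is exactly the content of Bedford's conjecture, so this route amounts to reproving \cite{BV} in a harder, non-uniform setting. The missing idea is to reduce to \cite{BV}. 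The paper rescales by $r_n=\varepsilon B_n^{\theta}$ with $\theta=\frac{m-1}{m}$, so that the two hypotheses combine into
\[
\frac{b_j}{a_j}\,r_{j-1}<\varepsilon\, b_j^{1-m}B_{j-1}^{\theta}\le\varepsilon .
\]
Hence the rescaled map factors as $\widehat F_j=R_j\circ V_j$, where $V_j=\tau\,D\widehat F_j(0)^{-1}\widehat F_j$ is uniformly attracting and $R_j$ is linear with $\sigma_j^+\le\rho<1$ and $(\sigma_j^+)^K<\sigma_j^-$. Splitting $R_j$ through its polar decomposition into $N_j$ linear factors with singular values in $(\rho^{2K},\rho]$ gives a uniformly attracting sequence with the same basin, and \cite[Theorem~1.1]{BV} applies (\Cref{thm:general-Ck}). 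This is where your intuition that the step-$j$ error is $\lesssim\prod_{i<j}b_i\le b_j^m$ is actually used.
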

In contrast to the uniformly attracting case, the theorem allows sequences for which  $\liminf a_j= 0$. The proof of the first statement relies on potential theory, in particular on estimates for relative extremal functions, whereas the additional assumption in the second statement enables us to translate the problem to the uniformly attracting case. {Parabolic Stein manifolds and relative extremal functions were studied in \cite{Zahariuta,AS}; see in particular \cite[Definition~1 and Proposition~1]{AS}.}

The parabolicity assertion is a consequence of the stronger cumulative criterion in \Cref{thm:cumulative-parabolic}: with $A_n=\prod_{j=1}^n a_j$ and $B_n=\prod_{j=1}^n b_j$, it suffices that $B_n^m\le A_n$ along an unbounded set of indices.

\begin{remark}
The notion of parabolicity is defined for Stein manifolds, so we first verify that $\Omega$ is Stein. If $F_n$ are global automorphisms of $\C^2$ satisfying $\|F_n(p)\|\leq b\|p\|$ on $\B_R$ for some uniform $0<b<1$, then $\Omega=\bigcup_{n\geq0}F(n)^{-1}(\mathbb{B}_R)$. The sets $F(n)^{-1}(\mathbb{B}_R)$ are Runge in $\C^2$ and exhaust $\Omega$. It follows that $\Omega$ is Stein.
\end{remark}

\medskip

{We now turn to our H\'enon family with positive coefficients}
\begin{equation}\label{family2}
    F_n(z,w)=(z^2+a_nw,a_nz),
\end{equation}
{Here $a_n=c^{q_n}$, $0<c<1$, and the positive real numbers $q_n$ tend to infinity. These assumptions hold throughout our analysis of this family. We use}
$$
\rho_n:=\frac{q_n}{2^n},
\qquad
d_n:=2q_n-q_{n+1}.
$$
The quantity $\rho_n$ measures the growth of $q_n$ relative to
the scale $2^n$, while $d_n$ measures the deviation from the
relation $q_{n+1}=2q_n$. We begin with two simple criteria.

\begin{samepage}
\begin{theorem}\label{thm:first-classification} 
The following statements hold.
\begin{enumerate}[label=\textup{(\roman*)}]
\item If $\liminf \rho_n>0$, then $\Omega$ is a Short $\C^2$.
\item If $\liminf \rho_n=0$ and $\liminf d_n>0$, then $\Omega$ is
biholomorphic to $\C^2$.
\end{enumerate}
\end{theorem}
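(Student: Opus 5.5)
For (i) I will follow Forn\ae ss's argument behind Theorem~\ref{short}. Properties (i) and (ii) of a Short $\C^2$ hold for any basin with a uniform upper contraction bound (this is \Cref{prop:basic}), so the work is to build a nonconstant bounded plurisubharmonic function on $\Omega$. The natural candidate is the non-autonomous escape-rate function
$$G(p)=\lim_{n\to\infty}\frac{1}{2^n}\log\|F(n)(p)\|, \qquad\text{or its regularization}\quad \max\Bigl(\frac{1}{2^n}\log\|F(n)(p)\|,\,-L\Bigr).$$
Two facts are needed. First, $G<0$ on $\Omega$ and $G\le -\liminf\rho_n\cdot\log(1/c)+o(1)$ near the origin. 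Second, $G$ is not identically constant. To get the first, I will estimate one step: on a small ball $\|F_n(p)\|\lesssim \max(|z|^2,\,c^{q_n}\|p\|)$. The $c^{q_n}$ term contributes $\log$-size $-q_n\log(1/c)$ at step $n$, and after dividing by $2^n$ this becomes $-\rho_n\log(1/c)$. If $\liminf\rho_n>0$, these contributions stay bounded away from zero in the limit, so orbits of points near $0$ have $\frac{1}{2^n}\log\|F(n)\|\le -\delta$ for a fixed $\delta>0$. For points with $|z|$ of order one, the quadratic term instead gives a value close to $0$. Hence $G$ is bounded above by $0$, bounded below away from $-\infty$ after truncation, and nonconstant. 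Plurisubharmonicity follows because it is a decreasing (or upper-regularized) limit of psh functions $2^{-n}\log\|F(n)\|$ with uniform control of the error terms. Combined with \Cref{prop:basic}, this gives the Short $\C^2$ conclusion. The delicate step is the uniform comparison of consecutive terms, i.e.\ showing $|2^{-(n+1)}\log\|F(n+1)\|-2^{-n}\log\|F(n)\||$ is summable on compacts of $\Omega$, where the cross terms $c^{q_n}w$ must be absorbed.

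For (ii) I will reduce to \Cref{notshort}. On a small ball of radius $R$, I expect $F_n$ to satisfy $a_j\|p\|\le\|F_j(p)\|\le b_j\|p\|$ with $a_j\approx c^{q_j}$ and $b_j\approx c^{q_j}+R$. These are not comparable, so I first replace the sequence by conjugates, rescaling $\phi_n(z,w)=(\lambda_n z,\mu_n w)$. This is the key step. I will choose $\lambda_n,\mu_n$ so that in the conjugated maps $\tilde F_n=\phi_{n}\circ F_n\circ\phi_{n-1}^{-1}$ the linear part has entries of comparable size, and the quadratic term $z^2$ gets coefficient $\lambda_n/\lambda_{n-1}^2$. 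Taking $\lambda_n=c^{\,s_n}$ with $s_n$ solving a recursion $s_n\approx 2s_{n-1}-\text{(something)}$ naturally produces $d_n=2q_n-q_{n+1}$ as the exponent governing the quadratic coefficient. The condition $\liminf d_n>0$ will then make the quadratic term uniformly small on a fixed ball, so $\tilde F_n$ becomes a uniformly attracting sequence, or at least satisfies $b_j^m<a_j$ together with the product condition of \Cref{notshort}. The conjugation changes the basin only by a biholomorphism, provided $\phi_0$ is fixed and the $\phi_n$ are chosen so that $\Omega$ equals the basin of the conjugated sequence. For that I must check that $\tilde F(n)=\phi_n\circ F(n)$ tends to $0$ exactly when $F(n)$ does, which requires $\lambda_n,\mu_n$ not to blow up too fast.

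The condition $\liminf\rho_n=0$ should be used to guarantee this compatibility. It lets me pick scalings along which $\lambda_n$ and $\mu_n$ do not tend to infinity faster than the contraction, so the basin is unchanged. I expect the main obstacle to be choosing the exponents $s_n$ consistently under only $\liminf$ hypotheses rather than for all $n$. If direct conjugation fails, the fallback is to apply the Bera--Verma theorem for uniformly attracting basins to a subsequence of compositions $F(n_{k+1})\circ F(n_k)^{-1}$, with $n_k$ chosen where $\rho_{n_k}$ is small.
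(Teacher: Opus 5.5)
Your part (i) is the paper's argument (\Cref{thm:short}): a truncated escape rate. Make the truncation explicit as $M_n=\max\{\|F(n)\|_\infty,\beta_n\}$, where $\beta_n=c^{\kappa 2^n}$ and $0<\kappa<2\liminf\rho_n$. Then $a_{n+1}\le\beta_n$ and $\beta_{n+1}=\beta_n^2$. The estimate you need is the one-sided $M_{n+1}\le 2M_n^2$, which makes $2^{-n}\log M_n+\sum_{j>n}2^{-j}\log 2$ decreasing. The two-sided summable comparison you propose fails near the curves $z_n^2+a_{n+1}w_n=0$, and it is not needed. For nonconstancy, compare the origin with the points $F(N)^{-1}(r_0,0)$. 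Points with $|z|\approx 1$ at time $0$, such as $(1,0)$, are not in $\Omega$.

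Part (ii) has a genuine gap at the step you call compatibility, and no choice of scalings closes it. Write the scalings as $\phi_n(z,w)=(x_nz,y_nw)$, to avoid a clash with the paper's $\lambda_n=P_n/a_{n+1}$. The linear part of $\phi_n\circ F_n\circ\phi_{n-1}^{-1}$ has entries $a_nx_n/y_{n-1}$ and $a_ny_n/x_{n-1}$. Keeping these in a fixed interval $[C,D]\subset(0,1)$ forces $x_nP_n$ and $y_nP_n$ to lie between constant multiples of $C^n$ and $D^n$. So the scalings must blow up like $P_n^{-1}$, whatever $\rho_n$ does.

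Take the natural choice $x_n=y_n=t^n/P_n$. This gives $\widetilde F_n(z,w)=(tw+t^{2-n}\lambda_{n-1}z^2,\,tz)$. If $d_n\ge\delta>0$ eventually, the quadratic coefficient stays bounded for $c^\delta\le t<1$, so this sequence really is uniformly attracting; that part of your computation is right. But $\phi_n\circ F(n)=t^nS^n\Phi_n$, so the conjugated basin is $\{p: t^n\Phi_n(p)\to 0\}$. This set contains $\Ucal$. It contains no point of $\Omega\setminus\Ucal$: for such $p$, \Cref{lem:outside-core} gives $t^n\|\Phi_n(p)\|\ge t^n/L_n\gtrsim (t/c^{\delta})^n$, which does not tend to $0$. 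Hence Bera--Verma only re-proves that the Fatou--Bieberbach domain $\Ucal$ is $\C^2$, which is \Cref{thm:summable-core}.

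The theorem needs $\Omega=\Ucal$, and your plan contains no argument for it. This is not bookkeeping: \Cref{thm:proper-core-example} gives $\Ucal\subsetneq\Omega$ with $\rho_n\to0$. In the paper this step is \Cref{thm:psh-capture}, built from three ingredients:
\begin{itemize}
\item \Cref{lem:reset}, which is where $\liminf\rho_n=0$ actually enters: on each compact $K\Subset\Omega$ there are arbitrarily large $n$ with $\sup_K\|\Phi_n\|\le 1/\lambda_n$;
\item the lower bound $\|\Phi_n\|\ge 1/L_n$ off $\Ucal$;
\item $\alpha_n\to1$, which follows from $\liminf d_n>0$ (\Cref{prop:positive-defect}).
\end{itemize}
Together these give the plurisubharmonic $G$ of \Cref{lem:psh-separator}, with $G=0$ on $\Ucal$ and $G\ge\liminf\alpha_n$ off $\Ucal$. \Cref{lem:psh-gap} then forces $\Omega=\Ucal$.

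Your fallback fails as well. The blocks $F_{n_{k+1}}\circ\cdots\circ F_{n_k+1}$ have derivative at $0$ of size $\prod_{n_k<j\le n_{k+1}}a_j\to0$, so they violate the uniform lower bound and are not uniformly attracting. Rescaling them to fix this brings back the same problem.
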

\end{samepage}
{The first statement is a consequence of the general construction of Short $\C^2$ domains of Forn\ae ss \cite[Theorem~1.4]{Fshort}; see also the discussion preceding Theorem~1.7 in \cite{FP}. Indeed, $\liminf\rho_n>0$ gives $a_n\le b^{2^n}$ eventually for some $0<b<1$. We include a direct proof adapted to the present family. The second statement extends the earlier $\C^2$ criterion \eqref{eq:C2} to the critical case. For example, it applies to $q_n=2^n/n$.}
\medskip

We also provide a sufficient condition on the family of maps \eqref{family2} that prevents $\Omega$ from being a Short $\C^2$. 

\begin{theorem}\label{thm:second-classification} If $\liminf \rho_n=0$ and if there exists an integer $m>1$ {such that} 
\begin{equation}\label{eq:henon-parabolic}
 {\prod_{j=1}^{n-1}a_j\geq a_n^m}   
\end{equation}
for all sufficiently large $n$, then  $\Omega$ is parabolic. In particular, $\Omega$ is not a Short $\C^2$.
\end{theorem}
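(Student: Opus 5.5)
Our plan is to reduce \Cref{thm:second-classification} to the cumulative parabolicity criterion \Cref{thm:cumulative-parabolic}: we need a ball $\B_R$ and constants $a_j<b_j$ with $a_j\|p\|\le\|F_j(p)\|\le b_j\|p\|$ on $\B_R$, and we need $B_n^m\le A_n$ along an unbounded set of indices $n$, where $A_n=\prod_{j\le n}a_j$ and $B_n=\prod_{j\le n}b_j$.

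\emph{Step 1: two-sided local bounds.} Consider $F_n(z,w)=(z^2+a_nw,a_nz)$ on $\B_R$. A pure scalar bound $\|F_n(p)\|\asymp a_n\|p\|$ fails near the $z$-axis, where $z^2$ dominates when $|z|\gg a_n$. We therefore first change coordinates by conjugating $F_n$ with a scaling that depends on $n$, or else we group the maps in pairs. For the pairs, note that
\[
F_{n+1}\circ F_n(z,w)=\bigl((z^2+a_nw)^2+a_{n+1}a_nz,\;a_{n+1}(z^2+a_nw)\bigr).
\]
The more robust option is to work on shrinking balls $\B_{r_n}$ with $r_n\lesssim a_n$, using the fact that $\Omega=\bigcup_n F(n)^{-1}(\B_{r_n})$ when the radii are compatible. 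On $\B_{r}$ with $r\le a_n$ we have $\|F_n(p)\|\le a_n\|p\|+|z|^2\le 2a_n\|p\|$. For the lower bound, $|a_nz|$ controls $z$, and $|z^2+a_nw|\ge a_n|w|-|z|^2$, which gives $\|F_n(p)\|\ge c_0 a_n\|p\|$ once $r\le \epsilon a_n$. So we set $a_j':=c_0a_j$ and $b_j':=2a_j$, with ratio $b_j'/a_j'$ bounded independently of $j$. The condition $\liminf\rho_n=0$ is used to ensure that the image radii are compatible, i.e.\ $F_n(\B_{\epsilon a_n})\subset\B_{\epsilon a_{n+1}}$ along a suitable subsequence. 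Concretely we need $2\epsilon a_n^2\le \epsilon a_{n+1}$, i.e.\ $q_{n+1}\le 2q_n-O(1)$, at infinitely many indices. That holds at every $n$ where $\rho_{n+1}<\rho_n$ with some margin, and $\liminf\rho_n=0$ with $\rho_n>0$ forces infinitely many such strict decreases. We then prove a variant of \Cref{thm:cumulative-parabolic} with time-dependent radii: its relative-extremal-function argument only uses the inclusions $F(n)^{-1}(\B_{r_n})$ and the cumulative products, so we re-run it along that subsequence.

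\emph{Step 2: the cumulative inequality.} With $b_j'=2a_j$ and $a_j'=c_0a_j$, the condition $B_n^m\le A_n$ reads $2^{mn}\prod_{j\le n}a_j^{m}\le c_0^n\prod_{j\le n}a_j$, i.e.\ $\prod_{j\le n}a_j^{m-1}\le (c_0/2^m)^n$. Now $\prod_{j\le n}a_j=c^{\sum q_j}$ and $\sum q_j\gg n$ because $q_j\to\infty$, so this holds for all large $n$. Hypothesis \eqref{eq:henon-parabolic} enters in matching the radii: it guarantees that the ball $\B_{\epsilon a_n}$ at time $n$ pulls back to sets comparable with $F(n-1)^{-1}$ of a fixed ball. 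Since $a_n^m\le\prod_{j<n}a_j$, the pulled-back sets do not shrink too fast relative to the extremal-function estimates, and this is exactly what the potential-theoretic argument needs in order to show that every bounded-above psh function is constant on each $F(n)^{-1}(\B_{r_n})$ in the limit.

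\emph{Main obstacle.} The hard step is Step 1 together with adapting \Cref{thm:cumulative-parabolic} to radii that depend on $n$. The quadratic term ruins uniform linear bounds on a fixed ball, and the proof must show that the relative extremal function estimate survives when $R$ is replaced by $r_n\sim a_n$. Our first attempt will be to conjugate by the dilations $D_n(p)=p/(\epsilon a_n)$. This turns $F_n$ into maps on the unit ball with bounds $c_0a_n\le\cdot\le 2a_n$ up to the ratio $a_{n+1}/a_n$. We will then absorb that ratio into $a_j',b_j'$ and use \eqref{eq:henon-parabolic} to verify $B_n^m\le A_n$ for the conjugated constants.
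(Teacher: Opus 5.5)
Your reduction to \Cref{thm:cumulative-parabolic} does not go through. The bounds $c_0a_j\|p\|\le\|F_j(p)\|\le 2a_j\|p\|$ hold only on the shrinking balls $\B_{\epsilon a_j}$. The two cumulative estimates that \Cref{thm:cumulative-parabolic} actually uses are $\overline{\B}_{rA_n}\subset F(n)(\overline{\B}_r)$ and $\|F(n)(p)\|\le C_pB_n$. They need the backward orbit of every $y\in\B_{rA_n}$, respectively the forward orbit of $p$, to stay in that linear regime at \emph{every} step, not just along a subsequence.

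For forward orbits this means $F_j(\B_{\epsilon a_j})\subset\B_{\epsilon a_{j+1}}$ for all large $j$, i.e.\ $d_j\ge\log 2/\mu$ eventually (with $\mu=\log(1/c)$). That is essentially the hypothesis of \Cref{thm:first-classification}(ii). Compatibility at infinitely many isolated indices does not chain. Your dilation conjugation meets the same wall: $\frac{1}{\epsilon a_{n+1}}F_n(\epsilon a_n z,\epsilon a_n w)=c^{d_n}(\epsilon z^2+w,z)$, and $c^{d_n}\ge 1$ whenever $q_{n+1}\ge 2q_n$. So this factor cannot be absorbed into constants $b_j'\le b<1$.

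The inner-radius inclusion is in fact false in general. Write $F(n)=P_nS^n\Phi_n$ as in \Cref{sec:normalization}. For the real point $x=(-t,-t)$, every inverse shear $\sigma_i^{-1}$ keeps both coordinates real and nonpositive and only increases their absolute values. Hence $\|\Phi_n^{-1}(x)\|\ge\lambda_j t^2$ for each $j<n$, and the inner radius of $F(n)(\B_r)$ is at most $P_n\bigl(2r/\max_{j<n}\lambda_j\bigr)^{1/2}$. Under your hypotheses $\lambda_j=c^{Q_j-q_{j+1}}$ can be enormous. For example, take $q_{j+1}/q_j\ge 3/2$ throughout, with a jump $q_{j+1}=5q_j$ once every five steps. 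This gives $\rho_n\to0$ and $Q_n\le 2q_{n+1}$, and $\lambda_j\ge c^{-2q_j}$ at the jumps. So the inner radius is far below $rc_0^nP_n$ for all large $n$. A symptom of the problem is that your inequality $B_n^m\le A_n$ holds for all large $n$ without any use of \eqref{eq:henon-parabolic}. That hypothesis then enters only through an unspecified ``matching of radii''.

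The paper avoids cumulative products altogether. It feeds the two inputs of \Cref{lem:extremal-visibility} separately, with a fixed compact set $\overline{\B}_r$ and a fixed outer ball (after the dilation of \Cref{rem:rescale}).

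\begin{enumerate}
\item The inner radius comes from \Cref{lem:inner-radius}. The shear recursion $1/u_j\ge 1/u_{j+1}-\lambda_j$ gives $\B_{\beta_n}\subset F(n)(\B_r)$ with $\beta_n\ge e^{-2\mu Q_n}/(n+r^{-1})$. This bound survives huge $\lambda_j$ because $\lambda_j\le e^{\mu Q_n}$ for $j<n$.
\item The smallness of the orbit comes from \Cref{lem:reset}, which is the only place $\liminf\rho_n=0$ is used. Each $p\in\Omega$ satisfies $\|F(n)(p)\|\le a_{n+1}$ for arbitrarily large $n$.
\item Finally, \eqref{eq:henon-parabolic}, rewritten as $Q_n\le mq_{n+1}$, gives $a_{n+1}^{\ell}=e^{-\ell\mu q_{n+1}}<\beta_n$ for any integer $\ell>2m$ and all large $n$. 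So $\|F(n)(p)\|\le\beta_n^{1/\ell}$ at those reset times. \Cref{lem:extremal-visibility} with $\eta=1/\ell$, followed by \Cref{lem:negative-extremal-parabolic}, then yields parabolicity.
\end{enumerate}

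The two ideas missing from your plan are this nonlinear inner-radius estimate and the use of reset times instead of compatibility of radii.
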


{When $q_{n+1}/q_n\to2$, the product condition \eqref{eq:henon-parabolic} is automatic} by \Cref{lem:Q-ratio}. Combining the preceding criteria gives the following characterization, proved in \Cref{cor:regular-short}.
\begin{corollary}\label{cor:intro-regular}
Assume $q_{n+1}/q_n\to2$. Then $\Omega$ is a Short $\C^2$ if and only if $\liminf\rho_n>0$. {In particular, if $\liminf\rho_n=0$, then $\Omega$ is parabolic.}
\end{corollary}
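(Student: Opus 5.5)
The corollary follows by combining \Cref{thm:first-classification}(i), \Cref{thm:second-classification} and \Cref{lem:Q-ratio}; I split according to whether $\liminf\rho_n$ is positive or zero.

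\emph{The case $\liminf\rho_n>0$.} Here \Cref{thm:first-classification}(i) applies verbatim and gives that $\Omega$ is a Short $\C^2$. This case does not use the assumption $q_{n+1}/q_n\to2$.

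\emph{The case $\liminf\rho_n=0$.} I want to apply \Cref{thm:second-classification}, so I must check the product condition \eqref{eq:henon-parabolic}. This is the content of \Cref{lem:Q-ratio}, which I take as given. For the reader, the computation it encodes is as follows. Since $a_j=c^{q_j}$ with $0<c<1$, the inequality $\prod_{j=1}^{n-1}a_j\ge a_n^m$ is equivalent to
\[
\sum_{j=1}^{n-1}q_j\le m\,q_n .
\]
Fix $\varepsilon>0$ with $2-\varepsilon>1$. The hypothesis $q_{j+1}/q_j\to2$ gives an index $N$ such that $q_{j+1}\ge(2-\varepsilon)q_j$ for all $j\ge N$. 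Hence $q_j\le(2-\varepsilon)^{-(n-j)}q_n$ for $N\le j\le n$, and the tail of the sum satisfies
\[
\sum_{j=N}^{n-1}q_j\le \frac{q_n}{1-\varepsilon}.
\]
The finitely many remaining terms sum to a constant, which is at most $q_n$ for large $n$ because $q_n\to\infty$. So any integer $m\ge 1+1/(1-\varepsilon)$ works, for instance $m=3$ with $\varepsilon=1/2$. \Cref{thm:second-classification} then shows that $\Omega$ is parabolic.

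It remains to see that a parabolic $\Omega$ is not a Short $\C^2$. By definition, a Short $\C^2$ carries a nonconstant bounded plurisubharmonic function. Parabolicity forbids this, since every bounded-above plurisubharmonic function on $\Omega$ is constant.

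\emph{Conclusion.} The two cases together give the equivalence: $\Omega$ is a Short $\C^2$ if and only if $\liminf\rho_n>0$. They also give the addendum that $\liminf\rho_n=0$ forces $\Omega$ to be parabolic. The only substantive point is the tail estimate for $\sum_j q_j$, and it is routine once $q_{j+1}/q_j$ is eventually bounded below by a constant greater than $1$.
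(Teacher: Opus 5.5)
Your proposal is correct and takes the same approach as the paper's proof of \Cref{cor:regular-short}. You use \Cref{thm:short} when $\liminf\rho_n>0$; when $\liminf\rho_n=0$, you use \Cref{lem:Q-ratio} to verify \eqref{eq:henon-parabolic}, and then \Cref{thm:second-classification} gives parabolicity, which rules out a nonconstant bounded plurisubharmonic function (your geometric-tail recomputation is right up to an index shift).
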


{Since $\Omega$ is Stein, \cite[Theorem~1]{AS} also implies that in the parabolic case $\mathcal O(\Omega)$ and $\mathcal O(\C^2)$ are isomorphic as Fr\'echet spaces. However, this does not determine whether the manifolds are biholomorphic.} 
\medskip

We normalize the compositions $F(n)$ of the H\'enon maps  by their derivative at the origin and set
$$
\Phi_n:=(DF(n)(0))^{-1}\circ F(n).
$$
Derivative normalization of the orbit is typically used in the uniformly attracting setting. In particular it provides a biholomorphism between the $\C^2$ and the region of the uniform convergence of such normalizations.

The maps $\Phi_n$ satisfy an exact recurrence in which the successive factors
alternate between quadratic shears
$$\sigma_{2k}(z,w)=(z,w+\lambda_{2k}z^2),\qquad \sigma_{2k+1}(z,w)=(z+\lambda_{2k+1}w^2,w)$$
where
$$
{\lambda_n:=\frac{a_1\cdots a_n}{a_{n+1}},\qquad n\ge0,\quad\text{with }\lambda_0:=\frac{1}{a_{1}}.}
$$

Thus the nonlinear part of the normalized dynamics is encoded
by the sequence $(\lambda_n)$. The convergence or divergence of
\begin{equation}\label{eq:sum}
\sum_{n=0}^{\infty}\lambda_n
\end{equation}
provides a first natural division in our analysis. If this series converges,
then the inverse maps $\Phi_n^{-1}$ converge locally uniformly on $\C^2$ to
an injective entire map $\Psi:\C^2\longrightarrow\Omega$, whose image $\mathcal U:=\Psi(\C^2)$
is a Fatou--Bieberbach subdomain of $\Omega$. The main question in the
summable case is therefore whether $\mathcal U$ coincides with the whole
basin $\Omega$.
\medskip  

We now assume that the sum \eqref{eq:sum}  converges and set
$$
L_n:=\sum_{j=n}^{\infty}\lambda_j.
$$
For all sufficiently large $n$ we have $0<\lambda_n<L_n<1$, and we define
$$
\alpha_n:=
\frac{\log(1/L_n)}{\log(1/\lambda_n)}< 1.
$$

\begin{theorem}\label{thm:psh-capture}
{Assume $\sum_n\lambda_n<\infty$ and $\liminf\rho_n=0$. If $\liminf\alpha_n>0$,} then $\Omega=\mathcal U$ and hence $\Omega$ is biholomorphic to $\C^2$.
\end{theorem}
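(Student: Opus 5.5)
We will show that every point $p\in\Omega$ lies in $\mathcal U$. It suffices to show that for each $p\in\Omega$ the normalized orbit $\Phi_n(p)$ stays bounded as $n\to\infty$. Granting this, the local uniform convergence $\Phi_n^{-1}\to\Psi$ on $\C^2$ passes to a subsequence $\Phi_{n_k}(p)\to x$, and the standard argument (Hurwitz plus injectivity of $\Psi$) gives $\Psi(x)=p$. Since $\Omega=\bigcup_n F(n)^{-1}(\B_R)$, boundedness of $\Phi_n$ on each $F(N)^{-1}(\B_R)$ is what we need.

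\textbf{Step 1: the exact recurrence.} Write $\Phi_{n+1}=\tilde\sigma_{n}\circ\Phi_n$, where $\tilde\sigma_n$ is the conjugate of $F_{n+1}$ by the linear parts. Up to the coordinate swap, this is the shear $\sigma_n$ with coefficient $\lambda_n$. Thus along an orbit the coordinates $(x_n,y_n)=\Phi_n(p)$ evolve by adding $\lambda_n$ times the square of one coordinate to the other.

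\textbf{Step 2: a weighted escape estimate.} Introduce $M_n=\max(|x_n|,|y_n|)$. The shears give
\[
M_{n+1}\le M_n+\lambda_n M_n^2 .
\]
If $M_N\le L_N^{-1}/2$ for some $N$, then comparison with the Riccati ODE $M'=\lambda M^2$ yields
\[
\frac1{M_n}\ge \frac1{M_N}-\sum_{j=N}^{n-1}\lambda_j\ge \frac1{M_N}-L_N>0,
\]
so $M_n$ stays bounded. Hence it suffices to find, for each $p\in\Omega$, some index $N$ with
\[
M_N(p)\le \tfrac12 L_N^{-1}.
\]

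\textbf{Step 3: controlling $M_N$ from the dynamics.} For $p$ with $F(n_0)(p)\in\B_R$, the true orbit satisfies $\|F(n)(p)\|\lesssim b^{\,n-n_0}$ for $n\ge n_0$. On the other hand, $DF(n)(0)^{-1}$ has norm of order $(a_1\cdots a_n)^{-1}$, using the structure $DF_j(0)=\begin{pmatrix}0&a_j\\a_j&0\end{pmatrix}$. The naive bound $M_n\lesssim b^{n}/(a_1\cdots a_n)$ is too weak, so a finer estimate is needed. After the first coordinate has been squared, the orbit entries scale like $a_{n}$ times the previous entry. More precisely, $F(n)(p)=(z_n,w_n)$ with $w_n=a_nz_{n-1}$, which gives
\[
|y_n|\approx \frac{|z_{n-1}|}{a_1\cdots a_{n-1}},
\]
and $z_n=z_{n-1}^2+a_na_{n-1}z_{n-2}$.

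The plan is to use $\liminf\rho_n=0$ to pick indices $N$ where $q_N$ is small relative to $2^N$. At such indices the quadratic term dominates, so $|z_{N}|$ is roughly $|z_{n_0}|^{2^{N-n_0}}$, i.e. doubly-exponentially small. This should give
\[
\log M_N\le -\epsilon\log(1/\lambda_N)
\]
along that subsequence. Combined with $\log(1/L_N)=\alpha_N\log(1/\lambda_N)$ and $\liminf\alpha_n>0$, this would yield $M_N\le \frac12 L_N^{-1}$.

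The exponent comparison between $|z_N|\sim \delta^{2^N}$ and $\lambda_N=c^{\,q_1+\dots+q_N-q_{N+1}}$ is the main obstacle. One must show that $2^N\log(1/\delta)$ beats $(1-\alpha_N)\log(1/\lambda_N)$, which is where $\rho_N\to0$ enters through $\log(1/\lambda_N)\lesssim q_N$-type sums. I would first try to handle it by bounding $\log(1/\lambda_N)$ by $C\,q_N\log(1/c)$, and then choosing $N$ along the subsequence where $\rho_N$ is small enough that
\[
C\rho_N\log(1/c)<\log(1/\delta).
\]
If the constant $\delta$ is not uniform in $p$, I would enlarge $n_0$ first so that $\delta$ is as small as needed. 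Finally, conclude $\Omega=\mathcal U=\Psi(\C^2)\simeq\C^2$.
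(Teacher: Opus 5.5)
There is a genuine gap, and it is in Step~3, which carries the entire proof.

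Your Step~2 reduction is correct, but it is exactly \Cref{lem:outside-core}. Since $\Phi_N(p)$ is bounded for $p\in\mathcal U$ and $L_N\to0$, the existence of an $N$ with $L_N\norm{\Phi_N(p)}<1$ is \emph{equivalent} to $p\in\mathcal U$. So nothing has been reduced yet.

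The intermediate estimate you aim for, $\log M_N\le-\epsilon\log(1/\lambda_N)$, is false. It forces $\Phi_N(p)\to0$ along a subsequence, whereas $\Phi_N(p)\to\Psi^{-1}(p)\neq0$ for every $p\in\mathcal U\setminus\{0\}$. Note also that with such a bound the comparison with $L_N$ would be trivial, so $\liminf\alpha_n>0$ would never be used; that is a warning sign.

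The correct target is $\norm{\Phi_N(p)}<1/L_N$, but the doubly exponential mechanism cannot reach it. Since $\norm{F(n)(p)}=P_n\norm{\Phi_n(p)}$ and $P_n=\lambda_na_{n+1}$, the recursion $\norm{F(n+1)(p)}\le2\norm{F(n)(p)}^2$ is available only while $\norm{F(n)(p)}>a_{n+1}$, i.e.\ while $\norm{\Phi_n(p)}>1/\lambda_n>1/L_n$. Moreover, the argument of \Cref{lem:reset} shows the orbit must leave this regime before reaching any index $N$ at which $\rho_N$ is small compared with $2^{-n_0}$. So at exactly the indices you want to use, the approximation $|z_N|\approx|z_{n_0}|^{2^{N-n_0}}$ fails.

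All this mechanism can deliver is the reset bound $\norm{\Phi_n(p)}\le1/\lambda_n$ for arbitrarily large $n$. That misses the required bound by the factor $\lambda_n/L_n=\lambda_n^{1-\alpha_n}$. Once the orbit is in the shear regime, its size is governed by $\Phi_{n+1}=\sigma_n\circ\Phi_n$, about which quadratic decay says nothing. Indeed \Cref{thm:proper-core-example} shows orbits in $\Omega$ can stay at the critical scale $\norm{\Phi_n}\asymp1/L_n$ forever. Separately, your bound $\log(1/\lambda_N)\le Cq_N\log(1/c)$ is unjustified: $\log(1/\lambda_N)=\mu(Q_N-q_{N+1})$, and $Q_N$ need not be $O(q_N)$ under the hypotheses.

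The missing idea is to replace the pointwise estimate by plurisubharmonicity. The paper chooses reset times $n_j$ uniformly on a compact exhaustion $(K_j)$ of $\Omega$, via \Cref{lem:reset}. Then the functions $g_j:=\log^+\norm{\Phi_{n_j}}/\log(1/\lambda_{n_j})$ satisfy $0\le g_j\le1$ on $K_j$. The function $G:=(\limsup_j g_j)^*$ is plurisubharmonic on $\Omega$. It vanishes on $\mathcal U$, because $\Phi_n$ converges locally uniformly there. It satisfies $G\ge\liminf\alpha_n>0$ on $\Omega\setminus\mathcal U$ by \Cref{lem:outside-core}.

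A plurisubharmonic function cannot be constant on a nonempty open set and jump to a strictly larger value on the complement. The sub-mean value inequality at a point of $\partial\mathcal U\cap\Omega$ gives a contradiction (\Cref{lem:psh-gap}), so $\Omega=\mathcal U$.

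Thus the only pointwise input the paper uses is the reset bound $\norm{\Phi_{n_j}}\le1/\lambda_{n_j}$. The gap between $1/\lambda_n$ and $1/L_n$ is bridged by potential theory. This is precisely where $\liminf\alpha_n>0$ enters, as the size of the jump of $G$.
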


\begin{remark} The assumption $\liminf d_n>0$ in part $(ii)$ of Theorem \ref{thm:first-classification} implies that \eqref{eq:sum} is summable and that $\alpha_n\rightarrow 1$. Hence, part $(ii)$ will follow from Theorem \ref{thm:psh-capture}. 
\end{remark}

\medskip

The stronger capture criterion in \Cref{thm:alpha-capture} uses lower bounds for $\alpha_n$ only over suitable intervals. The following example, proved in \Cref{thm:proper-core-example}, shows that $\Ucal$ can be a proper subset of $\Omega$ {even when $q_{n+1}/q_n\to2$}.
\begin{theorem}\label{thm:intro-proper}
For every $0<c<1$, there is a positive sequence $(q_n)$ such that
$$
q_n^{1/n}\to2,\qquad q_{n+1}/q_n\to2,\qquad \rho_n\to0,
$$
and
$$
\sum_n\lambda_n<\infty,\qquad \alpha_n\to0,
$$
for which $\Omega$ is parabolic and $\Ucal\subsetneq\Omega$. Moreover, $\Omega\setminus\Ucal$ contains three pairwise disjoint holomorphic disks.
\end{theorem}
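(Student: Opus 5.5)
The plan is to build $(q_n)$ from a slowly decaying profile $\rho_n\to0$ plus a sparse sequence of sharp corrections, and then read off every property in the statement from the bookkeeping quantities $d_n$ and $\lambda_n$.

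\emph{Bookkeeping.} Put $Q_n:=\sum_{j\le n}q_j$ and $e_n:=q_{n+1}-Q_n$. Then $\lambda_n=c^{e_n}$ and $\log(1/\lambda_n)=e_n\log(1/c)$. Since $e_n-e_{n-1}=q_{n+1}-2q_n=-d_n$, we get $e_n=e_0-\sum_{j\le n}d_j$. In addition, $\rho_{n+1}=\rho_n-d_n/2^{n+1}$.

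\emph{The requirements in terms of $(d_n)$.}
\begin{enumerate}[label=(\alph*)]
\item Summability $\sum\lambda_n<\infty$ asks that $e_n\to\infty$ fast enough, e.g.\ $e_n\ge 2\log_{1/c} n$. So the partial sums $\sum d_j$ must drift to $-\infty$, meaning $q_{n+1}>2q_n$ most of the time.
\item The condition $\rho_n\to0$ asks that the weighted sum $\sum d_j2^{-j-1}$ be positive and tend to $\rho_0$.
\item The ratio condition $q_{n+1}/q_n\to2$ asks that $d_n=o(q_n)$.
\end{enumerate}

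\emph{The construction.} Choose a very sparse sequence of times $n_1<n_2<\cdots$ and set:
\begin{itemize}
\item $d_n=-1$ for $n\notin\{n_k\}$, so that $e_n$ climbs linearly between spikes;
\item $d_{n_k}=D_k$ with $D_k>0$ large, so that $e$ drops abruptly at $n_k$.
\end{itemize}
We take $D_k\approx \tfrac12 e_{n_k-1}$, which is still $o(q_{n_k})$ because $e_n$ grows only polynomially while $q_n$ grows like $2^n\rho_n$. Then:
\begin{itemize}
\item Between spikes, $e_n$ grows linearly in $n$, and the halving at each $n_k$ does not destroy growth if the gaps $n_{k+1}-n_k$ increase. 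Hence $e_n\to\infty$ and summability holds.
\item The weighted effect of the spikes, $\sum_k D_k2^{-n_k-1}$, together with $-\sum 2^{-j-1}$, is tuned by choosing $\rho_0$ appropriately (or by adding a small smooth positive part to $d$) so that $\rho_n\downarrow0$. Then $q_n^{1/n}\to2$ follows from $q_{n+1}/q_n\to2$.
\item For $\alpha_n$: at $n$ just before $n_k$, $L_n\ge\lambda_{n_k}=c^{e_{n_k}}$ with $e_{n_k}\approx e_n/2$, so $\alpha_n\lesssim\tfrac12$. To get $\alpha_n\to0$ we instead cut more aggressively, $e_{n_k}=o(e_{n_k-1})$, for instance $D_k=e_{n_k-1}-\sqrt{e_{n_k-1}}$. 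For $n$ far from the spike we must also control the slow part of $e$. The clean way is to let $e_n$ follow a sawtooth in which every $n$ has a later spike $j>n$ with $e_j\le \varepsilon_n e_n$ and $\varepsilon_n\to0$. I would arrange this with nested spikes at geometrically many scales, so that $e_n$ is not monotone on any long interval. Then $\log(1/L_n)\le e_j\log(1/c)+O(1)$, which gives $\alpha_n\to0$.
\item Parabolicity is then immediate from \Cref{cor:intro-regular}, since $q_{n+1}/q_n\to2$ and $\rho_n\to0$.
\end{itemize}

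\emph{Properness of $\Ucal$ (the main obstacle).} We must exhibit points $p\in\Omega$ for which $\Phi_n(p)$ does not stay bounded. We use the exact shear recurrence: $\Phi_{n+1}=T_{n+1}\circ\sigma_{n}\circ\Phi_n$ up to the diagonal linear factor, with alternating quadratic shears $\sigma_n$ of strength $\lambda_n$. At a spike time $n_k$, the shear strength $\lambda_{n_k}$ is enormous compared with $\lambda_{n_k-1}$.

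Consider three disks in the $z$-axis direction (or in the $w$-axis direction, according to the parity of the $n_k$), centered at points chosen so that the preceding normalized coordinates are of size comparable to $\lambda_{n_k}^{-1/2}$. On such a point the shear at time $n_k$ produces an $O(1)$ displacement at every spike, and these displacements accumulate. So $\Phi_n(p)\to\infty$ along the spike subsequence, and $p\notin\Ucal$.

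At the same time we must check that $F(n)(p)$ eventually enters $\B_R$. Here I would use the unnormalized H\'enon dynamics: the $z^2$ term squares away the size once $\|F(n)(p)\|<1/2$, and the estimate $\|F_n\|\le b$ on $\B_R$ keeps the orbit there afterwards. To get three disjoint disks, I would use three disjoint residue classes of spike parity, or equivalently three prescribed initial phases. The delicate point is to make the accumulation argument robust, so that the displacement from one spike is not undone by later shears. I would handle this with a tail estimate $\sum_{j\ne n_k}\lambda_j\cdot(\text{size})^2\ll1$, which is exactly what the small values of $\alpha_n$ allow.
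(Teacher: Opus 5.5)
Your proposal has genuine gaps. Some are in the construction, but the decisive one is in the proof that $\Ucal\neq\Omega$.

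\textbf{The construction.} The bookkeeping has the wrong sign. Since $\lambda_n=a_1\cdots a_n/a_{n+1}=c^{\,Q_n-q_{n+1}}$, your $e_n=q_{n+1}-Q_n$ gives $\lambda_n=c^{-e_n}$ and $\lambda_n/\lambda_{n-1}=c^{d_n}$. Summability therefore needs $\sum_{j\le n}d_j\to+\infty$. In your scheme $\lambda_n$ is multiplied by $1/c$ at every step with $d_n=-1$ and collapses at the spikes, which is the reverse of what you describe. Positivity of $q_n$ is also not just a matter of tuning $\rho_0$: every tail $\rho_n=\sum_{j\ge n}d_j2^{-j-1}$ must be positive.

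More seriously, your mechanism for $\alpha_n\to0$ is impossible in any sign convention. Suppose $\log(1/\lambda_j)\to\infty$, and let $n'>n$ be an index where $\min_{j>n}\log(1/\lambda_j)$ is attained. Then no $j>n'$ can satisfy $\log(1/\lambda_j)\le\varepsilon_{n'}\log(1/\lambda_{n'})$ with $\varepsilon_{n'}<1$. At such record indices every later term is at most $\lambda_{n'}$. So $L_{n'}\ge\lambda_{n'}^{o(1)}$ can only come from an enormous number of terms accumulating, not from one later large term.

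This is why the paper takes $q_n=2^n/m$ on blocks $N_m\le n<N_{m+1}$. There $d_n=0$, so $\lambda_n=\Lambda_m$ stays constant on a block of length $\ell_m=\lceil 1/(m^2\Lambda_m)\rceil$, and all the decrease of $\lambda$ happens at the transitions. This gives $L_n\asymp1/m$ while $\log(1/\lambda_n)\gtrsim2^m/m^2$.

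\textbf{Properness of $\Ucal$.} Your argument does not produce any point of $\Omega\setminus\Ucal$. By \Cref{lem:outside-core}, such a point needs $\|\Phi_n(p)\|\ge1/L_n$ for all large $n$. On the other hand, $p\in\Omega$ forces $\lambda_n\|\Phi_n(p)\|\le1$ at the reset times of \Cref{lem:reset}. So the relevant scale is $1/L_n$, not $\lambda_{n_k}^{-1/2}$, and staying at that scale forever is a saddle-type condition that needs a stable-manifold argument.

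Counting $O(1)$ displacements controls neither side. It does not show $\|\Phi_n(p)\|\ge1/L_n$. Your appeal to the $z^2$ term assumes $F(n)(p)$ eventually becomes small, which is exactly what must be proved. And ``three residue classes of spike parity'' gives no reason to obtain holomorphic disks.

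The missing idea is the rescaling $K_n=L_n\Phi_n$:
\begin{itemize}
\item It satisfies $\{p\in\Omega:K_n(p)\to0\}=\Ucal$ and $K_{n+1}=G_n\circ K_n$, with $G_n$ close to the identity at step size $r_n=\lambda_n/L_n$.
\item Under $\sum r_n^2<\infty$ and $\sum|r_{2m+1}-r_{2m}|<\infty$, which the flat blocks provide ($\sum r_n=\infty$ is then automatic), the pairs $G_{2m+1}\circ G_{2m}$ approximate the time-$h_m$ flow of $V(z,w)=(-z+\tfrac12w^2,-w+\tfrac12z^2)$.
\item The nonzero zeros of $V$ are $p_\omega=(2\omega,2\omega^2)$ with $\omega^3=1$, and they are saddles with spectrum $\{1,-3\}$.
\item A non-autonomous graph-transform argument (\Cref{lem:asymptotic-flow-stable}) gives disks on which $K_n\to p_\omega\neq0$.
\item These disks lie outside $\Ucal$. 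They lie inside $\Omega$ because $F(n)=a_{n+1}r_nS^nK_n$. They are pairwise disjoint because the limits differ, and the three disks correspond to the three cube roots of unity.
\end{itemize}

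Your parabolicity step via \Cref{cor:intro-regular} is fine, and so is deducing $q_n^{1/n}\to2$ from $q_{n+1}/q_n\to2$.
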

This particular proper inclusion does not exclude biholomorphism $\Omega\simeq\C^2$. In the case where \eqref{eq:sum} is nonsummable, the positive quadratic coefficients prevent $\Phi_n$ from converging locally uniformly on any neighborhood of the origin; the construction of $\Ucal$ used here is therefore unavailable. 

\smallskip
\noindent\emph{Organization.}
Section~\ref{sec:general} proves the general contraction criteria. Sections~\ref{sec:basic}--\ref{sec:liouville} establish the H\'enon estimates and parabolicity criterion. {Section~\ref{sec:summable} constructs the domain $\Ucal$ and proves \Cref{thm:psh-capture}; Section~\ref{sec:proper-core} gives an example with $\Ucal\subsetneq\Omega$.}

\smallskip
\noindent{\bf AI Disclaimer: }
The main ideas and proofs leading to the parabolicity criterion in Theorem~1.4, Theorems~1.6 and~1.7, Corollary~1.8, and Theorem~1.9 were already contained in the first version of the manuscript and were developed and written by the author without AI assistance.

ChatGPT was subsequently used as an exploratory tool in developing further extensions of these results. In particular, the sufficient condition for biholomorphism with \(\mathbb C^2\) in Theorem~1.4 (proved as Theorem~2.6), the refinement in Theorem~6.6, and the results of Section~7 were developed through iterative discussions with ChatGPT involving the exploration of possible arguments, the examination of intermediate claims, and the refinement of proofs.

ChatGPT was also used for language editing and for improving the clarity and exposition of the text.

\section{General contraction criteria for non-autonomous basins}
\label{sec:general}

Throughout this section let $F_j\in\operatorname{Aut}(\C^2)$ and assume that
\begin{equation}\label{eq:general-bounds}
a_j\|p\|\leq \|F_j(p)\|\leq b_j\|p\|,
\qquad p\in\B,
\end{equation}
where $0<a_j\leq b_j\leq b<1$.
Here and below $\B$ denotes the unit Euclidean ball in $\C^2$.
There is no loss of generality in taking the radius equal to one, since
a fixed linear rescaling preserves all the hypotheses.

We write
$$
A_n:=\prod_{j=1}^n a_j,\qquad
B_n:=\prod_{j=1}^n b_j,
\qquad A_0=B_0:=1.
$$
Observe that for any $0<r\leq 1$ we have $\Omega =\bigcup_{n>0}F(n)^{-1}(\B_r)$. Moreover, \eqref{eq:general-bounds} gives 
\begin{equation}\label{eq:product-bounds}
A_n\|p\|\leq \|F(n)(p)\|\leq B_n\|p\|,
\qquad p\in\B_r.
\end{equation}

\subsection{Relative extremal functions}
Suppose that $D$ is an open subset of $\C^n$ and $E\subset D$. The \emph{relative extremal function} for $E$ in $D$ is defined by
$$
u_{E,D}(z):= \sup\left\{v(z):v\in\PSH(D),\;v\leq0,\;v|_E\leq-1\right\},
$$
and its upper semicontinuous regularization $u_{E,D}^*$ is plurisubharmonic on $D$. These functions will play an important role in the proofs of Theorems \ref{notshort} and \ref{thm:second-classification}. We shall only need the following explicit form for two concentric balls.

For $0<r<1$, we have 
\begin{equation}\label{eq:extremal function}
u_{\overline{\B}_r,\B}(z)= \max\left\{-1,\, \frac{\log\|z\|}{\log(1/r)} \right\}.
\end{equation}
This function is continuous, and therefore  $u_{\overline{\B}_r,\B}^* =u_{\overline{\B}_r,\B}$. For further details, see \cite[Chapter 4.5]{Klimek}. 

\medskip

We now fix $0<r<1$ and define the sequence of domains $\Omega_n:=F(n)^{-1}(\B)$. Since $F_j(\B)\subset\B$, the domains $\Omega_n$ are increasing and
$\B\subset\Omega_n$ for every $n$. In particular, they exhaust $\Omega$.

\begin{lemma}\label{lem:extremal-exhaustion}
We have $u^*_{\overline{\B}_r,\Omega_n} \searrow u^*_{\overline{\B}_r,\Omega}$ pointwise on $\Omega$.
\end{lemma}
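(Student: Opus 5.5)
The plan is the standard exhaustion argument for relative extremal functions, carried out in three steps: monotonicity, identification of the limit as a plurisubharmonic competitor, and the reverse inequality.

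\emph{Step 1: monotonicity.} Since $\overline{\B}_r\subset\B\subset\Omega_n\subset\Omega_{n+1}\subset\Omega$, any competitor $v\in\PSH(\Omega_{n+1})$ with $v\le0$ and $v|_{\overline{\B}_r}\le-1$ restricts to a competitor on $\Omega_n$. Hence
\[
u_{\overline{\B}_r,\Omega_{n+1}}\le u_{\overline{\B}_r,\Omega_n}\quad\text{on }\Omega_n,
\qquad
u_{\overline{\B}_r,\Omega}\le u_{\overline{\B}_r,\Omega_n}\quad\text{on }\Omega_n.
\]
Taking upper semicontinuous regularizations, which are monotone, gives the same inequalities for the starred functions. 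Fix $z\in\Omega$; then $z\in\Omega_n$ for all large $n$, so $u^*_{\overline{\B}_r,\Omega_n}(z)$ is eventually defined and nonincreasing in $n$. Let
\[
u:=\lim_n u^*_{\overline{\B}_r,\Omega_n}\ \ge\ u^*_{\overline{\B}_r,\Omega}.
\]

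\emph{Step 2: $u$ is a competitor for $\Omega$.} On any compact $K\subset\Omega$, the set $K$ lies in $\Omega_n$ for large $n$, since the $\Omega_n$ are open, increasing and exhaust $\Omega$. Thus $u$ is locally a decreasing limit of plurisubharmonic functions, so it is plurisubharmonic or $\equiv-\infty$ on components; either way it is an admissible upper semicontinuous candidate. Clearly $u\le0$. On the interior $\B_r$ we have $u^*_{\overline{\B}_r,\Omega_n}=-1$, because $u_{\overline{\B}_r,\Omega_n}=-1$ there: it is $\le-1$ by definition and $\ge-1$ since the constant $-1$ is a competitor.

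\emph{Step 3: boundary of the small ball, and the reverse inequality.} The points of $\partial\B_r$ need care, since regularization can raise values there. I would handle this by using the explicit formula \eqref{eq:extremal function}. Since $\Omega_n\supset\B$, Step 1 gives $u^*_{\overline{\B}_r,\Omega_n}\le u_{\overline{\B}_r,\B}$ on $\B$, and the right-hand side is continuous and equals $-1$ on $\overline{\B}_r$. Therefore $u^*_{\overline{\B}_r,\Omega_n}=-1$ on $\overline{\B}_r$ for every $n$, and so $u=-1$ on $\overline{\B}_r$.

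Hence $u$ is a competitor in the definition of $u_{\overline{\B}_r,\Omega}$, which gives $u\le u_{\overline{\B}_r,\Omega}\le u^*_{\overline{\B}_r,\Omega}$. Combined with Step 1, this yields $u=u^*_{\overline{\B}_r,\Omega}$.

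The main obstacle is the regularity point in Step 3: checking that the regularized functions still satisfy $-1$ on the closed ball, so that the limit is admissible. The comparison with the continuous extremal function of the concentric balls resolves this.
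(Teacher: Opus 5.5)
Your proof is correct and follows the paper's argument essentially step for step. The common steps are: monotonicity of the relative extremal function in the ambient domain; the decreasing limit $u$ being plurisubharmonic; the identity $u^*_{\overline{\B}_r,\Omega_n}=-1$ on $\overline{\B}_r$, obtained by comparison with the continuous extremal function of concentric balls; and the reverse inequality from restricting competitors. The only differences are that you compare with $\B$ where the paper uses $\B_R$ with $r<R<1$, and that you prove the reverse inequality first; neither changes anything.
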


\begin{proof}
By monotonicity of the relative extremal function with respect to the
ambient domain,
$$
u^*_{\overline{\B}_r,\Omega_{n+1}}
\leq
u^*_{\overline{\B}_r,\Omega_n}
$$
on $\Omega_n$. Hence $u:=\lim_{n\to\infty}u^*_{\overline{\B}_r,\Omega_n}$
is well defined on $\Omega$. We first note that
$$
u^*_{\overline{\B}_r,\Omega_n}=-1
\qquad\text{on }\overline{\B}_r.
$$
Indeed, choose $r<R<1$. Since $\B_R\subset\Omega_n$,
monotonicity and the explicit formula for concentric balls give
$$
-1
\leq
u^*_{\overline{\B}_r,\Omega_n}
\leq
u^*_{\overline{\B}_r,\B_R}
=-1
\qquad\text{on }\overline{\B}_r.
$$

Thus $-1\leq u\leq0$ and $u=-1$ on $\overline{\B}_r$.
Since $u$ is the decreasing limit of plurisubharmonic
functions, it follows that $u\in\PSH(\Omega)$, and therefore
$$
u\leq u^*_{\overline{\B}_r,\Omega}.
$$

Conversely, for every $n>0$, the restriction of $u_{\overline{\B}_r,\Omega}$ to $\Omega_n$ satisfies
$$
(u^*_{\overline{\B}_r,\Omega})|_{\Omega_n}
\leq
u^*_{\overline{\B}_r,\Omega_n}.
$$
Letting $n\to\infty$ gives $u^*_{\overline{\B}_r,\Omega}\leq u$, and the result follows.
\end{proof}

\begin{lemma}\label{lem:extremal-visibility}
Suppose that there are numbers $0<\beta_n<1$ such that
$
\overline{\B}_{\beta_n}
\subset
F(n)(\overline{\B}_r).
$
Assume moreover that there exists $0<\eta<1$ such that, for every
$p\in\Omega$, there are arbitrarily large $n$ for which
$$
\|F(n)(p)\|\leq\beta_n^\eta.
$$
Then $u^*_{\overline{\B}_r,\Omega}\leq-\eta$ on $\Omega$.
\end{lemma}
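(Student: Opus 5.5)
The plan is to compare $u^*_{\overline{\B}_r,\Omega}$ on the exhausting domains $\Omega_n$ with a pullback of the explicit extremal function for concentric balls, and then use \Cref{lem:extremal-exhaustion} to pass to the limit. Fix $p\in\Omega$ and an index $n$ with $\|F(n)(p)\|\le\beta_n^\eta$. Consider on $\Omega_n=F(n)^{-1}(\B)$ the function
$$
v_n:=u_{\overline{\B}_{\beta_n},\B}\circ F(n)=\max\Bigl\{-1,\,\frac{\log\|F(n)(z)\|}{\log(1/\beta_n)}\Bigr\}.
$$
It is plurisubharmonic on $\Omega_n$, since $F(n)$ is holomorphic and maps $\Omega_n$ into $\B$. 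It also satisfies $v_n\le 0$ there. On $\overline{\B}_r$ we have $F(n)(\overline{\B}_r)\subset\overline{\B}_{B_n r}$; however, what we need is the reverse comparison, so we do not use $v_n$ as a competitor.

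Instead we use the hypothesis $\overline{\B}_{\beta_n}\subset F(n)(\overline{\B}_r)$, that is, $F(n)^{-1}(\overline{\B}_{\beta_n})\subset\overline{\B}_r$. Take any competitor $w\in\PSH(\Omega_n)$ with $w\le0$ and $w\le-1$ on $\overline{\B}_r$. Then $w\circ F(n)^{-1}$ is plurisubharmonic on $\B$ (as $F(n)$ is an automorphism mapping $\Omega_n$ onto $\B$), it is $\le0$, and it is $\le-1$ on $\overline{\B}_{\beta_n}$. Hence
$$
w\circ F(n)^{-1}\le u_{\overline{\B}_{\beta_n},\B}\quad\text{on }\B,
$$
which means $w\le v_n$ on $\Omega_n$. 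Taking the supremum and regularizing, using continuity of $v_n$, gives
$$
u^*_{\overline{\B}_r,\Omega_n}\le v_n .
$$
At our chosen $n$ this yields
$$
u^*_{\overline{\B}_r,\Omega_n}(p)\le\max\Bigl\{-1,\frac{\eta\log\beta_n}{\log(1/\beta_n)}\Bigr\}=-\eta .
$$

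Finally, by \Cref{lem:extremal-exhaustion} the sequence $u^*_{\overline{\B}_r,\Omega_n}(p)$ decreases to $u^*_{\overline{\B}_r,\Omega}(p)$. Since the estimate $\le-\eta$ holds for arbitrarily large $n$ (with $p\in\Omega_n$ for all large $n$), monotonicity gives $u^*_{\overline{\B}_r,\Omega}(p)\le-\eta$. As $p\in\Omega$ was arbitrary, the claim follows.

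The main point to check carefully is the transfer of competitors through the biholomorphism $F(n):\Omega_n\to\B$. One has to verify that the inclusion $\overline{\B}_{\beta_n}\subset F(n)(\overline{\B}_r)$ is used in the correct direction, so that $w\circ F(n)^{-1}$ is indeed $\le -1$ on all of $\overline{\B}_{\beta_n}$. This direction is exactly what makes the comparison go the right way. Everything else is a direct computation with the explicit formula \eqref{eq:extremal function}.
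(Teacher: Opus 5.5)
Your proof is correct and is essentially the paper's argument. You bound $u^*_{\overline{\B}_r,\Omega_n}$ by $u_{\overline{\B}_{\beta_n},\B}\circ F(n)$ using the biholomorphism $F(n):\Omega_n\to\B$ and the inclusion $\overline{\B}_{\beta_n}\subset F(n)(\overline{\B}_r)$, evaluate at $p$ via the explicit concentric-ball formula, and pass to the limit with \Cref{lem:extremal-exhaustion}; the only difference is that you verify the invariance/monotonicity step by explicitly transferring competitors, whereas the paper simply cites it.
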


\begin{proof}
Since $F(n):\Omega_n\longrightarrow\B$ is biholomorphic, invariance and monotonicity of the relative extremal
function give
$$
u^*_{\overline{\B}_r,\Omega_n}(p)
=
u^*_{F(n)(\overline{\B}_r),\B}(F(n)(p))
\leq
u^*_{\overline{\B}_{\beta_n},\B}(F(n)(p)).
$$
By \eqref{eq:extremal function} we have 
$$
u^*_{\overline{\B}_{\beta_n},\B}(z)
=
\max\left\{
-1,\,
\frac{\log\|z\|}{\log(1/\beta_n)}
\right\}.
$$
Thus $\|F(n)(p)\|\leq\beta_n^\eta$ implies
$u^*_{\overline{\B}_r,\Omega_n}(p)\leq-\eta.$
By assumption, for every fixed $p\in\Omega$ this inequality holds for arbitrarily large $n$. Therefore, by Lemma \ref{lem:extremal-exhaustion}, we obtain $u^*_{\overline{\B}_r,\Omega}(p)\leq-\eta$.
\end{proof}

\begin{lemma}\label{lem:negative-extremal-parabolic}
Suppose that for some $0<r<1$ and some $\eta>0$, we have
$$
u^*_{\overline{\B}_r,\Omega}\leq-\eta
$$
on $\Omega$. Then every bounded-above plurisubharmonic function on
$\Omega$ is constant.
\end{lemma}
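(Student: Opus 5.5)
Let $v$ be a plurisubharmonic function on $\Omega$ bounded above. The plan is to show that $v$ attains its supremum on $\overline{\B}_r$, and then to push that information everywhere. Subtracting a constant, we may assume $\sup_\Omega v=0$; if $v\equiv-\infty$ there is nothing to prove. Set $M:=\max_{\overline{\B}_r}v$, which is finite and $\le 0$ by upper semicontinuity, unless $v\equiv-\infty$ near $\overline{\B}_r$, which cannot happen on the connected domain $\Omega$. The first goal is $M=0$. Suppose instead that $M<0$. Then $w:=v/|M|$ is plurisubharmonic on $\Omega$ with $w\le 0$ and $w\le -1$ on $\overline{\B}_r$, so $w$ is a competitor in the definition of $u_{\overline{\B}_r,\Omega}$. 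Hence $w\le u^*_{\overline{\B}_r,\Omega}\le-\eta$, which gives $v\le -\eta|M|<0$ on all of $\Omega$. This contradicts $\sup_\Omega v=0$ unless we also use scaling, so the argument has to be organized a bit more carefully.

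The cleaner route is to iterate. Let $s:=\sup_\Omega v=0$ and $M=\max_{\overline{\B}_r}v<0$ as above. The comparison yields $v\le s+\eta M$, that is $v\le -\eta|M|$ on $\Omega$. Taking the supremum gives $0\le -\eta|M|$, so $M=0$. To be precise, the competitor is $(v-s)/|M-s|$ with $s=0$, and it is legitimate because $v-s\le 0$ and $v-s\le M-s$ on $\overline{\B}_r$. So $v$ attains its global supremum $0$ at some point of the compact set $\overline{\B}_r\subset\Omega$. By the maximum principle for plurisubharmonic functions (subharmonic in real coordinates) on the connected open set $\Omega$, $v\equiv 0$.

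We need $\Omega$ to be connected. It is, since it is an increasing union of the sets $\Omega_n=F(n)^{-1}(\B)$, each biholomorphic to a ball and each containing $\B$. No step here is a real obstacle; the only care needed is with the normalization of the competitor and with excluding $v\equiv-\infty$.
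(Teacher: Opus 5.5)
Your proof is correct and is essentially the paper's argument. After normalizing $\sup_\Omega v=0$, your competitor $v/|M|$ is exactly the paper's $(v-M)/(M-A)$, and it forces the supremum over $\overline{\B}_r$ to equal the global supremum, after which the maximum principle on the connected set $\Omega$ finishes. Note that your first paragraph already closes the argument, since $v\le-\eta|M|<0$ on $\Omega$ directly contradicts $\sup_\Omega v=0$; the hedge about needing ``scaling'' is unfounded, and the ``iteration'' in the second paragraph just repeats the same one-step comparison.
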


\begin{proof}
Let $v\in\PSH(\Omega)$ be bounded above and assume that
$v\not\equiv-\infty$. Put
$$
M:=\sup_\Omega v,
\qquad
A:=\sup_{\overline{\B}_r}v.
$$
If $A=M$, then $v$ attains its maximum at an interior point of
$\Omega$, and hence is constant.

Suppose that $A<M$. Then the function
$$
w:=\frac{v-M}{M-A}
$$
is plurisubharmonic on $\Omega$ and satisfies
$$
w\leq0\quad\text{on }\Omega,
\quad \text{ and } \quad
w\leq-1\quad\text{on }\overline{\B}_r.
$$
It follows that
$$
w\leq
u_{\overline{\B}_r,\Omega}
\leq
u^*_{\overline{\B}_r,\Omega}
\leq-\eta<0.
$$
This contradicts $\sup_\Omega w=0$, hence $v$ is constant.
\end{proof}

\subsection{A criterion for parabolicity}
Our first result gives a sufficient condition for $\Omega$ to be parabolic. 
\begin{theorem}\label{thm:cumulative-parabolic}
Assume that for some integer $m>1$ there exist arbitrarily large
integers $n$ such that
\begin{equation}\label{eq:cumulative-power}
B_n^m\leq A_n.
\end{equation}
Then the non-autonomous basin $\Omega$ is parabolic.
\end{theorem}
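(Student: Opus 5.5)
The plan is to verify the hypotheses of \Cref{lem:extremal-visibility} with a fixed exponent $\eta$, and then conclude with \Cref{lem:negative-extremal-parabolic}. Fix $0<r<1$ and take $\beta_n := A_n r$.

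\emph{Step 1: the inclusion $\overline{\B}_{\beta_n}\subset F(n)(\overline{\B}_r)$.} By \eqref{eq:product-bounds}, $\|F(n)(p)\|\geq A_n r$ whenever $\|p\|=r$. The map $F(n)$ is injective and fixes $0$, so $F(n)(\B_r)$ is a domain containing $0$ with boundary $F(n)(\partial\B_r)$, and this boundary lies outside $\B_{A_nr}$. Hence the connected set $\B_{A_nr}$, which contains $0\in F(n)(\B_r)$ and misses the boundary of $F(n)(\B_r)$, lies inside $F(n)(\B_r)$. Taking closures gives the required inclusion.

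\emph{Step 2: the visibility estimate.} Let $p\in\Omega$. Then $F(k)(p)\in\B_r$ for some $k$, and I may assume that $k$ is such that also $\|F(k)(p)\|\le r$. For $n\ge k$, apply \eqref{eq:product-bounds} to the shifted sequence $F_{k+1},F_{k+2},\dots$. This gives
$$\|F(n)(p)\|\le \frac{B_n}{B_k}\,r.$$
Now restrict to the unbounded set of indices $n$ with $B_n^m\le A_n$. Along these indices,
$$\beta_n^{\eta} = (A_n r)^{\eta}\ge B_n^{m\eta} r^{\eta}.$$
I choose $\eta=1/(2m)$, so that $\beta_n^\eta\ge B_n^{1/2}r^{1/(2m)}$. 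It then suffices to check that
$$\frac{B_n}{B_k}\,r\le B_n^{1/2}r^{1/(2m)},$$
which is equivalent to $B_n^{1/2}\le B_k\, r^{1/(2m)-1}$. Since $B_n\le b^n\to0$ and $k$ is fixed, this holds for all sufficiently large $n$ in the index set. So for every $p\in\Omega$ there are arbitrarily large $n$ with $\|F(n)(p)\|\le\beta_n^\eta$.

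One small point: \Cref{lem:extremal-visibility} requires $0<\beta_n<1$. This is immediate, since $A_n\le1$ and $r<1$.

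\emph{Step 3: conclusion.} \Cref{lem:extremal-visibility} now gives $u^*_{\overline{\B}_r,\Omega}\le-\eta$ on $\Omega$, and \Cref{lem:negative-extremal-parabolic} then shows that $\Omega$ is parabolic.

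The main obstacle is Step 2, specifically the loss of the factor $1/B_k$ caused by the entry time $k$ of the point $p$. This loss is what forces a choice of $\eta$ strictly below $1/m$ rather than equal to it. Once that is accepted, $\eta=1/(2m)$ absorbs both the constant $B_k^{-1}$ and the power of $r$, because $B_n\to0$ geometrically.
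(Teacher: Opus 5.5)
Your proposal is correct and follows essentially the same route as the paper. You take $\beta_n=rA_n$, absorb the entry-time factor $B_k^{-1}$ (the paper's constant $C_p$) by choosing $\eta=1/(2m)$ along the indices with $B_n^m\le A_n$, and conclude with \Cref{lem:extremal-visibility} and \Cref{lem:negative-extremal-parabolic}. Your Step~1 just spells out the topological argument behind the inclusion $\overline{\B}_{rA_n}\subset F(n)(\overline{\B}_r)$, which the paper states in one line.
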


\begin{proof}
Fix $0<r<1$. We apply Lemmas
\ref{lem:extremal-exhaustion},
\ref{lem:extremal-visibility}, and
\ref{lem:negative-extremal-parabolic} with the fixed compact set
$\overline{\B}_r$.

By \eqref{eq:product-bounds},
$$
A_n\|p\|\leq \|F(n)(p)\|,
\qquad p\in\B_r.
$$
Since $F(n)$ is an automorphism fixing the origin, it follows that $
\overline{\B}_{rA_n}
\subset
F(n)(\overline{\B}_r).
$
Thus, in Lemma \ref{lem:extremal-visibility}, we may take $\beta_n:=rA_n$.
\medskip

Fix $p\in\Omega$. Since $F(n)(p)\to0$, there exists $s>0$ such that
$F(s)(p)\in\B$. Hence, for $n>s$,
$$
\|F(n)(p)\|
\leq
\frac{B_n}{B_s}\|F(s)(p)\|
=
C_pB_n,
$$
for some constant $C_p>0$.

Now consider the arbitrarily large indices $n$ for which $B_n^m\leq A_n$, and set $\eta:=\frac1{2m}$.
For such $n$,
$$
\beta_n^\eta
=
(rA_n)^\eta
\geq
r^\eta B_n^{m\eta}
=
r^\eta B_n^{1/2}.
$$
Since $B_n\leq b^n\to0$, we have, for all sufficiently large such
indices,
$$
C_pB_n
\leq
r^\eta B_n^{1/2}
\leq
\beta_n^\eta.
$$
Therefore $\|F(n)(p)\|\leq\beta_n^\eta$. Lemma \ref{lem:extremal-visibility}, together with
Lemma \ref{lem:extremal-exhaustion}, now gives
$$
u^*_{\overline{\B}_r,\Omega}
\leq
-\frac1{2m}
$$
on $\Omega$. By Lemma
\ref{lem:negative-extremal-parabolic}, every bounded-above
plurisubharmonic function on $\Omega$ is constant, hence $\Omega$ is parabolic.
\end{proof}

The following corollary is an immediate consequence of Theorem \ref{thm:cumulative-parabolic} and yields the first assertion of Theorem \ref{notshort}. 

\begin{corollary}\label{cor:pointwise-parabolic}
Suppose that there exists an integer $m>1$ such that
\begin{equation}\label{eq:pointwise-power}
b_j^m\leq a_j
\end{equation}
for all $j>0$. Then $\Omega$ is parabolic.
\end{corollary}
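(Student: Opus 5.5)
The plan is to reduce \Cref{cor:pointwise-parabolic} directly to \Cref{thm:cumulative-parabolic}. We will check that the pointwise hypothesis \eqref{eq:pointwise-power} forces the cumulative hypothesis \eqref{eq:cumulative-power} for \emph{every} index $n\geq1$. In particular it then holds along an unbounded set of indices.

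The key step is to multiply the inequalities $b_j^m\leq a_j$ for $j=1,\dots,n$. All quantities are positive, so this is legitimate, and since $(\prod_j b_j)^m=\prod_j b_j^m$ we get
$$
B_n^m=\prod_{j=1}^n b_j^m\leq\prod_{j=1}^n a_j=A_n
\qquad\text{for all } n\geq1 .
$$
The standing assumptions of this section also hold: the $F_j$ satisfy \eqref{eq:general-bounds} with $0<a_j\leq b_j\leq b<1$, and $m>1$ is an integer. So \Cref{thm:cumulative-parabolic} applies, and $\Omega$ is parabolic.

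There is no real obstacle here; the only thing to check is that we have not quietly used a hypothesis missing from the corollary. \Cref{thm:cumulative-parabolic} needs only the bounds \eqref{eq:general-bounds} on the unit ball, which the corollary inherits, and the condition $B_n^m\le A_n$ for arbitrarily large $n$, which we have verified for all $n$. For the first assertion of \Cref{notshort}, the radius $R$ there can be normalized to $1$ by the fixed linear rescaling mentioned at the start of this section. That rescaling leaves the numbers $a_j,b_j$ unchanged, so the strict inequality $b_j^m<a_j$ in \Cref{notshort} is a special case of \eqref{eq:pointwise-power}.
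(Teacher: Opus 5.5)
Your proposal is correct and matches the paper, which also calls this corollary an immediate consequence of \Cref{thm:cumulative-parabolic}. Multiplying $b_j^m\le a_j$ over $j=1,\dots,n$ gives $B_n^m\le A_n$ for every $n$, which is exactly the step needed, and your remark that the rescaling to radius $1$ leaves $a_j,b_j$ unchanged is accurate.
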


\subsection{A sufficient condition for \texorpdfstring{$\Omega\simeq\C^2$}{Omega=C2}}
We next give an additional condition under which the non-autonomous basin
from \Cref{cor:pointwise-parabolic} is in fact biholomorphic to
$\C^2$. This will prove the second assertion of Theorem \ref{notshort}.

\begin{theorem}\label{thm:general-Ck}
Assume that for some integer $m>1$,
\begin{equation}\label{eq:general-first-condition}
b_j^m<a_j
\end{equation}
for every $j>0$, and that
\begin{equation}\label{eq:general-second-condition}
{\prod_{i=1}^{j-1}b_i\leq b_j^m}
\end{equation}
for all sufficiently large $j$. Then $\Omega\simeq\C^2$.
\end{theorem}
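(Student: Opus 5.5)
The plan is to reduce to the uniformly attracting case settled by Bera and Verma \cite{BV}. The tool is a sequence of rescaled maps $G_j:=\delta_j^{-1}\circ F_j\circ\delta_{j-1}$, where $\delta_j(p)=s_jp$ are dilations with radii $s_j\searrow0$ still to be chosen. The starting observation is that $\Omega=\bigcup_n F(n)^{-1}(\B_{r_n})$ for any radii $r_n$ with $F_n(\B_{r_{n-1}})\subset\B_{r_n}$ and $r_n\geq B_n$ up to a constant. Writing $G(n):=G_n\circ\cdots\circ G_1=\delta_n^{-1}\circ F(n)$ (taking $s_0=1$), the basin of $(G_j)$ for the ball $\B$ is $\bigcup_n F(n)^{-1}(\B_{s_n})$. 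Whenever $s_n\geq cB_n$ for a fixed $c>0$, this union equals $\Omega$: the inclusion $\subset$ holds because $F(n)(p)\to0$, and $\supset$ holds because $\|F(n)(p)\|\leq C_pB_n$, as in the proof of \Cref{thm:cumulative-parabolic}. So it suffices to choose $s_j$ so that $(G_j)$ is uniformly attracting on a fixed ball, after which \cite{BV} gives $\Omega\simeq\C^2$.

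The two hypotheses enter as follows. Condition \eqref{eq:general-second-condition} gives $b_j\geq B_{j-1}^{1/m}$, so the individual contraction rates decay only slowly compared with the cumulative one. Condition \eqref{eq:general-first-condition} then gives $a_j>b_j^m\geq B_{j-1}$. Hence on the relevant region $F(j-1)(\B)\subset\B_{B_{j-1}}$, each $F_j$ moves points by a factor that is at least a fixed power of the scale at which we sit. I would take $s_j$ to be a power of $B_j$, for instance $s_j:=B_j^{\kappa}$ with $\kappa\in(0,1]$, or a mixed choice such as $s_j:=b_j^{m}B_{j-1}$, and verify two estimates for $p\in\B$. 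The lower bound should be $\|G_j(p)\|\geq a_js_{j-1}s_j^{-1}\|p\|\geq C\|p\|$, and the upper bound should be $\|G_j(p)\|\leq b_js_{j-1}s_j^{-1}\|p\|\leq D\|p\|$ with $D<1$ uniform. The upper bound uses the hypothesis $b_j\leq b<1$ together with the fact that $F_j$ is evaluated only on $\B_{s_{j-1}}\subset\B$.

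The main obstacle is that a purely linear rescaling preserves the ratio between the upper and lower rates. Indeed $a_js_{j-1}/s_j$ and $b_js_{j-1}/s_j$ differ by the factor $a_j/b_j$, which is bounded below only by $b_j^{m-1}$ and may tend to $0$. So the naive choice cannot give a uniform $C$. I would overcome this by grouping the maps into consecutive blocks $F_{n_{k+1}}\circ\cdots\circ F_{n_k+1}$ and rescaling each block. The aim is to choose the blocks so that the block upper rate $\prod b_i$ is at most a fixed constant $D<1$ while the block lower rate $\prod a_i\geq\prod b_i^m$ stays above $D^m$. That would make the block sequence uniformly attracting in the sense of \cite{BV}, in fact in Wold's regime once $m$ is handled by further grouping. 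The second condition is what should allow such blocks: since $b_j\geq B_{j-1}^{1/m}$, a single factor $b_j$ never overshoots a block's target contraction by more than a controlled power. Concretely, I would define $n_{k+1}$ as the first index where the block product of the $b_i$ drops below a fixed $D$, and then check that this product is bounded below by a uniform constant $D'$. Composing blocks does not change the basin, so \cite{BV} applies to the rescaled block sequence and yields $\Omega\simeq\C^2$. The step I expect to be the most delicate is exactly this uniform lower bound $D'$, that is, showing that the last factor of a block cannot be too small. The inequality $\prod_{i<j}b_i\leq b_j^m$ is what should control it, possibly only after replacing the unit ball by the shrinking radii $s_j$ as above.
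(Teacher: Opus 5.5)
There is a genuine gap, and it is the step you flag as delicate: the uniform lower bound $D'$ for a block does not exist, so grouping cannot produce a uniformly attracting sequence. If $\inf_j b_j>0$, then $a_j>b_j^m$ is bounded below and Bera--Verma applies directly. So the whole content of the theorem is the case $\liminf b_j=0$, which the hypotheses allow. For example, $b_j=2^{-j}$ satisfies $B_{j-1}\le b_j^m$ for $j\ge 2m+1$. In that regime a single map already contracts by $b_j\ll D$, so your blocks reduce to single maps. The inequality $b_j\ge B_{j-1}^{1/m}$ gives nothing uniform, because $B_{j-1}^{1/m}\to0$. More decisively, take $F_j=\operatorname{diag}(b_j,a_j)$ with $a_j=b_j^{m-1/2}$. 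Any block containing $F_j$, after any dilations, is a diagonal linear map whose ratio of smallest to largest singular value is at most $a_j/b_j=b_j^{m-3/2}\to0$. Grouping only multiplies these ratios, and, as you noticed yourself, dilations preserve them. Hence no choice of blocks and radii yields fixed bounds $0<C<D<1$.

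The missing idea is the opposite operation: \emph{split} each map into many uniformly contracting factors. A nonlinear map cannot be split directly. So the rescaling is used not to adjust rates, but to make $\widehat F_j(z)=r_j^{-1}F_j(r_{j-1}z)$ nearly linear relative to its smallest singular value.

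The paper takes $r_n=\varepsilon B_n^{\theta}$ with $\theta=(m-1)/m$. For $G_j=D\widehat F_j(0)^{-1}\widehat F_j$, the Cauchy estimates give $\|G_j(z)-z\|\le C\,(b_j/a_j)\,r_{j-1}\|z\|^2$. The two hypotheses enter in the form $a_j/b_j>b_j^{m-1}\ge B_{j-1}^{\theta}$, which gives $(b_j/a_j)r_{j-1}<\varepsilon$.

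One then writes $\widehat F_j=R_j\circ V_j$, where $V_j=\tau G_j$ is uniformly attracting. The linear part $R_j$ satisfies $\sigma_j^+\le\rho<1$ and $(\sigma_j^+)^K<\sigma_j^-$. Using the polar decomposition $R_j=U_jH_j$, write $R_j$ as a composition of $N_j\approx\log\sigma_j^+/\log\rho$ factors $H_j^{1/N_j}$, with the last one $U_jH_j^{1/N_j}$. Each factor has singular values in $(\rho^{2K},\rho]$. The refined sequence is uniformly attracting with the same basin, and Bera--Verma applies.

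A smaller point: your identification of the basin needs $s_n/B_n\to\infty$ (for example $\kappa<1$, as with $\theta<1$ above), not just $s_n\ge cB_n$. With $s_n=B_n$, the rescaled orbit of $p\in\Omega$ is only bounded by $C_p$, which need not be less than $1$.
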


\begin{proof}
Let $j_0$ be such that \eqref{eq:general-second-condition} holds for
every $j\geq j_0$, and put
$$
B_n:=\prod_{i=1}^n b_i,
\qquad
B_0:=1,
\qquad
\theta:=\frac{m-1}{m}<1.
$$
Thus, for every $j\geq j_0$, we have $B_{j-1}\leq b_j^m$. For $0<\varepsilon<1/2$, to be specified below, set
$$
r_n:=\varepsilon B_n^\theta,
\qquad \forall n\geq j_0-1.
$$
Then $r_j=r_{j-1}b_j^\theta$. Since $0<b_j<1$ and $\theta<1$, we have $b_j\leq b_j^\theta$, and
therefore, by \eqref{eq:general-bounds},
$$
F_j(\B_{r_{j-1}})
\subset
\B_{b_jr_{j-1}}
\subset
\B_{r_j},
\qquad j\geq j_0.
$$

Let $\Omega^{(j_0)}$ be the non-autonomous basin of the sequence $F_{j_0},F_{j_0+1},\ldots$.
Deleting finitely many maps does not change the biholomorphism type
of the basin, so $\Omega\simeq\Omega^{(j_0)}$. For $j\geq j_0$ define
$$
\widehat F_j(z)
:=
\frac1{r_j}F_j(r_{j-1}z).
$$
The basin $\widehat\Omega$ of
$(\widehat F_j)_{j\geq j_0}$ is biholomorphic to $\Omega^{(j_0)}$. Indeed, let $\Lambda_n(z):=\frac{z}{r_n}$ and observe that
$$
\widehat F_j
=
\Lambda_j\circ F_j\circ\Lambda_{j-1}^{-1},
$$
and hence
$$
\widehat F_n\circ\cdots\circ\widehat F_{j_0}
=
\Lambda_n\circ
F_n\circ\cdots\circ F_{j_0}
\circ\Lambda_{j_0-1}^{-1}.
$$
If $p\in\Omega^{(j_0)}$, choose $s\geq j_0$ such that $F_s\circ\cdots\circ F_{j_0}(p)\in\B$.
Then, for some constant $C_p>0$ and all $n>s$, we have
$$
\left\|F_n\circ\cdots\circ F_{j_0}(p)\right\|
\leq C_p B_n.
$$
Consequently,
$$
\frac{
\left\|F_n\circ\cdots\circ F_{j_0}(p)\right\|
}{r_n}
\leq
\frac{C_p}{\varepsilon}B_n^{1-\theta}
\longrightarrow0.
$$
Thus $\Lambda_{j_0-1}(p)\in\widehat\Omega$.

Conversely, if the rescaled orbit tends to the origin, then clearly
$$
F_n\circ\cdots\circ F_{j_0}(p)
=
r_n\,
\widehat F_n\circ\cdots\circ\widehat F_{j_0}
\bigl(\Lambda_{j_0-1}(p)\bigr)
\longrightarrow0,
$$
hence $\widehat\Omega=\Lambda_{j_0-1}\bigl(\Omega^{(j_0)}\bigr)$. 
\medskip

It therefore remains to prove that $\widehat\Omega\simeq\C^2$. We do this by decomposing each map $\widehat F_j$ into finitely many maps that together form a uniformly attracting sequence, to which the theorem of Bera--Verma applies.
\medskip

Let
$$
L_j:=DF_j(0).
$$
Differentiating \eqref{eq:general-bounds} at the origin gives
$$
a_j\|v\|
\leq
\|L_jv\|
\leq
b_j\|v\|.
$$
Moreover, since $\sup_{\B}\|F_j\|\leq b_j$, the Cauchy estimates give a constant $C>0$, independent of $j$, such
that
$$
\|F_j(w)-L_jw\|
\leq
Cb_j\|w\|^2,
\quad \text{ for all }\quad
\|w\|\leq\frac{1}{2}.
$$

We next define 
$$
G_j(z)
:=
D\widehat F_j(0)^{-1}\widehat F_j(z)
=
\frac1{r_{j-1}}L_j^{-1}F_j(r_{j-1}z).
$$
Then $G_j(0)=0$ and $DG_j(0)=I$. Since $r_{j-1}<1/2$, we obtain, for $z\in\B$,
$$
\|G_j(z)-z\|
\leq
C\frac{b_j}{a_j}r_{j-1}\|z\|^2.
$$
Now \eqref{eq:general-first-condition} and
\eqref{eq:general-second-condition} give
$$
\frac{b_j}{a_j}r_{j-1}
<
\varepsilon b_j^{1-m}B_{j-1}^\theta
\leq
\varepsilon b_j^{1-m}(b_j^m)^\theta
=
\varepsilon,
$$
because $\theta=\frac{m-1}{m}$. Hence
$$
\|G_j(z)-z\|
\leq
\delta\|z\|^2,
\qquad z\in\B,
$$
where $\delta:=C\varepsilon$. Recall that $b_j\leq b<1$ for all $j$, and set
$\beta:=b^{1-\theta}<1$. Choose $\beta<\tau<1$ and then fix $\varepsilon$ sufficiently small so that 
$$
\delta<1,
\qquad
\tau(1+\delta)<1.
$$
Define
$$V_j:=\tau G_j,\qquad R_j:= \frac{r_{j-1}}{\tau r_j}L_j=
\frac{b_j^{-\theta}}{\tau}L_j.
$$ 
Observe that
$$
\widehat F_j=R_j\circ V_j,
$$
and
$$
\tau(1-\delta)\|z\|
\leq
\|V_j(z)\|
\leq
\tau(1+\delta)\|z\|,
\qquad z\in\B.
$$
In particular, the maps $V_j$ are uniformly attracting. Let $\sigma_j^+$ and $\sigma_j^-$ denote the largest and smallest
singular values of $R_j$. Then
\begin{equation}\label{eq:sigma+}
 \sigma_j^+
\leq
\frac{b_j^{1-\theta}}{\tau}
\leq
\frac{\beta}{\tau}
=:\rho<1,   
\end{equation}
whereas, by \eqref{eq:general-first-condition},
$$
\sigma_j^-
\geq
\frac{a_jb_j^{-\theta}}{\tau}
>
\frac{b_j^{m-\theta}}{\tau}.
$$

Set $K_0:=\frac{m-\theta}{1-\theta}$. Equation \eqref{eq:sigma+} gives $b_j\geq
(\tau\sigma_j^+)^{1/(1-\theta)}$
and hence
$$
\sigma_j^->\tau^{K_0-1}(\sigma_j^+)^{K_0}.
$$
Since $\sigma_j^+\leq\rho<1$, we may choose $K>K_0$, independent of
$j$, so large that $\rho^{K-K_0}\leq\tau^{K_0-1}$. It follows that
$$
(\sigma_j^+)^K<\sigma_j^-.
$$

We now factor the linear maps $R_j$ into uniformly contracting
factors. For each $j$, choose an integer $N_j\geq1$ such that
$$
\rho^{N_j+1}<\sigma_j^+\leq \rho^{N_j}.
$$
Let $R_j=U_j\circ H_j$ be the polar decomposition, where $U_j$ is unitary and $H_j$ is
positive definite, and put
$$
Q_j:=H_j^{1/N_j}.
$$
Then
$$
R_j
=
(U_jQ_j)\circ
\underbrace{Q_j\circ\cdots\circ Q_j}_{N_j-1\ {\rm times}}.
$$
The largest singular value of each of these linear factors is $(\sigma_j^+)^{1/N_j}\leq \rho$.
On the other hand,
$$
(\sigma_j^-)^{1/N_j}
>
(\sigma_j^+)^{K/N_j}
>
\rho^{K(N_j+1)/N_j}
\geq \rho^{2K}.
$$
Thus every linear factor satisfies
$$
\rho^{2K}\|z\|
<
\|Q_jz\|
\leq
\rho\|z\|.
$$
The same estimates hold for $U_jQ_j$, since $U_j$ is unitary.

We now replace each map
$$
\widehat F_j=R_j\circ V_j
$$
by the finite block consisting of $V_j$ followed by the above linear
factors of $R_j$. If
$$
A:=
\min\{\rho^{2K},\,\tau(1-\delta)\},
\qquad
B:=
\max\{\rho,\,\tau(1+\delta)\},
$$
then $0<A<B<1$ and all the maps in the refined decomposition form a uniformly attracting sequence with respect to these bounds. {Its non-autonomous basin coincides with $\widehat\Omega$. Indeed, at the ends of the inserted blocks the compositions are exactly those of the rescaled sequence. Conversely, once those endpoint values lie in $\B$ and tend to zero, every intermediate value has norm at most the preceding endpoint norm, since all inserted factors contract $\B$.  The conclusion now follows from \cite[Theorem~1.1]{BV}.} Thus
$\widehat\Omega\simeq\C^2$,
and hence $\Omega\simeq\C^2$.
\end{proof}

\section{Basic properties of the non-autonomous basin for a H\'enon family}\label{sec:basic}
From now on we shall restrict to our H\'enon family
$$
 F_n(z,w)=(z^2+a_nw,a_nz),
 \qquad
 a_n=c^{q_n},\quad 0<c<1,
$$
with $q_n\to\infty$. We use the Euclidean norm unless stated otherwise.

\begin{proposition}\label{prop:basic}
The non-autonomous basin $\Omega$ is a connected, proper open subset of $\C^2$. It is an increasing union of domains biholomorphic to a ball, and its infinitesimal Kobayashi metric vanishes identically. Moreover, $F(n)\to0$ locally uniformly on $\Omega$.
\end{proposition}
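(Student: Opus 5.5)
We have to prove Proposition \ref{prop:basic} for the H\'enon family. The plan is to first fix a small ball on which every $F_n$ contracts uniformly, deduce openness and the exhaustion from that, and then read off the remaining properties.

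\textbf{Step 1: a uniform contraction ball.} For $\|(z,w)\|\le r$ we have
$$\|F_n(z,w)\|\le |z|^2+a_n|w|+a_n|z|\le (r+2a_n)\|(z,w)\|.$$
Since $a_n=c^{q_n}\to0$, we get $\sup_n a_n<1$. Choose $r>0$ small and discard finitely many maps if needed, so that $b:=r+2\sup_{n\ge n_0}a_n<1$. Discarding finitely many maps is harmless: $\Omega=F(n_0)^{-1}(\Omega^{(n_0)})$ and $F(n_0)$ is an automorphism. With this ball $\B_r$ we get $F_n(\B_r)\subset\B_{br}$, and hence
$$\Omega=\bigcup_n F(n)^{-1}(\B_r).$$
This is an increasing union of the open sets $F(n)^{-1}(\B_r)$, each biholomorphic to a ball via $F(n)$. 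So $\Omega$ is open. It is connected, being an increasing union of connected sets that all contain $0$.

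\textbf{Step 2: local uniform convergence.} Let $K\subset\Omega$ be compact. By compactness $K\subset F(N)^{-1}(\B_r)$ for some $N$. Then for $n\ge N$,
$$\|F(n)\|\le b^{\,n-N}r \quad\text{on } K,$$
so $F(n)\to0$ uniformly on $K$.

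\textbf{Step 3: vanishing Kobayashi metric.} Fix $p\in\Omega$ and a tangent vector $v$. For large $n$ the point $F(n)(p)$ lies in $\B_{r/2}$. The set $F(n)^{-1}(\B_r)$ contains the image under $F(n)^{-1}$ of a ball of radius $r/2$ about $F(n)(p)$. Hence
$$k_\Omega(p,v)\le \frac{2}{r}\,\|DF(n)(p)v\|.$$
It remains to show $DF(n)(p)v\to0$. This follows from Cauchy estimates applied to the uniform convergence $F(n)\to0$ on a neighborhood of $p$ from Step 2. Therefore $k_\Omega\equiv0$.

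\textbf{Step 4: properness, the main obstacle.} We need a point whose orbit escapes. Consider the region
$$V=\{|z|\ge R,\ |w|\le |z|\}$$
with $R\ge 3$. On $V$ we have
$$|z^2+a_nw|\ge |z|^2-|z|\ge 2|z|,$$
while the new second coordinate satisfies $|a_nz|\le|z|$. So $F_n(V)\subset V$, and the first coordinate at least doubles at each step. Hence every point of $V$ has orbit tending to infinity, so $V\cap\Omega=\emptyset$ and $\Omega\ne\C^2$. Everything else is routine; this escape-region argument is the only point needing a concrete inequality, and it works uniformly in $n$ because $a_n<1$.
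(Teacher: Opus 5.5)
Your proof is correct and follows essentially the same route as the paper. Both arguments use a forward-invariant, uniformly contracted ball whose preimages $F(n)^{-1}(\B_r)$ exhaust $\Omega$, bound the Kobayashi metric via monotonicity together with $DF(n)(p)v\to0$, and exhibit an escaping orbit for properness. The differences are only technical: you get $DF(n)(p)v\to0$ from Cauchy estimates rather than from $DF_{n+1}(F(n)(p))\to0$ along the orbit, and you use the open trapping region $\{|z|\ge R,\ |w|\le|z|\}$ instead of the real orbit of $(R,R)$, which has the small advantage of not relying on $a_n>0$.
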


\begin{proof}
Since $a_n\to0$, after deleting finitely many terms we may assume $a_n\le1/4$ for every $n$. For $p\in\B_{1/4}$, a direct estimate gives $\norm{F_n(p)}\le\frac34\norm{p}$. Hence $\B_{1/4}$ is forward invariant and $F(n)\to0$ uniformly there. The domains  $\Omega_n:=F(n)^{-1}(\B_{1/4})$
are increasing, each is biholomorphic to a ball, and their union is $\Omega$. This proves openness, connectedness, and local uniform convergence to zero on $\Omega$.
\medskip

Fix $p\in\Omega$ and write $(z_n,w_n):=F(n)(p)$. Then
$$
 DF_{n+1}(F(n)(p))=
 \begin{pmatrix}2z_n&a_{n+1}\\ a_{n+1}&0\end{pmatrix}
 \longrightarrow0.
$$
Thus, for sufficiently large $n$, the derivatives along the orbit have norm at most $1/2$, and consequently $DF(n)(p)v\to0$ for every $v\in T_p\C^2$. By biholomorphic invariance and monotonicity of the Kobayashi metric,
$$
 \kappa_\Omega(p;v)
 \le\kappa_{\Omega_n}(p;v)
 =\kappa_{\B_{1/4}}(F(n)(p);DF(n)(p)v)\longrightarrow0.
$$
Finally, for every real $R>1$ the first coordinate of the orbit of $(R,R)$ is positive and strictly increasing, hence this orbit escapes to infinity. Thus $\Omega\ne\C^2$.
\end{proof}

{For completeness we give a direct proof of the following consequence of Forn\ae ss's  criterion \cite[Theorem~1.4]{Fshort}; see also \cite[Introduction]{FP}. It provides the first assertion of \Cref{thm:first-classification}.}

\begin{theorem}\label{thm:short}
If $\liminf_{n\to\infty}\rho_n>0$, then $\Omega$ is a Short $\C^2$.
\end{theorem}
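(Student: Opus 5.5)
The plan is to take properties (i) and (ii) of a Short $\C^2$ from \Cref{prop:basic}, so that only property (iii) remains: the existence of a nonconstant bounded plurisubharmonic function on $\Omega$. I would build it as a Green-type function for the non-autonomous dynamics at the scale $2^n$. Since $\liminf\rho_n>0$, there are $\delta>0$ and $n_0$ with $a_n=c^{q_n}\le c^{\delta 2^n}$ for $n\ge n_0$. Set
$$
u_n(p):=2^{-n}\log\norm{F(n)(p)},\qquad u:=\Bigl(\limsup_{n\to\infty}u_n\Bigr)^*,\qquad v:=\max\{u,-K\},
$$
with $K>0$ to be fixed later. Each $u_n$ is plurisubharmonic on $\Omega$, because $F(n)$ is holomorphic and $\log\norm{\cdot}$ is plurisubharmonic.

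The first step is upper boundedness. By \Cref{prop:basic}, $F(n)\to0$ locally uniformly on $\Omega$. Hence on every compact subset of $\Omega$ we eventually have $\norm{F(n)}\le 1/4$, which gives $u_n\le0$. So the family $(u_n)$ is locally uniformly bounded above, and $u\le 0$ is plurisubharmonic on $\Omega$ (or identically $-\infty$, which the third step rules out). Consequently $v$ is bounded, with $-K\le v\le0$.

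The second step is an upper estimate near the origin. For $\norm{p}$ small, $\norm{F_n(p)}\le C(\norm{p}^2+a_n\norm{p})$, and $a_n\le c^{\delta2^n}$ makes the linear term negligible at the scale $2^n$. Inductively this gives $\norm{F(n)(p)}\le (C'\max\{\norm{p},c^{\delta'}\})^{2^n}$ for some $\delta'>0$. Therefore $u(p)\le \log\max\{C'\norm{p},C'c^{\delta'}\}$ near $0$, so $u\le -\kappa$ on a small ball around the origin for some $\kappa>0$. If we choose $K$ with $\kappa/2<K<\kappa$, then $v=-K$ on that ball.

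The third step, which I expect to be the main obstacle, is to show that $v$ is not identically $-K$. It suffices to find points of $\Omega$ with $u>-\kappa/2$. The natural candidates are points $p\in\Omega$ close to $\partial\Omega$ that enter $\B_{1/4}$ only at a late time $N$, near its boundary. For these I would bound $\norm{F(n)(p)}$ from below for $n\ge N$, so that the orbit cannot decay faster than roughly $r^{2^{n-N}}$. The difficulty is that the first coordinate $z^2+a_nw$ could cancel and destroy such a lower bound.

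To handle this I would work with positive real points, for instance on the diagonal $\{z=w>0\}$ near the escaping points $(R,R)$ from \Cref{prop:basic}. There all coordinates stay positive and $z_{n+1}\ge z_n^2$, so $\norm{F(n)(p)}\ge z_N^{2^{n-N}}$ for $n\ge N$ and hence $u(p)\ge 2^{-N}\log z_N$. To make this close to $0$ I would use a continuity and intermediate-value argument along the real diagonal. As the real parameter increases to the boundary point of $\Omega$ on the diagonal, the entrance time $N$ into $\B_{1/4}$ tends to infinity while $z_N$ stays bounded below by a fixed constant (the last coordinate before entering is at least of order $1/4$, up to the small $a_n$ contribution). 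This forces $2^{-N}\log z_N\to0$, so $u$ takes values arbitrarily close to $0$ at such points. Thus $v$ attains values above $-\kappa/2>-K$ as well as the value $-K$, so $v$ is a nonconstant bounded plurisubharmonic function, which establishes (iii).
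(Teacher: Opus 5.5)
Your proof is correct and follows the same strategy as the paper. Both build a Green-type function at scale $2^n$, bound it above near the origin through the recursion driven by $a_{n+1}\le c^{\kappa 2^n}$, and show it is nonconstant using positive real orbits, along which $z_{n+1}\ge z_n^2$.

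The paper is leaner in two places.
\begin{itemize}
\item It puts the floor inside the logarithm: $M_n:=\max\{\norm{F(n)}_\infty,\beta_n\}$ with $\beta_n=c^{\kappa 2^n}$. Then $M_{n+1}\le 2M_n^2$, so $2^{-n}\log M_n+\sum_{j>n}2^{-j}\log 2$ decreases to a plurisubharmonic $u$ with $\kappa\log c\le u\le 0$ and $u(0)=\kappa\log c$ exactly. This removes the need for the regularized $\limsup$, the truncation $\max\{u,-K\}$, and the estimate on a neighborhood of $0$.
\item For nonconstancy it takes $p_N:=F(N)^{-1}(r_0,0)$, whose orbit is at a fixed positive real point at time $N$, so $u(p_N)\ge 2^{-N}\log r_0$ immediately. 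Your diagonal points near $\partial\Omega$ achieve the same thing. They need the extra argument that the entrance time tends to infinity while $z_N$ stays bounded below. That argument does hold, since $\overline{\B}_{1/4}$ lies in the basin of every tail once $a_n\le 1/4$.
\end{itemize}

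One detail in your second step needs adjusting. The bound $u\le\log\max\{C'\norm{p},C'c^{\delta'}\}$ gives $u<0$ near $0$ only if $C'c^{\delta'}<1$. This is not automatic: the natural recursion gives $C'=2$, and $2c^{\delta'}$ may exceed $1$ when $\delta'$ is small. Start the recursion at a late index $n_0$ instead. It yields
\[
u(p)\le 2^{-n_0}\log\bigl(2\max\{\norm{F(n_0)(p)}_\infty,c^{\delta 2^{n_0}}\}\bigr),
\]
which is negative near the origin once $2^{-n_0}\log 2<\delta\log(1/c)$.
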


\begin{proof}
Choose $0<\kappa<2\liminf_{n\to\infty}\rho_n$. After deleting finitely many terms, we have
$q_{n+1}\ge \kappa 2^n$ for all sufficiently large $n$. Set $\beta_n:=c^{\kappa2^n}$, and observe that $a_{n+1}\le\beta_n$ and $\beta_{n+1}=\beta_n^2$ for all sufficiently large $n$.
For $p\in\C^2$ put
$$
 M_n(p):=\max\bigl\{\norm{F(n)(p)}_\infty,\beta_n\bigr\}.
$$
The definition of $F_{n+1}$ gives $ M_{n+1}\le2M_n^2$. Indeed, $a_{n+1}\le M_n$ and both coordinates of $F(n)(p)$ have modulus at most $M_n$.

Define
$$
 u_n:=2^{-n}\log M_n+
 \sum_{j=n+1}^{\infty}2^{-j}\log2.
$$
Each $u_n$ is plurisubharmonic and the preceding inequality implies $u_{n+1}\le u_n$. Hence $u_n$ decreases to a plurisubharmonic function $u$ that is not identically $-\infty$ because
$$
 u(0)=\kappa\log c.
$$
For $p\in\Omega$ one has $M_n(p)\to0$, so $u(p)\le0$, while $M_n\ge\beta_n$ gives $u\ge\kappa\log c$ on $\Omega$.
\medskip

It remains to prove that $u$ is nonconstant. Fix $0<r_0<1/4$ and, for sufficiently large $N$, set
 $p_N:=F(N)^{-1}(r_0,0)$. Since $(r_0,0)$ lies in the fixed attracting ball used in \Cref{prop:basic}, one has $p_N\in\Omega$. Starting from $(r_0,0)$, all subsequent coordinates are nonnegative real numbers and the first coordinate satisfies $z_{N+j}\ge r_0^{2^j}$ for all $j\geq0$. Consequently
$$
 u(p_N)\ge2^{-N}\log r_0.
$$
For sufficiently large $N$, this is strictly greater than the fixed negative number $u(0)=\kappa\log c$. Thus $u$ is a nonconstant bounded-above plurisubharmonic function on $\Omega$. The remaining defining properties of a Short $\C^2$ follow from \Cref{prop:basic}.
\end{proof}

\section{Two normalizations and contraction estimates}\label{sec:normalization}
We continue to work with the quadratic H\'enon family. In this section we establish several preliminary results used below. Set
$$
 Q_n:=\sum_{j=1}^nq_j,
 \qquad
 P_n:=\prod_{j=1}^na_j=c^{Q_n},
 \qquad
 P_0:=1,
$$
and
$$
 \lambda_n:=\frac{P_n}{a_{n+1}},
 \qquad n\ge0.
$$
Recall that $d_n=2q_n-q_{n+1}$ and observe that $\frac{\lambda_n}{\lambda_{n-1}}=c^{d_n}$.
Let $S(z,w)=(w,z)$ and define
\begin{equation}\label{eq:normalization}
 \Phi_n:=(DF(n)(0))^{-1}\circ F(n).
\end{equation}
Observe that $DF(n)(0)=P_nS^n$ and hence  $F(n)=P_nS^n\Phi_n$. Moreover, $\Phi_{n+1}=\sigma_n\circ\Phi_n$ where
$$
 \sigma_n=
 \begin{cases}
 (z,w+\lambda_nz^2),&n\text{ even},\\
 (z+\lambda_nw^2,w),&n\text{ odd}.
 \end{cases}
$$

These shears give a lower bound for the image of a fixed ball.

\begin{lemma}\label{lem:inner-radius}
Fix $0<r<\frac{1}{4}$ and set
$$
 \Sigma_n:=\sum_{j=0}^{n-1}\lambda_j, \qquad
 \beta_n(r):=\frac{P_n}{r^{-1}+\Sigma_n}.
$$
Then $ \B_{\beta_n(r)}\subset F(n)(\B_r)$. Moreover, if $\mu:=\log(1/c)$, then for all sufficiently large $n$,
$$
 \beta_n(r)\ge
 \frac{e^{-2\mu Q_n}}{n+r^{-1}}.
$$
\end{lemma}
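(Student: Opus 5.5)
Since $F(n)=P_nS^n\Phi_n$ and $S$ is unitary, the inclusion $\B_{\beta_n(r)}\subset F(n)(\B_r)$ is equivalent to $\B_{\beta_n(r)/P_n}\subset\Phi_n(\B_r)$. Writing $\Phi_n=\sigma_{n-1}\circ\cdots\circ\sigma_0$, it therefore suffices to show
$$
\B_{s}\subset \Phi_n(\B_r),\qquad s:=\frac{1}{r^{-1}+\Sigma_n}.
$$
I would argue through the inverse $\Phi_n^{-1}=\sigma_0^{-1}\circ\cdots\circ\sigma_{n-1}^{-1}$. Each $\sigma_j^{-1}$ is again a shear: for even $j$ it is $(z,w)\mapsto(z,w-\lambda_jz^2)$, and for odd $j$ it is $(z,w)\mapsto(z-\lambda_jw^2,w)$. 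So the task is to show that $\Phi_n^{-1}(\B_s)\subset\B_r$.

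For this I would run a sup-norm bookkeeping. Take $p\in\B_s$, so $\norm{p}_\infty< s$, and set $t_0:=\norm{p}_\infty$. Applying one shear $\sigma_j^{-1}$ to a point $x$ with $\norm{x}_\infty\le t$ gives a point of sup-norm at most $t+\lambda_jt^2$. Define $t_{k+1}=t_k+\lambda_{n-1-k}t_k^2$. The key step is the Riccati-type comparison
$$
\frac1{t_{k+1}}\ge \frac1{t_k}-\lambda_{n-1-k}.
$$
This follows from $t(1+\lambda t)\le t/(1-\lambda t)$ whenever $\lambda t<1$. Summing the comparison gives
$$
\frac1{t_n}\ge \frac1{t_0}-\Sigma_n> r^{-1}.
$$
Positivity of the intermediate quantities, $1/t_k-\lambda\geq 1/t_n>0$, is maintained inductively, which guarantees $\lambda t_k<1$ at every step.

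This yields $\norm{\Phi_n^{-1}(p)}_\infty<r$. The one remaining point is the passage between the two norms, and it is the only place needing care. The Euclidean norm is at most $\sqrt2$ times the sup-norm, so as written the bound would only give $\Phi_n^{-1}(p)\in\B_{\sqrt2 r}$. To repair this, I would run the Riccati argument with a weighted quantity. The cleanest choice is to keep the Euclidean norm throughout and use the bound $\norm{\sigma_j^{-1}(x)}\le \norm{x}+\lambda_j\norm{x}^2$. This bound holds because the shear changes only one coordinate, by $\lambda_j z^2$ with $\abs{z}\le\norm{x}$, so by the triangle inequality the Euclidean norm grows by at most $\lambda_j\norm{x}^2$. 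With this, the recursion above works verbatim in the Euclidean norm and the norm-passage issue disappears. This proves the inclusion.

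For the lower bound, write $\lambda_j=c^{Q_j-q_{j+1}}$. Since $q_{j+1}\le Q_{j+1}\le Q_n$ for $j\le n-1$ and $Q_j\geq0$, we get $\lambda_j\le c^{-q_{j+1}}\le e^{\mu Q_n}$, and hence $\Sigma_n\le n e^{\mu Q_n}$. Then
$$
\beta_n(r)=\frac{e^{-\mu Q_n}}{r^{-1}+\Sigma_n}\ge \frac{e^{-\mu Q_n}}{e^{\mu Q_n}(r^{-1}+n)}=\frac{e^{-2\mu Q_n}}{n+r^{-1}},
$$
using $e^{\mu Q_n}\ge1$, which holds once $Q_n\ge0$, in particular for large $n$ since $q_n\to\infty$. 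I expect the main (minor) obstacle to be the inductive control in the Riccati step. The crude estimate on $\lambda_j$ is routine.
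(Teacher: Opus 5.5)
Your proposal is correct and is essentially the paper's argument. The paper runs the same Riccati comparison $1/u_j\ge 1/u_{j+1}-\lambda_j$ in the Euclidean norm, with $u_j=\|\Phi_j(p)\|$ and $p=F(n)^{-1}(y)$; your majorant sequence for $\Phi_n^{-1}$ is an equivalent packaging of this, and the paper bounds $\lambda_j\le e^{\mu Q_n}$ exactly as you do. The sup-norm detour and the inductive check $\lambda t_k<1$ are unnecessary, because $\frac{1}{t+\lambda t^2}\ge\frac1t-\lambda$ holds for all $t>0$ and $\lambda\ge0$.
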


\begin{proof}
Let $y\in\B_{\beta_n(r)}$ and put $p=F(n)^{-1}(y)$. It therefore suffices to show that $p\in\B_r$.

For $0\leq j\leq n$ set $u_j:=\|\Phi_j(p)\|$. Since $\Phi_j=\sigma_j^{-1}\circ\Phi_{j+1}$,
we obtain
$$
u_j\leq u_{j+1}+\lambda_ju_{j+1}^2.
$$
Consequently
$$
\frac1{u_j}
\geq
\frac1{u_{j+1}+\lambda_ju_{j+1}^2}
=
\frac1{u_{j+1}}
-\frac{\lambda_j}{1+\lambda_ju_{j+1}}
\geq
\frac1{u_{j+1}}-\lambda_j.
$$
Hence 
$$
\frac1{u_0}
\geq
\frac1{u_n}-\sum_{j=0}^{n-1}\lambda_j.
$$
Next observe that
$$\|y\|=\|F(n)(p)\|=P_n\|\Phi_n(p)\|=P_n u_n.$$
Therefore
$$
\frac1{u_0}
\geq
\frac{P_n}{\|y\|}-\Sigma_n.
$$
Using $\|y\|<\beta_n(r)$, this inequality implies that $\|p\|=u_0<r$. Since this holds for any $y\in\B_{\beta_n(r)}$ we get $\B_{\beta_n(r)}\subset F(n)(\B_r)$.

\medskip

For the second assertion, write $\mu=\log(1/c)$. Since $P_n=e^{-\mu Q_n}$
and, for $0\leq j<n$,
$$
\lambda_j=c^{Q_j-q_{j+1}}=e^{\mu(q_{j+1}-Q_j)}\leq e^{\mu Q_n},
$$
we have $\Sigma_n\leq ne^{\mu Q_n}$. Therefore
$$
\beta_n(r)
\geq
\frac{e^{-\mu Q_n}}
     {r^{-1}+ne^{\mu Q_n}}
=
\frac{e^{-2\mu Q_n}}
     {n+r^{-1}e^{-\mu Q_n}}
\geq
\frac{e^{-2\mu Q_n}}{n+r^{-1}}.
$$
\end{proof}

The next lemma {is} a version of the argument in \cite[Lemma~4.2]{FP}.

\begin{lemma}\label{lem:reset}
Assume $\liminf\rho_n=0$. For every compact $K\Subset\Omega$ there exist arbitrarily large $n$ such that
$$
 \sup_{p\in K}\norm{F(n)(p)}\le a_{n+1}.
$$
\end{lemma}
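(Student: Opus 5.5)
Since $K$ is compact and $F(n)\to0$ locally uniformly on $\Omega$ by \Cref{prop:basic}, we may fix $s$ with $F(s)(K)\subset\B_{r_0}$ for a small $r_0$, say $r_0\le 1/8$. From time $s$ on we track the sup-norm quantity $m_n:=\sup_{p\in K}\norm{F(n)(p)}_\infty$ and compare it with the coefficients. The model recursion comes from $F_{n+1}(z,w)=(z^2+a_{n+1}w,a_{n+1}z)$:
$$
m_{n+1}\le m_n^2+a_{n+1}m_n\le 2\max\{m_n,a_{n+1}\}\,m_n .
$$
The plan is to pass to logarithms in base $c$. Write $m_n=c^{x_n}$, recalling $a_n=c^{q_n}$. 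The key claim is that, as long as $m_n>a_{n+1}$ (that is, $x_n<q_{n+1}$), we have $m_{n+1}\le 2m_n^2$. This gives $x_{n+1}\ge 2x_n-\kappa$ with $\kappa:=\log 2/\log(1/c)$. Setting $y_n:=x_n-\kappa$, we get $y_{n+1}\ge 2y_n$, and hence $y_n/2^n$ is nondecreasing along any stretch of times on which the bound $m_n\le a_{n+1}$ fails.

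Now suppose the conclusion fails, i.e.\ there is $N\ge s$ such that $m_n>a_{n+1}$ for all $n\ge N$. Then for all $n\ge N$,
$$
\frac{x_n-\kappa}{2^n}\ge \frac{x_N-\kappa}{2^N}=:\gamma .
$$
Here $\gamma>0$, since $m_N\le r_0<1/2$ gives $x_N>\kappa$. Combining this with the assumed failure $x_n<q_{n+1}$, we get
$$
q_{n+1}>x_n\ge \gamma 2^n+\kappa,\qquad\text{so}\qquad \rho_{n+1}=\frac{q_{n+1}}{2^{n+1}}\ge\frac{\gamma}{2}\quad\text{for all } n\ge N .
$$
This contradicts $\liminf\rho_n=0$. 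Therefore there are infinitely many $n$ with $m_n\le a_{n+1}$. Finally, the Euclidean norm is at most $\sqrt2$ times the sup norm, so to get the stated inequality in the Euclidean norm I will either run the same argument with the target $\sqrt2\, m_n\le a_{n+1}$ (absorbing the constant into $\kappa$), or note that the paper's convention here is harmless. The cleanest route is to run the argument directly with the Euclidean norm and the estimate $\norm{F_{n+1}(p)}\le \norm{p}^2+2a_{n+1}\norm{p}$, which yields $\norm{F_{n+1}(p)}\le 3M^2$ whenever $\norm{p}\le M$ and $a_{n+1}<M$. This simply changes $\kappa$ to $\log 3/\log(1/c)$.

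The main subtlety is the monotonicity step. The inequality $m_{n+1}\le 2m_n^2$ must be applied with the supremum over $K$, and it must hold at every step of the assumed bad stretch. This is guaranteed because, in that stretch, $a_{n+1}<m_n$ by assumption. I also have to make sure $y_N>0$ at the start, which is why we first push $K$ into a small ball using \Cref{prop:basic}; once $y_N>0$, the doubling keeps it positive throughout. No other input from the paper is needed.
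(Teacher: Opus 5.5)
Your proof is correct and is essentially the paper's argument. On a stretch where $\sup_K\|F(n)\|>a_{n+1}$, both use $\delta_{n+1}\le 2\delta_n^2$ (your $y_{n+1}\ge 2y_n$ is exactly $2\delta_{n+1}\le(2\delta_n)^2$ in base-$c$ logarithms) to force $q_{n+1}$ to be at least a fixed multiple of $2^n$, contradicting $\liminf\rho_n=0$. The paper differs only in running this on a finite window $[N,m)$, with $2\delta_N<c$ and $\rho_m\le 2^{-N-1}$, instead of an infinite tail; this also locates the good index, which is the reset-window statement used for $T(N)$ in \Cref{thm:alpha-capture}. It also works directly in the Euclidean norm via $\|F_{n+1}(p)\|\le\|p\|^2+a_{n+1}\|p\|$.
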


\begin{proof}
Put $\delta_n:=\sup_K\norm{F(n)}$. By \Cref{prop:basic}, $\delta_n\to0$. Choose $N$ arbitrarily large with $2\delta_N<c$. Since $\liminf\rho_n=0$, choose $m>N+1$ with $\rho_m\le2^{-N-1}$. Write $m=N+s+1$. Then $q_m\le2^s$.
\medskip

Suppose that $\delta_n>a_{n+1}$ for every $N\le n<m$. Then
$$
 \delta_{n+1}\le\delta_n^2+a_{n+1}\delta_n\le2\delta_n^2.
$$
Therefore
$$
 \delta_{N+s}\le(2\delta_N)^{2^s}<c^{2^s}.
$$
Since $N+s=m-1$, this inequality, together with the initial assumption, gives
$$
 c^{q_m}=a_m<\delta_{m-1}<c^{2^s}.
$$
Thus $q_m>2^s$, a contradiction.
\end{proof}

\begin{remark}\label{rem:rescale}
After discarding finitely many terms, we may assume that
$a_n\leq 1/4$ for every $n$. Conjugating the maps $F_n$ by a dilation gives
$$
\widetilde F_n(z,w)
:=4F_n(z/4,w/4)
=\left(\frac14 z^2+a_nw,a_nz\right),
$$
and hence
$$
\|\widetilde F_n(p)\|\leq \frac12\|p\|,
\qquad p\in\B.
$$
The estimates in \cref{lem:inner-radius} remain valid for
this rescaled family. Indeed, its normalized shears have
coefficients $\lambda_n/4$, so the same computation gives
$$
\B_{\widetilde\beta_n(r)}
\subset \widetilde F(n)(\B_r),
\qquad
\widetilde\beta_n(r)
:=\frac{P_n}{r^{-1}+\frac14\Sigma_n}
\geq \frac{e^{-2\mu Q_n}}{n+r^{-1}},
\qquad 0<r<1.
$$
Moreover, \cref{lem:reset} remains valid, since its proof
uses the estimate
$$
\delta_{n+1}
\leq \frac14\delta_n^2+a_{n+1}\delta_n
\leq \delta_n^2+a_{n+1}\delta_n.
$$
\end{remark}
\medskip

There is another natural way to normalize the compositions $F(n)$. Rather
than removing the accumulated linear part, we rescale the orbit at time
$n$ by the coefficient that appears in the next map. More precisely,
let
$$
\tau_n(z,w):=\frac1{a_{n+1}}(z,w).
$$
The associated non-autonomous change of coordinates transforms
$F_n$ into
$$
\tau_n\circ F_n\circ \tau_{n-1}^{-1}=\frac{a_n^2}{a_{n+1}}Q=c^{d_n}Q,
$$
where $Q(z,w):=(z^2+w,z)$. This motivates the normalization
$$J_n:=\tau_n\circ F(n)=\frac{F(n)}{a_{n+1}}=\lambda_nS^n\Phi_n.$$
It satisfies the exact recurrence
$$J_{n+1}=c^{d_{n+1}}Q(J_n).$$
Moreover,
$$
\|J_n(p)\|\leq1
\quad\Longleftrightarrow\quad
\|F(n)(p)\|\leq a_{n+1},
$$
so \Cref{lem:reset} provides indices at which this
normalization is uniformly bounded on a prescribed compact subset of
$\Omega$.

\begin{proposition}\label{prop:calibrated-collapse}
Assume $\liminf\rho_n=0$. If $(K_j)$ is a compact exhaustion of $\Omega$, then there exists an increasing sequence $n_j$ such that
$$
 J_{n_j}\longrightarrow0
$$
locally uniformly on $\Omega$.
\end{proposition}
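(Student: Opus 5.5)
The plan is to combine \Cref{lem:reset} with the exact recurrence $J_{n+1}=c^{d_{n+1}}Q(J_n)$, and then run a diagonal argument over the compact exhaustion $(K_j)$.

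First, \Cref{lem:reset} gives, for each compact $K\Subset\Omega$, arbitrarily large indices $n$ with $\sup_K\|J_n\|\le 1$. Boundedness by $1$ is not yet smallness, so a short forward propagation is needed to turn it into $\sup_K\|J_n\|\le\varepsilon$. I would obtain this by strengthening the reset lemma itself rather than using the recurrence for $J$. In the proof of \Cref{lem:reset}, choose $N$ with $2\delta_N<c\,\varepsilon$ instead of $2\delta_N<c$. Then choose $m$ with $\rho_m\le 2^{-N-1}$ and suppose that $\delta_n>\varepsilon a_{n+1}$ for all $N\le n<m$. In that range $a_{n+1}<\delta_n/\varepsilon$, so
\[
\delta_{n+1}\le\delta_n^2+a_{n+1}\delta_n\le(1+\varepsilon^{-1})\delta_n^2 .
\]
Iterating gives $\delta_{m-1}\le\bigl((1+\varepsilon^{-1})\delta_N\bigr)^{2^{s}}$, which is less than $c^{2^s}\varepsilon$ once $\delta_N$ is small enough (depending on $\varepsilon$); here $s=m-N-1$. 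On the other hand, the assumption at $n=m-1$ gives $\delta_{m-1}>\varepsilon a_m=\varepsilon c^{q_m}\ge \varepsilon c^{2^s}$, since $q_m\le 2^s$. This is a contradiction. Hence for every $\varepsilon>0$ and every compact $K$ there are arbitrarily large $n$ with
\[
\sup_K\|F(n)\|\le\varepsilon a_{n+1},\qquad\text{that is,}\qquad \sup_K\|J_n\|\le\varepsilon .
\]
The point is that $N$ may be taken arbitrarily large because $\delta_N\to0$ by \Cref{prop:basic}, and the choice of $m$ only requires $\liminf\rho_n=0$.

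With this refined reset statement, the diagonal choice is immediate. Apply it to $K=K_j$ and $\varepsilon=1/j$, and pick $n_j>n_{j-1}$ with $\sup_{K_j}\|J_{n_j}\|\le 1/j$. Any compact $K\Subset\Omega$ lies in $K_j$ for all large $j$, since the exhaustion can be taken with $K_j\subset\operatorname{int}K_{j+1}$. Therefore $\sup_K\|J_{n_j}\|\le 1/j\to0$, which is the claimed locally uniform convergence $J_{n_j}\to0$ on $\Omega$.

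The only real obstacle is the quantitative refinement of \Cref{lem:reset}. In particular, one must check that the constant $1+\varepsilon^{-1}$ does not spoil the doubling argument. It does not, because that constant is absorbed by taking $\delta_N$ small depending on $\varepsilon$, and the comparison is then between $(\text{small})^{2^s}$ and $\varepsilon c^{2^s}$. If one prefers not to reopen the proof of the lemma, a fallback is to apply \Cref{lem:reset} to a slightly enlarged compact set and use the recurrence for $J$. However, the factors $c^{d_{n+1}}$ can be large when $d_n<0$, so I expect the direct refinement to be the cleaner route.
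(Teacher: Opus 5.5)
Your proof is correct, but it takes a different route from the paper. The paper uses \Cref{lem:reset} only in its original form, which gives $n_j$ with $\sup_{K_j}\|J_{n_j}\|\le1$. This makes $(J_{n_j})$ locally bounded, so every locally uniform subsequential limit is a bounded holomorphic map on $\Omega$. Such a map is constant because the infinitesimal Kobayashi metric of $\Omega$ vanishes (\Cref{prop:basic}), and it is zero because $J_n(0)=0$. You instead sharpen the doubling argument inside \Cref{lem:reset} to get $\sup_K\|J_n\|\le\varepsilon$ directly, and then diagonalize. Your version is more elementary: it needs no normal families and no Liouville-type property of $\Omega$, only $\delta_N\to0$. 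It also gives an explicit rate, $\sup_{K_j}\|J_{n_j}\|\le 1/j$. The paper's softer argument shows something different: \emph{every} sequence satisfying merely $\sup_{K_j}\|J_{n_j}\|\le1$ already converges to zero. This matches how such sequences are reused later, for instance in \Cref{lem:psh-separator} and in the reset windows of \Cref{thm:alpha-capture}.

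One bookkeeping point. With the crude bound $\delta_{m-1}\le\bigl((1+\varepsilon^{-1})\delta_N\bigr)^{2^s}$, your stated threshold $2\delta_N<c\varepsilon$ does not close the estimate (it already fails for $s=1$). Either take $(1+\varepsilon^{-1})\delta_N<c\varepsilon$, or keep the sharper form $C\delta_{m-1}\le (C\delta_N)^{2^s}$ with $C=1+\varepsilon^{-1}$, for which $C\delta_N<c$ suffices. Your later phrase ``once $\delta_N$ is small enough (depending on $\varepsilon$)'' covers this, so it is not a gap.
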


\begin{proof}
By \Cref{lem:reset}, choose $n_j$ increasing so that
$$
 \sup_{K_j}\norm{J_{n_j}}\le1.
$$
Thus $(J_{n_j})$ is locally bounded on $\Omega$. Every locally uniform subsequential limit has bounded holomorphic coordinate functions. Since the infinitesimal Kobayashi metric of $\Omega$ vanishes identically, every bounded holomorphic function on $\Omega$ is constant. All $J_n$ fix the origin, so every subsequential limit is identically zero. Hence the full sequence $J_{n_j}$ converges locally uniformly to zero.
\end{proof}

\section{A criterion for parabolicity in the quadratic H\'enon family}\label{sec:liouville}
We now use the preceding results to prove Theorem \ref{thm:second-classification}.

\begin{proof}[Proof of Theorem \ref{thm:second-classification}] Without loss of generality, we may discard finitely many maps $F_n$ so that all remaining maps satisfy $F_n(\B_{1/4})\subset\B_{1/4}$. It suffices to prove the theorem for their conjugates
$$\widetilde F_n(z,w):=4F_n(z/4,w/4)=\left(\frac14 z^2+a_nw,a_nz\right)$$ 
which satisfy $\widetilde F_n(\B)\subset \B$. Fix $0<r<1$, and let $\widetilde\beta_n(r)$ be as in Lemma \ref{lem:inner-radius} and \cref{rem:rescale}, so that $\B_{\widetilde\beta_n(r)}\subset \widetilde F(n)(\B_r)$ and
$$
\widetilde\beta_n(r)
\geq
\frac{e^{-2\mu Q_n}}{n+r^{-1}},
\qquad
\mu:=\log\frac1c.
$$

Since $a_n=c^{q_n}$, the assumption \eqref{eq:henon-parabolic} is equivalent, after shifting the index, to
\begin{equation}\label{eq:Q_n estimate}
    Q_n\leq mq_{n+1}
\end{equation}
for all sufficiently large $n$. Choose an integer $\ell>2m$. We claim that
$$
a_{n+1}^{\ell}<\widetilde\beta_n(r)
$$
for all sufficiently large $n$. Indeed,
$$
a_{n+1}^{\ell}
=
e^{-\ell\mu q_{n+1}},
$$
whereas
$$
\widetilde\beta_n(r)
\geq
\frac{e^{-2\mu Q_n}}{n+r^{-1}}
\geq
\frac{e^{-2m\mu q_{n+1}}}{n+r^{-1}}.
$$
Thus it is enough to have
$$
\log(n+r^{-1})<(\ell-2m)\mu q_{n+1},
$$
{which holds for all sufficiently large $n$. Indeed, $q_j\to\infty$ implies $Q_n/n\to\infty$, and \eqref{eq:Q_n estimate} gives $q_{n+1}\ge Q_n/m$, so $\log n=o(q_{n+1})$.}
\medskip

Now fix $p\in\Omega$. By Lemma \ref{lem:reset} there exist arbitrarily large $n$ such that
$\|\widetilde F(n)(p)\|\leq a_{n+1}$.
For all such sufficiently large indices $a_{n+1}^{\ell}<\widetilde\beta_n(r)$,
and hence
$$
\|\widetilde F(n)(p)\|
\leq
a_{n+1}
<
\widetilde\beta_n(r)^{1/\ell}.
$$

Therefore, the hypotheses of Lemma \ref{lem:extremal-visibility} are
satisfied with $\eta:=\frac1\ell$.
Together with Lemma \ref{lem:extremal-exhaustion}, this gives $u^*_{\overline{\B}_r,\Omega}
\leq
-\frac1\ell
$
on $\Omega$, and Lemma \ref{lem:negative-extremal-parabolic} finally implies
that every bounded-above plurisubharmonic function on $\Omega$ is
constant.
\end{proof}

\begin{lemma}\label{lem:Q-ratio}
If $q_{n+1}/q_n\to2$, then there exists an integer $m>1$ such that $Q_n\leq mq_{n+1}$ for all sufficiently large $n$.
\end{lemma}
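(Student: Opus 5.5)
Since $q_{n+1}/q_n\to2$, we can fix a ratio $t$ with $1<t<2$, say $t=3/2$, and an index $n_0$ such that $q_{j+1}\ge t\,q_j$ for all $j\ge n_0$. The plan is to compare the partial sum $Q_n$ with a geometric series dominated by its last term. Iterating the ratio bound backwards from $q_{n+1}$ gives
$$
q_j\le t^{-(n+1-j)}q_{n+1},\qquad n_0\le j\le n.
$$
Summing over these indices yields
$$
\sum_{j=n_0}^{n}q_j\le q_{n+1}\sum_{k\ge1}t^{-k}=\frac{q_{n+1}}{t-1}=2q_{n+1}.
$$

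The initial segment $\sum_{j=1}^{n_0-1}q_j$ is a fixed constant $C_0$. Because $q_{n+1}\to\infty$, we have $C_0\le q_{n+1}$ for all sufficiently large $n$. Adding the two pieces gives $Q_n\le 3q_{n+1}$ eventually, so $m=3$ works. Any integer $m>1/(t-1)+1$ would also do; we take $m\ge2$ to meet the requirement $m>1$.

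There is no real obstacle here. The only point needing care is that the ratio hypothesis is used only in the weak form $q_{j+1}\ge tq_j$ with $t>1$, and that the finitely many early terms, where the ratio may be small, are absorbed using $q_n\to\infty$. Since $a_n=c^{q_n}$, the bound $Q_n\le mq_{n+1}$ is, after the index shift already noted in the proof of Theorem~\ref{thm:second-classification}, exactly the product condition \eqref{eq:henon-parabolic}.
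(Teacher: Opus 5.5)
Your proof is correct and is essentially the paper's argument. Both fix a ratio $1<t<2$ that eventually bounds $q_{j+1}/q_j$ from below, dominate the tail of $Q_n$ by the geometric sum $q_{n+1}/(t-1)$, and absorb the finitely many initial terms using $q_{n+1}\to\infty$. The only difference is cosmetic: you take $t=3/2$ and get $m=3$, while the paper keeps $s$ general and notes that any integer $m>1/(s-1)$ works.
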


\begin{proof}
Choose $1<s<2$ and $N$ such that $q_{j+1}\geq s q_j$ for all $j\geq N$. For $n\geq N$, the finitely many earlier terms must be kept separate:
$$
\frac{Q_n}{q_{n+1}}
\leq\frac{Q_{N-1}}{q_{n+1}}+\sum_{\ell=1}^{n-N+1}s^{-\ell}
\leq\frac{Q_{N-1}}{q_{n+1}}+\frac1{s-1}.
$$
Since $q_{n+1}\to\infty$, an integer $m>1/(s-1)$ satisfies $Q_n\leq m q_{n+1}$ for all sufficiently large $n$.
\end{proof}

\begin{corollary}\label{cor:regular-short}
Assume $q_{n+1}/q_n\to2$. Then
$$
 \Omega\text{ is a Short }\C^2
 \quad\Longleftrightarrow\quad
 \liminf_{n\to\infty}\rho_n>0.
$$
\end{corollary}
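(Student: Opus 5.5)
The proof combines three results already established: \Cref{thm:short}, \Cref{lem:Q-ratio}, and \Cref{thm:second-classification}. Throughout we assume $q_{n+1}/q_n\to2$ and prove the two implications separately.

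For the implication ``$\liminf\rho_n>0\Rightarrow$ Short $\C^2$'' we simply invoke \Cref{thm:short}. That result needs no hypothesis on the ratios $q_{n+1}/q_n$. It produces a nonconstant bounded-above plurisubharmonic function on $\Omega$. The remaining two defining properties of a Short $\C^2$ are supplied by \Cref{prop:basic}.

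For the converse we argue by contraposition. Suppose $\liminf\rho_n=0$. By \Cref{lem:Q-ratio} there is an integer $m>1$ with $Q_n\le mq_{n+1}$ for all sufficiently large $n$. Exponentiating with base $c\in(0,1)$ reverses the inequality and gives
$$
\prod_{j=1}^{n}a_j=c^{Q_n}\ge c^{mq_{n+1}}=a_{n+1}^m .
$$
After the index shift $n\mapsto n-1$, this is exactly condition \eqref{eq:henon-parabolic}. Hence \Cref{thm:second-classification} applies, and $\Omega$ is parabolic. A parabolic $\Omega$ admits no nonconstant bounded-above plurisubharmonic function, so property (iii) of a Short $\C^2$ fails and $\Omega$ is not a Short $\C^2$. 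This gives the equivalence, and it also gives the ``in particular'' statement of \Cref{cor:intro-regular}.

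There is no genuine obstacle in this argument. The only points to check are the direction of the inequality when passing from exponents to powers of $c<1$, and that the index shift matches the form of \eqref{eq:henon-parabolic} (equivalently \eqref{eq:Q_n estimate}) used in the proof of \Cref{thm:second-classification}.
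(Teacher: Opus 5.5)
Your proposal is correct and follows the paper's proof: \Cref{thm:short} (with \Cref{prop:basic}) gives one direction, and \Cref{lem:Q-ratio} combined with \Cref{thm:second-classification} gives parabolicity, and hence the failure of the Short $\C^2$ property, when $\liminf\rho_n=0$. Your check of the inequality direction under $c<1$ and of the index shift to \eqref{eq:henon-parabolic} is accurate.
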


\begin{proof}
If $\liminf\rho_n>0$, apply \Cref{thm:short}. If $\liminf\rho_n=0$, then \Cref{lem:Q-ratio} gives $Q_n\le m q_{n+1}$, which is equivalent to the product hypothesis in \eqref{eq:henon-parabolic}. Hence every bounded-above plurisubharmonic function on $\Omega$ is constant, so $\Omega$ cannot be a Short $\C^2$.
\end{proof}

\section{Derivative normalization in the \texorpdfstring{$\lambda$}{lambda}-summable case}\label{sec:summable}
Recall that the map $\Phi_N$ defined in \eqref{eq:normalization} is an alternating composition of two types of shears $\sigma_n$. Therefore we have
$$
 \Phi_N(z,w)=\operatorname{id}+
 \left(
 \sum_{\substack{0\le n<N\\ n\ \mathrm{odd}}}\lambda_nw^2,
 \sum_{\substack{0\le n<N\\ n\ \mathrm{even}}}\lambda_nz^2
 \right){+O(\|(z,w)\|^3).}
$$
{Because all $\lambda_n$ are positive, convergence of $\Phi_N$ on a neighborhood of the origin forces convergence of both quadratic coefficient sums, equivalently} 
\begin{equation}\label{eq:summability-intro}
 \sum_{n=0}^{\infty}\lambda_n<\infty.
\end{equation}
Throughout this section we assume that the sum in \eqref{eq:summability-intro} is finite. The following theorem shows that this condition is also sufficient for the convergence of $(\Phi_n)$ on some neighborhood of the origin.

\begin{theorem}\label{thm:summable-core}
The maps $\Phi_n^{-1}$ converge locally uniformly on $\C^2$ to an injective entire map 
$\Psi:\C^2\longrightarrow\C^2$. Its image $\Ucal:=\Psi(\C^2)$ is a Fatou--Bieberbach domain contained in $\Omega$, and
$$
 \Phi_n\longrightarrow\Psi^{-1}
$$
locally uniformly on $\Ucal$.
\end{theorem}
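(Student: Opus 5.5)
We would start from $\Phi_n=\sigma_{n-1}\circ\cdots\circ\sigma_0$, which gives $\Phi_n^{-1}=\sigma_0^{-1}\circ\cdots\circ\sigma_{n-1}^{-1}$. Consequently
$$\Phi_{n+1}^{-1}=\Phi_n^{-1}\circ\sigma_n^{-1},$$
where $\sigma_n^{-1}$ is the shear $(z,w)\mapsto(z,w-\lambda_nz^2)$ or $(z,w)\mapsto(z-\lambda_nw^2,w)$. Each $\sigma_n^{-1}$ is an automorphism with Jacobian one, and $\sigma_n^{-1}=\operatorname{id}+O(\lambda_n)$ on compacts. The plan is to show first that the infinite right-composition converges. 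Fix $R>0$. On $\B_R$ we have $\|\sigma_n^{-1}(x)-x\|\le\lambda_n\|x\|^2$. Iterating this from the inside out, the points $x_k:=\sigma_k^{-1}\circ\cdots\circ\sigma_{n-1}^{-1}(x)$, taken for $k$ from $n-1$ down to $0$, obey the same kind of inequality as in the proof of \Cref{lem:inner-radius}:
$$1/\|x_k\|\ge 1/\|x_{k+1}\|-\lambda_k.$$
Hence, once $\sum_{j\ge n_0}\lambda_j<1/(2R)$, the tails $\sigma_{n_0}^{-1}\circ\cdots\circ\sigma_{n-1}^{-1}$ map $\B_R$ into $\B_{2R}$, uniformly in $n$.

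Next we compare successive tails. Since
$$T_{n_0,n+1}:=\sigma_{n_0}^{-1}\circ\cdots\circ\sigma_n^{-1}=T_{n_0,n}\circ\sigma_n^{-1},$$
and the $T_{n_0,n}$ are uniformly bounded on $\B_{3R}$, the Cauchy estimates give them a uniform Lipschitz constant $C_R$ on $\B_{2R}$. Therefore
$$\|T_{n_0,n+1}-T_{n_0,n}\|\le C_R\lambda_n(2R)^2\quad\text{on }\B_R.$$
Summability of $(\lambda_n)$ makes $(T_{n_0,n})_n$ uniformly Cauchy on $\B_R$. Composing with the fixed automorphism $\Phi_{n_0}^{-1}$, we conclude that $\Phi_n^{-1}\to\Psi$ locally uniformly on $\C^2$.

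The limit has Jacobian determinant $\equiv1$, so it is nondegenerate, and Hurwitz's theorem (a limit of injective maps with nonvanishing Jacobian) shows that $\Psi$ is injective. Thus $\Ucal=\Psi(\C^2)$ is biholomorphic to $\C^2$.

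For $\Ucal\subset\Omega$, take $x\in\C^2$ and $p=\Psi(x)$. The key identity is
$$\Phi_n(\Psi(x))=\lim_{N}\Phi_n\Phi_N^{-1}(x)=\lim_N\sigma_n^{-1}\circ\cdots\circ\sigma_{N-1}^{-1}(x)=\lim_N T_{n,N}(x).$$
By the tail bound this lies in $\B_{2\|x\|}$ for all large $n$. Then $\|F(n)(p)\|=P_n\|\Phi_n(p)\|\le 2P_n\|x\|\to0$, so $p\in\Omega$. Since $T_{n,N}\to\operatorname{id}$ locally uniformly as $n\to\infty$, uniformly in $N$, we get $\Phi_n\circ\Psi\to\operatorname{id}$ locally uniformly on $\C^2$. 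To pass to $\Phi_n\to\Psi^{-1}$ locally uniformly on $\Ucal$, take a compact $K\subset\Ucal$. Then $\Psi^{-1}(K)$ is compact, and $\Phi_n=(\Phi_n\circ\Psi)\circ\Psi^{-1}$ on $K$, which gives the claim.

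Finally, $\Ucal\neq\C^2$ because $\Ucal\subset\Omega\ne\C^2$ by \Cref{prop:basic}, so $\Ucal$ is a Fatou--Bieberbach domain. The main obstacle is the uniform control of the infinite tail compositions. The rest (Hurwitz, and the identity $\Phi_n\circ\Psi=\lim T_{n,N}$) is routine once the reciprocal-norm inequality gives a priori bounds on $T_{n,N}$ that are uniform in $N$ and whose deviation from the identity tends to $0$ in $n$.
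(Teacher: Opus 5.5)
Your proposal is correct and follows essentially the paper's route: tail compositions of the inverse shears controlled by the reciprocal-norm inequality, a uniform Lipschitz bound giving a telescoping Cauchy estimate, Hurwitz for injectivity, and $F(n)=P_nS^n\Phi_n$ for $\Ucal\subset\Omega$. The differences are minor. The paper gets the Lipschitz bound from the chain rule, $\prod_j(1+8R\lambda_j)\le e^{8R\sum_j\lambda_j}$, rather than from Cauchy estimates, and your identity $\Phi_n\circ\Psi=\lim_N T_{n,N}\to\operatorname{id}$ spells out the paper's terse final step.

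One small slip: uniform boundedness of the tails on $\B_{3R}$ needs $n_0$ with $\sum_{j\ge n_0}\lambda_j<1/(6R)$ (so that $\B_{3R}$ is mapped into $\B_{6R}$), not $1/(2R)$. This is harmless since $R$ is arbitrary.
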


\begin{proof}
Write $\Psi_n:=\Phi_n^{-1}$. Fix $R>0$ and choose $N$ so large that
$$
 \sum_{n=N}^{\infty}\lambda_n<\frac1{4R}.
$$
For $m>N$ set
$$
 \mathcal R_{N,m}:=\sigma_N^{-1}\circ\cdots\circ\sigma_{m-1}^{-1},
$$
so $\Psi_m=\Psi_N\circ\mathcal R_{N,m}$. If $p_m=p\in\B_{2R}$ and $p_j=\sigma_j^{-1}(p_{j+1})$, then, with $u_j=\norm{p_j}$,
$$
 u_j\le u_{j+1}+\lambda_ju_{j+1}^2,
 \qquad
 \frac1{u_j}\ge\frac1{u_{j+1}}-\lambda_j.
$$
Hence 
$$\frac1{u_j}\ge\frac1{u_{m}}-\sum_{k=j}^{m-1}\lambda_k>\frac{1}{2R}-\frac{1}{4R}=\frac{1}{4R}.$$ 
Therefore $u_j<4R$ for $N\le j\le m$ and 
$$
 \sup_{\B_{4R}}\norm{D\sigma_j^{-1}-I}\le8R\lambda_j,
$$
so the maps $\mathcal R_{N,m}$ have uniformly bounded Lipschitz constants on $\B_{2R}$. Indeed,
 by the chain rule,
$$
\sup_{\mathbb B_{2R}}\|D\mathcal R_{N,m}\|
\leq
\prod_{j=N}^{m-1}(1+8R\lambda_j)
\leq
\exp\left(8R\sum_{j=N}^{\infty}\lambda_j\right)
=:C_R<\infty.
$$
Moreover,
$$
 \sup_{\B_R}\norm{\mathcal R_{N,m+1}-\mathcal R_{N,m}}
 \le C_RR^2\lambda_m.
$$
Hence $(\mathcal R_{N,m})$ is uniformly Cauchy on $\B_R$, and therefore $(\Psi_m)$ converges locally uniformly on $\C^2$. The limit $\Psi$ is nondegenerate because $D\Psi_n(0)=I$. By Hurwitz's theorem, $\Psi$ is injective, and we set $\Ucal=\Psi(\C^2)$. The convergence of the inverses gives $\Phi_n\to\Psi^{-1}$ locally uniformly on $\Ucal$.

If $p\in\Ucal$, then $(\Phi_n(p))$ is bounded. Since $F(n)(p)=P_nS^n\Phi_n(p)$
and $P_n\to0$, we obtain $F(n)(p)\to0$. Thus $\Ucal\subset\Omega$.
\end{proof}

\begin{proposition}\label{prop:bounded-core}
One has
$$
 \Ucal=\{p\in\Omega:\sup_n\norm{\Phi_n(p)}<\infty\}.
$$
\end{proposition}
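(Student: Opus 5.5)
We prove the two inclusions separately. The inclusion ``$\subset$'' is immediate from \Cref{thm:summable-core}. If $p\in\Ucal$, then $p\in\Omega$, and $\Phi_n(p)\to\Psi^{-1}(p)$. A convergent sequence is bounded, so $\sup_n\norm{\Phi_n(p)}<\infty$.

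For ``$\supset$'', fix $p\in\Omega$ with $M:=\sup_n\norm{\Phi_n(p)}<\infty$. We aim to show that $p=\Psi(x)$ for some limit point $x$ of $(\Phi_n(p))$. By compactness, choose a subsequence $n_k$ with $\Phi_{n_k}(p)\to x\in\overline{\B}_M$. We have $p=\Psi_{n_k}(\Phi_{n_k}(p))$, where $\Psi_n=\Phi_n^{-1}$. Since $\Psi_n\to\Psi$ locally uniformly on $\C^2$, the family $(\Psi_n)$ is equicontinuous on the compact set $\overline{\B}_{M+1}$. Hence
$$
\norm{\Psi_{n_k}(\Phi_{n_k}(p))-\Psi(x)}\le\norm{\Psi_{n_k}(\Phi_{n_k}(p))-\Psi_{n_k}(x)}+\norm{\Psi_{n_k}(x)-\Psi(x)}\longrightarrow0.
$$
Therefore $p=\Psi(x)\in\Ucal$.

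The only point needing care is equicontinuity. It follows from locally uniform convergence of holomorphic maps: the Cauchy estimates give uniform derivative bounds on $\overline{\B}_{M+1}$. Alternatively, one can use the explicit Lipschitz bound $C_R$ on the maps $\mathcal R_{N,m}$ from the proof of \Cref{thm:summable-core}, combined with $\Psi_m=\Psi_N\circ\mathcal R_{N,m}$. Membership $p\in\Omega$ is not actually used in this direction beyond stating the set. No real obstacle is expected.
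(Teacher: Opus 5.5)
Your proof is correct and follows essentially the same route as the paper: extract a subsequence $\Phi_{n_k}(p)\to x$ and use the locally uniform convergence $\Phi_n^{-1}\to\Psi$ to conclude $p=\Psi(x)$. The only difference is where the triangle inequality is split. The paper inserts $\Psi(\Phi_{n_j}(p))$, so it needs only uniform convergence on a ball plus continuity of the single map $\Psi$. Your split through $\Psi_{n_k}(x)$ instead requires equicontinuity of $(\Psi_n)$, which you correctly justify via Cauchy estimates.
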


\begin{proof}
The inclusion from left to right follows from \Cref{thm:summable-core}. Conversely, suppose $p\in\Omega$ and $(\Phi_n(p))$ is bounded. Choose a subsequence with $\Phi_{n_j}(p)\to x$. Since $\Phi_{n_j}^{-1}\to\Psi$ locally uniformly, we get

\begin{align*}
\|p-\Psi(x)\|
&=
\|\Phi_{n_j}^{-1}(\Phi_{n_j}(p))-\Psi(x)\|\\
&\leq
\sup_{\mathbb B_R}\|\Phi_{n_j}^{-1}-\Psi\|
+
\|\Psi(\Phi_{n_j}(p))-\Psi(x)\|
\longrightarrow0.
\end{align*}
Therefore $p=\Psi(x)\in\Ucal$.
\end{proof}

We have shown that $(\Phi_n(p))$ is bounded if and only if $p\in\Ucal$. We next establish lower bounds for $\|\Phi_n(p)\|$ when $p\in\Omega\setminus\Ucal$.

\begin{lemma}\label{lem:outside-core} Let $ L_n:=\sum_{j=n}^{\infty}\lambda_j.$
If $p\in\Omega$ and $L_n\|\Phi_n(p)\|<1$ for some $n$, then $p\in\Ucal$. Consequently,
$$
p\in\Omega\setminus\Ucal
\quad\Longrightarrow\quad
\|\Phi_n(p)\|\ge\frac1{L_n}
$$
for every sufficiently large $n$.
\end{lemma}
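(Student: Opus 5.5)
The plan is to show that once $L_n\|\Phi_n(p)\|<1$ at a single time $n$, the forward normalized orbit $\Phi_m(p)$, $m\ge n$, stays bounded. \Cref{prop:bounded-core} then places $p$ in $\Ucal$. The mechanism is the same reciprocal estimate used for the inverse shears in \Cref{lem:inner-radius} and \Cref{thm:summable-core}, now applied forward in time. Write $x_m:=\Phi_m(p)$, so that $x_{m+1}=\sigma_m(x_m)$. Put $u_m:=\|x_m\|$; it is convenient to use the max-norm, or to check the Euclidean constant, and I return to this point below. Since $\sigma_m$ adds $\lambda_m$ times the square of one coordinate to the other coordinate, we get
$$
u_{m+1}\le u_m+\lambda_m u_m^2 .
$$
Here $\Phi_m(p)$ is defined for every $m$ because $\Phi_m$ is a global automorphism; the hypothesis $p\in\Omega$ is needed only to invoke \Cref{prop:bounded-core}.

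The key step is the forward analogue of the reciprocal inequality. If $u_m>0$ and $\lambda_m u_m<1$, then
$$
\frac1{u_{m+1}}\ge\frac1{u_m(1+\lambda_m u_m)}\ge\frac1{u_m}(1-\lambda_m u_m)=\frac1{u_m}-\lambda_m .
$$
Iterating from $m=n$ gives
$$
\frac1{u_m}\ge\frac1{u_n}-\sum_{j=n}^{m-1}\lambda_j\ge\frac1{u_n}-L_n>0 .
$$
This iteration is legitimate by induction. At each stage $1/u_m>L_n\ge\lambda_m$, so $\lambda_m u_m<1$ and the next step can be taken. Consequently
$$
u_m\le\frac{u_n}{1-L_nu_n}
$$
for all $m\ge n$, which is a uniform bound. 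The finitely many terms with $m<n$ are trivially bounded. The degenerate case $u_m=0$ only occurs for $p=0$, which lies in $\Ucal$. Hence $\sup_m\|\Phi_m(p)\|<\infty$, and \Cref{prop:bounded-core} gives $p\in\Ucal$.

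The consequence is the contrapositive. If $p\in\Omega\setminus\Ucal$, then $L_n\|\Phi_n(p)\|\ge1$ for every $n$ with $L_n>0$, which holds for all large $n$ since all $\lambda_j>0$. That is exactly $\|\Phi_n(p)\|\ge 1/L_n$.

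The main obstacle is the norm inequality $\|\sigma_m(x)\|\le\|x\|+\lambda_m\|x\|^2$. In the max-norm it is immediate, since the shear changes only one coordinate and adds at most $\lambda_m\|x\|_\infty^2$ to it. In the Euclidean norm it follows from the triangle inequality, because the added vector $(0,\lambda_m z^2)$ has norm $\lambda_m|z|^2\le\lambda_m\|x\|^2$. This is the same inequality the paper already uses for $\sigma_j^{-1}$, so the Euclidean norm works directly. Should one prefer the max-norm, the boundedness criterion of \Cref{prop:bounded-core} is insensitive to the choice.
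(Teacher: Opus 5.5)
Your argument is correct and is essentially the paper's proof. The forward reciprocal estimate $1/u_{m+1}\ge 1/u_m-\lambda_m$ yields $u_m\le u_n/(1-L_nu_n)$ for all $m\ge n$, and \Cref{prop:bounded-core} then gives $p\in\Ucal$. Your induction ensuring $\lambda_mu_m<1$ is unnecessary, since $1/(1+x)\ge 1-x$ holds for every $x\ge0$. As written it also has a slip: what actually follows is $1/u_m>L_m\ge\lambda_m$, not $1/u_m>L_n$.
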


\begin{proof}
Set $u_j:=\|\Phi_j(p)\|$.
If $u_n=0$, then $\Phi_n(p)=0$, hence $p=0\in\Ucal$. Thus we may assume
$u_n>0$. Since $u_{j+1}\leq u_j+\lambda_j u_j^2$, it follows that
$$
\frac1{u_{j+1}}
\geq
\frac1{u_j+\lambda_j u_j^2}
\geq
\frac1{u_j}-\lambda_j.
$$
Hence
$$
\frac1{u_j}
\geq
\frac1{u_n}-
\sum_{\ell=n}^{j-1}\lambda_\ell
\geq
\frac1{u_n}-L_n.
$$
By assumption, $L_nu_n<1$, so the right-hand side is positive. Hence
$$
u_j
\leq
\frac{u_n}{1-L_nu_n}
$$
for all $j\geq n$. Thus $(\Phi_j(p))$ is bounded, and
\Cref{prop:bounded-core} implies that $p\in\Ucal$.
\end{proof}

\subsection{Capture of the full basin}\label{sec:capture} The aim of this subsection is to prove Theorem \ref{thm:psh-capture}, which is a corollary of the more general Theorem \ref{thm:alpha-capture}. Throughout this subsection we assume that 
$$\liminf_{n\to\infty}\rho_n=0.$$
Under the summability assumption \eqref{eq:summability-intro}, \Cref{thm:summable-core} provides a Fatou--Bieberbach domain $\Ucal\subset\Omega$. We now compare $\Ucal$ with the full basin $\Omega$. 

Recall the notation introduced in the introduction
$$
 \alpha_n:=\frac{\log(1/L_n)}{\log(1/\lambda_n)}.
$$
Since for all sufficiently large $n$ we have $0<\lambda_n<L_n<1$, it follows that $0<\alpha_n<1$. We first construct the plurisubharmonic function used in the capture argument.

\begin{lemma}\label{lem:psh-separator}
{Let $(K_j)$ be a compact exhaustion of $\Omega$ and let $(n_j)$ be an increasing sequence satisfying $\sup_{K_j}\|J_{n_j}\|\le1$, as constructed in the proof of \Cref{prop:calibrated-collapse}.} Then
$$
 G:=\left(\limsup_{j\to\infty}
 \frac{\log^+\norm{\Phi_{n_j}}}{\log(1/\lambda_{n_j})}\right)^*
$$
is plurisubharmonic on $\Omega$, satisfies $0\le G\le1$, and in particular $G|_\Ucal=0$. If $p\in\Omega\setminus\Ucal$, then
$$
 G(p)\ge\limsup_{j\to\infty}\alpha_{n_j}.
$$
\end{lemma}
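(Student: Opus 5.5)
The plan is to show that the functions
$$
g_j:=\frac{\log^+\norm{\Phi_{n_j}}}{\log(1/\lambda_{n_j})}
$$
are plurisubharmonic and locally uniformly bounded above by $1$ on $\Omega$. Then the upper semicontinuous regularization of their $\limsup$ is plurisubharmonic; the general fact used is that for a family of psh functions locally bounded above, $(\limsup_j g_j)^*$ is psh. Each $\log\norm{\Phi_{n_j}}$ is psh because $\Phi_{n_j}$ is holomorphic, so $\log^+\norm{\Phi_{n_j}}=\max\{\log\norm{\Phi_{n_j}},0\}$ is psh. Dividing by the positive constant $\log(1/\lambda_{n_j})$ keeps it psh for large $j$, since $\lambda_n\to0$ by summability. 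Clearly $g_j\ge0$, so $G\ge0$.

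For the upper bound we use the identity $J_n=\lambda_nS^n\Phi_n$, which gives $\norm{\Phi_{n_j}}=\norm{J_{n_j}}/\lambda_{n_j}$. On $K_j$ we have $\norm{J_{n_j}}\le1$, hence $\norm{\Phi_{n_j}}\le1/\lambda_{n_j}$ there. Since $\lambda_{n_j}<1$, this gives $g_j\le1$ on $K_j$. Any compact set in $\Omega$ lies in the interior of some $K_{j_0}$, and therefore in all $K_j$ with $j\ge j_0$ because the exhaustion is increasing. So $\limsup g_j\le1$ pointwise, and the family $\{g_j\}_{j\ge j_0}$ is uniformly bounded by $1$ on every compact set. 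Hence $G\le1$, since the regularization of a function bounded by $1$ is still bounded by $1$. The one point to be careful about is to ensure the exhaustion satisfies $K_j\subset\operatorname{int}K_{j+1}$; this gives local uniform boundedness, which is what we need for $G$ to be psh and not merely upper semicontinuous.

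On $\Ucal$, \Cref{thm:summable-core} gives $\Phi_n\to\Psi^{-1}$ locally uniformly. So on any compact $K\subset\Ucal$ the norms $\norm{\Phi_{n_j}}$ are uniformly bounded by some $M_K$, and $g_j\le\log^+M_K/\log(1/\lambda_{n_j})\to0$ uniformly on $K$. Since $\Ucal$ is open, the $\limsup$ vanishes on a neighborhood of each point of $\Ucal$, and therefore its regularization also vanishes there. This gives $G|_\Ucal=0$.

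For $p\in\Omega\setminus\Ucal$ we use \Cref{lem:outside-core}, which gives $\norm{\Phi_n(p)}\ge1/L_n$ for all large $n$. Since $L_n<1$ eventually, we get $\log^+\norm{\Phi_{n_j}(p)}\ge\log(1/L_{n_j})$, and hence $g_j(p)\ge\alpha_{n_j}$. Taking $\limsup$ and using that the regularization dominates the function, we obtain $G(p)\ge\limsup_j\alpha_{n_j}$. No step here is a serious obstacle; the only delicate part is the local uniform boundedness needed for the regularized $\limsup$ to be psh, which the increasing compact exhaustion provides.
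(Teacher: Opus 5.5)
Your proof is correct and follows the same route as the paper. You bound $\norm{\Phi_{n_j}}\le1/\lambda_{n_j}$ on $K_j$ via $J_n=\lambda_nS^n\Phi_n$, which gives local uniform bounds $0\le g_j\le1$. You get vanishing on $\Ucal$ from the local uniform convergence in \Cref{thm:summable-core}, and the lower bound off $\Ucal$ from \Cref{lem:outside-core}. You are just more explicit than the paper about needing $K_j\subset\operatorname{int}K_{j+1}$ and about why the regularized $\limsup$ is plurisubharmonic.
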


\begin{proof}
Set
$$g_j:=\frac{\log^+\norm{\Phi_{n_j}}}{\log(1/\lambda_{n_j})}.$$
On $K_j$, the bound $\norm{J_{n_j}}\le1$ gives  $\norm{\Phi_{n_j}}\le\frac1{\lambda_{n_j}}$,
so the functions $g_j$ are locally uniformly bounded between $0$ and $1$. They converge locally uniformly to zero on $\Ucal$ by \Cref{thm:summable-core}. Hence $G$ is plurisubharmonic, takes values in $[0,1]$, and vanishes on $\Ucal$.

If $p\notin\Ucal$, then \Cref{lem:outside-core} gives $\norm{\Phi_{n_j}(p)}\ge\frac1{L_{n_j}}$ for all sufficiently large $j$ and therefore
$$
g_j(p)\geq\alpha_{n_j}.
$$
for all sufficiently large $j$, proving the assertion.
\end{proof}

If, in addition, $G>\delta>0$ on $\Omega\setminus\Ucal$, then the following lemma shows that $\Omega=\Ucal$.

\begin{lemma}\label{lem:psh-gap}
Let $D\subset\C^k$ be connected and let $u$ be bounded above and
plurisubharmonic on $D$. Suppose that $V\subset D$ is a nonempty open
set and that there are real numbers $a<b$ such that
$$
u=a\quad\text{on }V,
\qquad
u\geq b\quad\text{on }D\setminus V.
$$
Then $V=D$.
\end{lemma}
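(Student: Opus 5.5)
The plan is to use the connectedness of $D$: show that $V$ is relatively closed in $D$. Since $V$ is open and nonempty, this forces $V=D$. The natural tool is the upper semicontinuity of $u$, together with the sub-mean value property for plurisubharmonic functions, restricted to small complex discs or balls.

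\emph{Step 1: set-up.} Argue by contradiction and suppose $V\neq D$. Connectedness then gives a boundary point $x\in\partial V\cap D$. It cannot lie in $V$, because $V$ is open, so $x\in D\setminus V$ and therefore $u(x)\ge b$.

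\emph{Step 2: where the contradiction comes from.} Upper semicontinuity at $x$ gives $u(x)\ge\limsup_{y\to x}u(y)$. This is the wrong direction for using the values $u=a$ on nearby points of $V$, so it alone gives nothing. Instead I would use the sub-mean value inequality on a small ball $\B(x,\varepsilon)\subset D$:
$$b\le u(x)\le \frac{1}{|\B(x,\varepsilon)|}\int_{\B(x,\varepsilon)}u .$$
The ball splits into the part inside $V$, where $u=a$, and the part outside $V$, where $u\ge b$. This is consistent with the mean being at least $b$ only if the part outside $V$ carries large values of $u$. So a pure mean-value argument at one point does not immediately close the argument either.

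\emph{Step 3: the main step, via a gap function.} The intended fix is to apply the maximum principle to a modified function. Define $v:=\max\{u,(a+b)/2\}$. This $v$ is plurisubharmonic, equals $(a+b)/2$ on $V$, and equals $u\ge b$ off $V$. The key claim I would aim for is that $\{u\ge b\}=\{u>(a+b)/2\}$, and that this set is the open set $D\setminus \overline V$ up to the boundary. The main obstacle is that the superlevel set $\{u\ge b\}$ of an upper semicontinuous function is closed but not necessarily open. Precisely this closedness gives that $D\setminus V=\{u\ge b\}$ is relatively closed in $D$. Openness of $D\setminus V$ is what is needed, and it follows once we know $\{u\ge b\}$ contains no point of $\overline V$.

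To get this I would use the standard fact that a plurisubharmonic function satisfies $u(x)=\limsup_{y\to x,\,y\ne x}u(y)$; in fact the essential limsup over any set of positive measure near $x$ suffices, by the sub-mean value property. Apply this at the boundary point $x$ from Step 1. Near $x$, the function takes only values equal to $a$ (on $V$) or at least $b$ (off $V$). The function $w:=\max\{u,a\}$ is also plurisubharmonic and takes values in $\{a\}\cup[b,\infty)$. I would then show that such a plurisubharmonic function cannot take the value $a$ on an open set adjacent to a point where it is at least $b$. The tool is a one-variable argument on complex lines through $x$ meeting $V$, where subharmonic functions with a gap in their range must be constant on connected sets. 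The standard proof of that fact uses that $\{w<(a+b)/2\}=\{w=a\}$ is open (by upper semicontinuity) and that $\{w\ge b\}$ is open too. For the latter I would argue that $w=\lim_{r\to0}$ of its ball averages, and that these averages vary continuously and cannot jump across the gap $(a,b)$. Consequently both pieces are open, connectedness of $D$ gives $V=D$, and the contradiction completes the proof.
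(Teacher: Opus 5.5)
There is a genuine gap. The step that carries the whole content of the lemma is asserted but never proved: that $D\setminus V=\{u\ge b\}$ is open, or equivalently that no point $p\in\partial V\cap D$ can have $u(p)\ge b$. Your justification in Step~3 does not work. For fixed $r$, the ball average $A_rw(y)$ is continuous in $y$ where it is defined. It equals $a$ at points of $V$ farther than $r$ from $\partial V$, and it is at least $w(y)\ge b$ at points of $D\setminus V$ by the sub-mean value property. By continuity it therefore \emph{does} take every value in $(a,b)$, so the averages do cross the gap. Knowing $w=\lim_{r\to0}A_rw=\inf_rA_rw$ only shows that $\{w>(a+b)/2\}=\bigcap_r\{A_rw>(a+b)/2\}$ is a $G_\delta$ set, not that it is open. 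Passing to complex lines merely reduces to the same statement in one variable. The auxiliary functions add nothing, since $u\ge a$ everywhere gives $\max\{u,a\}=u$. Your claim that an essential $\limsup$ over any set of positive measure recovers $u(x)$ is also not correct as stated, because such sets can be thin at $x$. What the sub-mean value property gives directly is control through sets of positive \emph{density} at $x$.

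The missing idea is the one you discarded in Step~2: combine the sub-mean value inequality with the upper bound from upper semicontinuity. If $u(p)\ge b$, then $u\le u(p)+\varepsilon$ on $\B(p,r)$ for small $r$, so the ``large values outside $V$'' that worried you cannot occur. You also need $V$ to occupy a fixed positive proportion of $\B(p,r)$, and this fails at some boundary points, for example at the tip of a thin cusp of $V$. The paper therefore picks $q\in V$ with $R:=\operatorname{dist}(q,\partial V)<\operatorname{dist}(q,\partial D)$ and $p\in\partial V$ with $\|p-q\|=R$. Then $\B(q,R)\subset V$, and the proportion $\vartheta_r$ of $\B(p,r)$ lying in $\B(q,R)$ tends to $1/2$. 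The sub-mean value inequality gives $u(p)\le\vartheta_ra+(1-\vartheta_r)(u(p)+\varepsilon)$, that is, $u(p)-a\le\frac{1-\vartheta_r}{\vartheta_r}\varepsilon$. Letting $\varepsilon\to0$ yields $u(p)\le a<b\le u(p)$, a contradiction.
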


\begin{proof}

Suppose that $V\neq D$. Since $D$ is connected, $bV\cap D\neq\varnothing$. Choose $q\in V$ sufficiently close to $bV$ that
$$
R:=\operatorname{dist}(q,b V)<\operatorname{dist}(q,b D).
$$
Choose $p\in b V$ with $\|p-q\|=R$. Then $\B(q,R)\subset V$, $\overline{\B}(q,R)\subset D$, and $p\in b\B(q,R)$.

Let
$$
\vartheta_r=\frac{\operatorname{vol}(\B(p,r)\cap \B(q,R))}
     {\operatorname{vol}(\B(p,r))}
$$
Observe that $\vartheta_r\rightarrow\frac1{2}$ as $r\rightarrow 0$. Since $u$ is upper semicontinuous, for every
$\varepsilon>0$ we may choose $r>0$ sufficiently small so that
$$
u\leq u(p)+\varepsilon
\qquad\text{on }\B(p,r).
$$
On $\B(p,r)\cap \B(q,R)$ we have $u=a$. Hence the sub-mean value property gives
$$
u(p)
\leq
\vartheta_r a+(1-\vartheta_r)(u(p)+\varepsilon).
$$
Therefore
$$
u(p)-a
\leq
\frac{1-\vartheta_r}{\vartheta_r}\varepsilon.
$$
Letting $\varepsilon\downarrow0$ yields $u(p)\leq a$. Finally, since $p\in D\setminus V$ we have $u(p)\geq b>a$, which is a contradiction. 
\end{proof}

We are now ready to prove \Cref{thm:psh-capture}.

\begin{proof}[Proof of \Cref{thm:psh-capture}]
Put $\delta=\liminf_{n\to\infty}\alpha_n>0$. \Cref{lem:psh-separator} gives $G\in\PSH(\Omega)$ with
$$
0\le G\le1,\qquad G|_{\Ucal}=0,\qquad
G\ge\delta\quad\text{on }\Omega\setminus\Ucal.
$$
The separation lemma, \Cref{lem:psh-gap}, implies $\Omega=\Ucal$.
\end{proof}

\medskip

Next we observe that \Cref{lem:psh-separator} suggests that the condition in \Cref{thm:psh-capture} can be weakened further. Define
$$
 T(N):=\min\{m>N:\rho_m\le2^{-N-1}\}.
$$
Then the proof of \Cref{lem:reset} shows that whenever $2\sup_K\norm{F(N)}<c$, there is some $n\in[N,T(N))$ with $ \sup_K\norm{F(n)}\le a_{n+1}$.
Finally we define
$$
 \alpha_N^*:=\min_{N\le n<T(N)}\alpha_n,
 \qquad
 \alpha_*:=\limsup_{N\to\infty}\alpha_N^*.
$$
The following result improves  \Cref{thm:psh-capture}.

\begin{theorem}\label{thm:alpha-capture}
If $\alpha_*>0$, then $\Omega=\Ucal\simeq\C^2$.
\end{theorem}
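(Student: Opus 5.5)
We follow the proof of \Cref{thm:psh-capture}, but choose the times $n_j$ more carefully than \Cref{prop:calibrated-collapse} does. The goal is a plurisubharmonic function $G$ on $\Omega$ with
$$
0\le G\le1,\qquad G|_{\Ucal}=0,\qquad G\ge\delta>0\ \text{on }\Omega\setminus\Ucal,
$$
and then \Cref{lem:psh-gap} gives $\Omega=\Ucal$. By \Cref{thm:summable-core}, $\Ucal$ is a Fatou--Bieberbach domain, so this yields $\Omega=\Ucal\simeq\C^2$. (Here $\Ucal$ is nonempty and open, and $\Omega$ is connected by \Cref{prop:basic}.)

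\emph{Choice of times.} Put $\delta:=\alpha_*/2$. By the definition of $\alpha_*$ there are infinitely many $N$ with $\alpha_N^*>\delta$. Take a compact exhaustion $(K_j)$ of $\Omega$. Since $F(N)\to0$ uniformly on each $K_j$ (\Cref{prop:basic}), we can choose inductively an increasing sequence $N_j$ such that
$$
\alpha_{N_j}^*>\delta,\qquad 2\sup_{K_j}\norm{F(N_j)}<c,\qquad N_j>T(N_{j-1}).
$$
The remark preceding the theorem (the refined form of \Cref{lem:reset}) then gives some $n_j\in[N_j,T(N_j))$ with $\sup_{K_j}\norm{F(n_j)}\le a_{n_j+1}$, that is, $\sup_{K_j}\norm{J_{n_j}}\le1$. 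Because $N_{j+1}>T(N_j)>n_j$, the sequence $(n_j)$ is increasing. Moreover $\alpha_{n_j}\ge\alpha^*_{N_j}>\delta$, since $n_j$ lies in the window over which $\alpha^*_{N_j}$ is the minimum.

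\emph{Conclusion.} The sequence $(n_j)$ has exactly the properties required in \Cref{lem:psh-separator}. That lemma produces $G\in\PSH(\Omega)$ with $0\le G\le1$ and $G|_{\Ucal}=0$, and for $p\in\Omega\setminus\Ucal$ it gives
$$
G(p)\ge\limsup_{j\to\infty}\alpha_{n_j}\ge\delta.
$$
Applying \Cref{lem:psh-gap} with $a=0$, $b=\delta$ and $V=\Ucal$ gives $\Omega=\Ucal$.

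\emph{Points to check.} First, the reset argument in \Cref{lem:reset} does yield a good index inside $[N,T(N))$. In that proof one takes $m=T(N)$, which is legitimate because $\rho_m\le2^{-N-1}$ is exactly what the argument uses. This gives some $n$ with $N\le n<m$ and $\delta_n\le a_{n+1}$. Second, $T(N)$ is finite for every $N$, because $\liminf\rho_n=0$. Third, $\alpha_n$ is defined, with $0<\lambda_n<L_n<1$, for all $n\ge N_j$ once $N_j$ is large. I expect the first point to be the main obstacle, since it is where the remark before the theorem has to be verified against the actual proof of \Cref{lem:reset}. The rest is routine bookkeeping.
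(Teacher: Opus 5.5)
Your proposal is correct and is essentially the paper's proof. Like the paper, you pick windows $[N_j,T(N_j))$ with $N_j>T(N_{j-1})$, $2\sup_{K_j}\norm{F(N_j)}<c$ and $\alpha^*_{N_j}$ bounded below, take reset times $n_j$ in those windows so that $\alpha_{n_j}\ge\alpha^*_{N_j}$, and conclude via \Cref{lem:psh-separator} and \Cref{lem:psh-gap}. The only difference is cosmetic: you require $\alpha^*_{N_j}>\alpha_*/2$ directly, whereas the paper takes $\alpha^*_{N_j}\to\alpha_*$ and then uses the bound $\alpha_*/2$.
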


\begin{proof}
Let $K_1\Subset K_2\Subset\cdots\Subset\Omega$ be a compact exhaustion. Choose an increasing sequence $(N_j)$ such that $\alpha_{N_j}^*\to\alpha_*$. By passing to a subsequence we may assume
$$
 N_j>T(N_{j-1}),
 \qquad
 2\sup_{K_j}\norm{F(N_j)}<c.
$$
For each $j$, the reset-window statement gives $n_j\in[N_j,T(N_j))$ such that
$$
 \sup_{K_j}\norm{J_{n_j}}\le1.
$$
The sequence $(n_j)$ is increasing and  $\alpha_{n_j}\ge\alpha_{N_j}^*$.
Let $G$ be the function from \Cref{lem:psh-separator}. If $\Omega\setminus\Ucal$ were nonempty, then 
$$
 G|_{\Ucal}=0,
 \qquad
 G|_{\Omega\setminus\Ucal}\ge\frac{\alpha_*}{2}>0.
$$
This contradicts \Cref{lem:psh-gap} and therefore $\Omega=\Ucal$.
\end{proof}

Finally, we show that these results imply part $(ii)$ of Theorem \ref{thm:first-classification}.

\begin{proposition}\label{prop:positive-defect}
If
$$
 \liminf_{n\to\infty}d_n>0,
$$
then $\sum_n\lambda_n<\infty$ and $\alpha_n\to1$.
\end{proposition}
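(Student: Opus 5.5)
We work directly from the identity $\lambda_n/\lambda_{n-1}=c^{d_n}$ recorded in \Cref{sec:normalization}. Put $\mu:=\log(1/c)>0$. If $\liminf d_n>0$, choose $\delta>0$ and $N$ with $d_n\ge\delta$ for all $n\ge N$. Then $\lambda_n\le c^{\delta}\lambda_{n-1}$ for $n\ge N$, so $\lambda_n\le C\gamma^n$ with $\gamma:=c^\delta<1$. This gives summability of $\sum_n\lambda_n$ at once, and in particular $\lambda_n\to0$.

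For $\alpha_n\to1$ we will compare $L_n$ with $\lambda_n$. The geometric bound applied from index $n$ onward gives, for $n\ge N$,
$$
\lambda_n\le L_n=\sum_{j\ge n}\lambda_j\le\lambda_n\sum_{k\ge0}\gamma^k=\frac{\lambda_n}{1-\gamma}.
$$
Hence $\log(1/L_n)\ge\log(1/\lambda_n)-\log\frac1{1-\gamma}$. Since $\lambda_n\to0$, we have $\log(1/\lambda_n)\to\infty$, and therefore
$$
\alpha_n=\frac{\log(1/L_n)}{\log(1/\lambda_n)}\ge1-\frac{\log\frac{1}{1-\gamma}}{\log(1/\lambda_n)}\longrightarrow1.
$$
Combined with $\alpha_n<1$ for large $n$ (already noted in the paper), this yields $\alpha_n\to1$.

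No step is a genuine obstacle; the argument rests on the ratio identity together with the fact that $\lambda_n\to0$. The one point to check is that $L_n<1$ and $\lambda_n<L_n$ hold for large $n$, so that $\alpha_n$ is defined. This is immediate, since all $\lambda_j>0$ and the tail sum $L_n$ tends to zero.
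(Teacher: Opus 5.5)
Your proof is correct and follows the paper's argument essentially verbatim. You use geometric decay of $\lambda_n$ from the ratio identity $\lambda_n/\lambda_{n-1}=c^{d_n}$, then the sandwich $\lambda_n\le L_n\le\lambda_n/(1-\gamma)$, and conclude because $\log(L_n/\lambda_n)$ stays bounded while $\log(1/\lambda_n)\to\infty$.
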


\begin{proof}
Choose $D>0$ such that $d_n\ge D$ for all sufficiently large $n$ and observe that 
$$
 \frac{\lambda_n}{\lambda_{n-1}}=c^{d_n}\le c^D=:r<1.
$$
 Hence $\lambda_n$ decays geometrically and is summable. Moreover
$$
 \lambda_n\le L_n\le\frac{\lambda_n}{1-r}
$$
for all sufficiently large $n$. Therefore
$$
 0\le\log\frac{L_n}{\lambda_n}\le\log\frac1{1-r},
$$
while $\log(1/\lambda_n)\to\infty$. It follows that
$$
 \alpha_n
 =1-\frac{\log(L_n/\lambda_n)}{\log(1/\lambda_n)}
 \longrightarrow1.
$$
\end{proof}

\begin{corollary} If $\liminf\rho_n=0$ and $\liminf d_n>0$, then $\Omega\simeq\C^2$.
\end{corollary}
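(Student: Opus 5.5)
The corollary follows by combining \Cref{prop:positive-defect} with \Cref{thm:psh-capture}, which reduces to \Cref{thm:alpha-capture}. Assume $\liminf\rho_n=0$ and $\liminf d_n>0$.

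First I apply \Cref{prop:positive-defect}. The hypothesis $\liminf d_n>0$ gives that $\sum_n\lambda_n<\infty$ and that $\alpha_n\to1$. In particular $\liminf\alpha_n=1>0$. The only point to check is that $\alpha_n$ is well defined, which requires $0<\lambda_n<L_n<1$ for large $n$. This holds because $\lambda_n$ decays geometrically, so $L_n\to0$, and $L_n>\lambda_n$ since all $\lambda_j$ are positive.

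Next, the summability places us in the setting of \Cref{thm:summable-core}. That theorem yields the Fatou--Bieberbach domain $\Ucal=\Psi(\C^2)\subset\Omega$. Together with $\liminf\rho_n=0$, all hypotheses of \Cref{thm:psh-capture} are then satisfied. That theorem gives $\Omega=\Ucal$, and since $\Ucal\simeq\C^2$ via $\Psi$, we get $\Omega\simeq\C^2$.

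There is no substantive obstacle here, because the heavy lifting is already contained in \Cref{lem:psh-separator} and \Cref{lem:psh-gap}. The one step worth verifying explicitly is the ratio identity $\lambda_n/\lambda_{n-1}=c^{d_n}$, on which the geometric decay rests. It follows from $\lambda_n=P_n/a_{n+1}$:
\[
\frac{\lambda_n}{\lambda_{n-1}}=\frac{a_n\,a_n}{a_{n+1}}=c^{2q_n-q_{n+1}}=c^{d_n}.
\]
This identity is used inside the proof of \Cref{prop:positive-defect}.
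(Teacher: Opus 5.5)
Your proposal is correct and matches the paper's proof: \Cref{prop:positive-defect} gives $\sum_n\lambda_n<\infty$ and $\alpha_n\to1$, and together with $\liminf\rho_n=0$ this lets you apply \Cref{thm:psh-capture}. Your extra checks, that $0<\lambda_n<L_n<1$ for large $n$ and that $\lambda_n/\lambda_{n-1}=c^{d_n}$, are accurate and fill in details the paper leaves implicit.
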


\begin{proof}
By \Cref{prop:positive-defect} we have $\alpha_n\to1$, so we can apply \Cref{thm:psh-capture}.
\end{proof}

\section{Failure of derivative-normalized capture}\label{sec:proper-core}

{When $\alpha_*>0$, \Cref{thm:alpha-capture} gives $\Omega=\Ucal$. We show that equality can fail when $\alpha_*=0$, even if $q_{n+1}/q_n\to2$. By \Cref{lem:outside-core}, $\|\Phi_n(p)\|\geq L_n^{-1}$ for $p\in\Omega\setminus\Ucal$. This motivates the rescaling} 
$$K_n:=L_n\Phi_n,$$
{for which } 
\begin{equation}\label{eq:core}
\{p\in\Omega:K_n(p)\to0\}=\Ucal.
\end{equation}
Next we define
$$
G_n(z,w)=
\begin{cases}
((1-r_n)z,(1-r_n)(w+r_nz^2)),&n\text{ even},\\
((1-r_n)(z+r_nw^2),(1-r_n)w),&n\text{ odd}.
\end{cases}
$$
{where $r_n:=\lambda_n/L_n$ satisfies $0<r_n<1$. Thus} 
$$K_{n+1}=G_n\circ K_n.$$

When $r_n$ is small, the maps $G_n$ are close to the identity. Pairing two consecutive maps $G_n$ reveals the vector field governing their dynamics.
Put
$$
h_m:=r_{2m}+r_{2m+1},
\qquad
\Gamma_m:=G_{2m+1}\circ G_{2m}.
$$ 
and observe that
\begin{equation}\label{eq:pair-flow-expansion}
\Gamma_m=\operatorname{id}+h_mV+R_m,
\qquad
V(z,w)=\left(-z+\frac12w^2,-w+\frac12z^2\right),
\end{equation}

For every compact $K\subset\C^2$, direct expansion of the polynomial maps gives the supremum-norm estimate
\begin{equation}\label{eq:pair-error}
\sup_{x\in K}\norm{R_m(x)}
\le C_K\bigl(h_m^2+\abs{r_{2m+1}-r_{2m}}\bigr).
\end{equation}

The vector field $V$ has three nonzero {equilibrium points}
$$
p_\omega=(2\omega,2\omega^2),\qquad \omega^3=1,
$$
and $\operatorname{spec}DV(p_\omega)=\{1,-3\}$.

\subsection{Stable disks for the rescaled dynamics}

{We prove the following version of the non-autonomous stable-manifold argument. }

%%%%%%%%%%%%%%%%%%%%%%%%%%%%%%%%%%%%%%%%%%
\begin{lemma}\label{lem:asymptotic-flow-stable} Let $p_\omega$ be the {equilibrium point} of the vector field $V$ and let $D$ be {a} neighborhood of $p_\omega$, on which \eqref{eq:pair-error} holds. If
$$
h_m>0,\qquad h_m\to0,\qquad \sum_m h_m=\infty,
$$
and, on every relatively compact neighborhood $D'\Subset D$ of $p_\omega$,
$$
\sum_m\left(h_m^2+{\sup_{x\in D'}\norm{R_m(x)}}\right)<\infty.
$$
Then, after discarding finitely many terms, there exists a local holomorphic disk $ W_\omega$ such that
$$
\Gamma_{m-1}\circ\cdots\circ\Gamma_M(x)\longrightarrow p_\omega
$$
locally uniformly for $x\in W_\omega$.
\end{lemma}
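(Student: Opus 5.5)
The plan is to straighten out the saddle at $p_\omega$ and run a graph-transform (Hadamard) argument adapted to the variable step sizes $h_m$. Since $\operatorname{spec}DV(p_\omega)=\{1,-3\}$, I would choose linear coordinates $x=p_\omega+\xi e_s+\eta e_u$, with $e_s,e_u$ the eigenvectors for $-3$ and $1$. By \eqref{eq:pair-flow-expansion}, on a small bidisk $\Delta_\varepsilon=\{|\xi|,|\eta|\le\varepsilon\}\Subset D$ the map $\Gamma_m$ reads
$$
\xi'=(1-3h_m)\xi+h_m\,O(|\xi|^2+|\eta|^2)+e^s_m,\qquad
\eta'=(1+h_m)\eta+h_m\,O(|\xi|^2+|\eta|^2)+e^u_m .
$$
Here $e_m=R_m$ is holomorphic. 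By hypothesis and the Cauchy estimates on a slightly smaller bidisk, both $\sup\norm{R_m}$ and $\sup\norm{DR_m}$ are summable. One point must be kept in mind: $p_\omega$ is \emph{not} fixed by $\Gamma_m$, since $\Gamma_m(p_\omega)-p_\omega=R_m(p_\omega)$. The argument therefore has to tolerate a summable drift. After discarding finitely many terms (choosing $M$), I may assume $h_m<\varepsilon^2$, $\sum_{m\ge M}(h_m^2+\sup_{\Delta_{2\varepsilon}}\norm{R_m})<\varepsilon^2$, and that the quadratic terms are dominated by $\varepsilon h_m$ on $\Delta_\varepsilon$.

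The first step is a cone estimate. On $\Delta_\varepsilon$, the derivative of $\Gamma_m$ maps the unstable cone $\{|d\xi|\le|d\eta|\}$ into itself and expands $|d\eta|$ by at least $1+h_m/2$ up to a summable error $\norm{DR_m}$. Dually, the inverse maps stable cones into stable cones. The second step constructs the disk. For each $n>M$, let $W^n$ be the set of $\xi$-graphs $\eta=\varphi_n(\xi)$, $|\xi|<\varepsilon$, consisting of points whose orbit $\Gamma_{k-1}\circ\cdots\circ\Gamma_M(x)$ stays in $\Delta_\varepsilon$ for $M\le k\le n$. I obtain $W^n$ by pulling back the horizontal disk $\{\eta=c\}$ at time $n$ through the inverse maps. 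The stable-cone invariance guarantees that each pullback is a graph with Lipschitz constant at most $1$, and holomorphic by the implicit function theorem. The family $\{\varphi_n\}$ is uniformly bounded, so Montel gives a convergent subsequence. Uniqueness of the limit then follows from the unstable expansion: two points of $\Delta_\varepsilon$ in the same $\xi$-fiber whose orbits both stay in the box up to time $n$ satisfy $|\eta_1-\eta_2|\lesssim\varepsilon\prod_{k=M}^{n}(1+h_k/2)^{-1}e^{\sum\norm{DR_k}}\to0$, because $\sum h_k=\infty$. Hence $\varphi_n\to\varphi$ locally uniformly, and I set $W_\omega=\{\eta=\varphi(\xi)\}$.

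The third step proves convergence to $p_\omega$. Along an orbit in $W_\omega$ the orbit stays in $\Delta_\varepsilon$ for all times, and the $\xi$-coordinate obeys $|\xi_{k+1}|\le(1-2h_k)|\xi_k|+Ch_k|\eta_k|^2+\norm{R_k}$. A discrete Gronwall argument combines divergent damping ($\prod(1-2h_k)\to0$) with a summable forcing term. It yields $|\xi_k|\le\prod_{j<k}(1-2h_j)\,\varepsilon+\sum_{j<k}\prod_{j<i<k}(1-2h_i)\norm{R_j}$, and both terms tend to $0$ (split the sum at a large index). To control $\eta_k$, I run the same estimate backwards in time from the invariance of the graph: $\eta$ is expanded, so staying bounded forces $|\eta_k|\lesssim \sup_{j\ge k}(|\xi_j|^2+\sum_{i\ge j}\norm{R_i})$. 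Together these give $\xi_k,\eta_k\to0$, uniformly for $x$ in compact subsets of $W_\omega$.

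I expect the main obstacle to be this last coupled estimate together with the drift. Because $p_\omega$ is not an exact fixed point, neither the graph $W^n$ nor the convergence can be obtained by a plain contraction mapping. The error has to be absorbed using only $\sum\norm{R_m}<\infty$ against the merely divergent, not geometric, rates $\sum h_m=\infty$. The bootstrap keeping orbits inside $\Delta_\varepsilon$ is where the smallness choices for $M$ and $\varepsilon$ must be made carefully.
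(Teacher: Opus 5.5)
Your second step fails as written, exactly at the drift you flagged, and taking $M$ large or $\varepsilon$ small does not repair it. To conclude that the pullback of $\{\eta=c\}$ is a graph over all of $\{|\xi|<\varepsilon\}$ whose orbit stays in the fixed box $\Delta_\varepsilon$, you need, at \emph{every} step, that $\Gamma_m$ maps $\Delta_\varepsilon$ into $\{|\xi'|<\varepsilon\}$ and pushes $\{|\eta|=\varepsilon\}$ out of $\{|\eta'|\le\varepsilon\}$. The available margins are about $3h_m\varepsilon$ and $h_m\varepsilon$, so you need $\sup_{\Delta_\varepsilon}\|R_m\|\ll\varepsilon h_m$ for all $m\ge M$. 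The hypotheses only give summability. Expanding $G_{2m+1}\circ G_{2m}$ gives
$$
R_m=\frac{r_{2m+1}-r_{2m}}{2}\,(w^2,-z^2)+O(h_m^2),\qquad R_m(p_\omega)=2(r_{2m+1}-r_{2m})(\omega,-\omega^2)+O(h_m^2).
$$
This is a shift along the stable eigenvector $(\omega,-\omega^2)$ of $DV(p_\omega)$. Since only $\sum_m|r_{2m+1}-r_{2m}|<\infty$ is assumed, $|r_{2m+1}-r_{2m}|$ may be a fixed fraction of $h_m$ for infinitely many $m$. In the example of Section~\ref{sec:proper-core} this happens whenever a pair $(2m,2m+1)$ straddles a transition between $\lambda$-blocks, where $r_n$ drops by an enormous factor. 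At such a step part of $\Delta_\varepsilon$ is pushed out through $\{|\xi|=\varepsilon\}$. Shrinking $\varepsilon$ only shrinks the margin, and enlarging $M$ does not change the ratio $\|R_m\|/h_m$. Your cone claim has the same defect. In the basis $e_s=(-\omega^2,1)$, $e_u=(\omega^2,1)$,
$$
D\Gamma_m(p_\omega)=\begin{pmatrix}1-3h_m&-2\delta_m\\ 2\delta_m&1+h_m\end{pmatrix}+O(h_m^2),\qquad \delta_m:=r_{2m+1}-r_{2m},
$$
so for $|\delta_m|\approx h_m$ the slope-one cone is only borderline invariant, and some of its vectors are contracted by $1-h_m$.

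The missing idea is to make the geometry time-dependent so that only the cumulative drift matters. Take $\xi$-radii $a_m=\varepsilon/2+\sum_{M\le j<m}\sup_{\Delta_\varepsilon}\|R_j\|$ and $\eta$-radii $b_m=\varepsilon/2+\sum_{j\ge m}\sup_{\Delta_\varepsilon}\|R_j\|$. Then points of $\{|\xi|\le a_m,|\eta|\le b_m\}$ satisfy $|\xi'|\le a_{m+1}$, and $|\eta|=b_m$ forces $|\eta'|>b_{m+1}$. Let the cone apertures grow by $O(\sup\|DR_m\|)$ per step. Since $\sum_{j\ge M}\sup\|R_j\|<\varepsilon^2$, all boxes stay inside $\Delta_\varepsilon$, and your Montel/uniqueness argument via $\prod(1+h_k/2)=\infty$ then works.

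The paper handles this differently. It groups the $\Gamma_m$ into blocks $B_j$ of flow-time $\approx\tau$, so that $B_j\to\varphi^\tau$ in $C^1$ near $p_\omega$. This turns ``summable drift against slow, divergent hyperbolicity'' into ``vanishing drift against uniform hyperbolicity'' on a fixed bidisk, where a contraction-mapping graph transform applies. The price is an interpolation inside blocks. Your per-step route, once repaired, avoids that interpolation and gives explicit rates.

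Finally, in your third step the displayed Gronwall bound drops the forcing term $Ch_j|\eta_j|^2$, which is not summable a priori. It is closed by the coupling $\limsup|\xi_k|\le C(\limsup|\eta_k|)^2$ and $\limsup|\eta_k|\le C(\limsup|\xi_k|)^2$, which forces both limits to vanish once $\varepsilon$ is small.
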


\begin{proof}
{\bf Step 1.:} {\it We first prove that finite compositions of sufficiently late blocks of maps $\Gamma_j$  are arbitrary small perturbations of a flow of vector field $V$.}
\medskip

Choose a compact neighborhood $D'\Subset D$ of $p_\omega$. 
Let $\varphi^t$ denote the local flow of $V$. Fix $\tau>0$ small. Since
$$
\norm{\varphi^h-\operatorname{id}-hV}_{\infty}\le Ch^2,
$$
the assumptions imply
$$
\norm{\Gamma_m-\varphi^{h_m}}_{\infty}
\le
C\left(h_m^2+\norm{R_m}_{\infty}\right)
$$
{Here we are taking supremum norms on the fixed neighborhood $D'$.}

Choose a sequence $M=M_0<M_1<\cdots$ so that $M_{j+1}$ is the first index satisfying
$$
\tau_j:=\sum_{m=M_j}^{M_{j+1}-1}h_m\ge\tau.
$$
Then observe that $\tau_j\to\tau$. Next we define
$$
B_j:=\Gamma_{M_{j+1}-1}\circ\cdots\circ\Gamma_{M_j},
$$
and observe that the usual group property of the flow together with the summability of the  errors, gives uniform convergence
\begin{equation}\label{eq:block-convergence}
B_j\longrightarrow B:=\varphi^\tau
\end{equation}
on a fixed neighborhood of $p_\omega$. This completes step 1.

\medskip 
Recall that 
$\operatorname{spec}DV(p_\omega)=\{-3,1\}$, hence after translating $p_\omega$ to the origin and taking eigen-coordinates $(s,u)$,
$$
DB(0)
=
\begin{pmatrix}
e^{-3\tau}&0\\
0&e^\tau
\end{pmatrix}.
$$
Set
$$
\lambda:=e^{-3\tau},
\qquad
a:=e^{-\tau}.
$$
On a sufficiently small bidisk $|s|,|u|<R$ write
$$
B_j(s,u)
=
\left(
\lambda s+f_j(s,u),
a^{-1}u+g_j(s,u)
\right).
$$
{The maps $B_j$ are holomorphic, so the local uniform convergence in \eqref{eq:block-convergence} also gives convergence of their first derivatives on a smaller bidisk by Cauchy's estimates.}
By \eqref{eq:block-convergence}, after shrinking $R$ and discarding finitely many terms,
$$
\norm{Df_j},\norm{Dg_j}\le\epsilon,
\qquad
\delta_j:=\max\{|f_j(0,0)|,|g_j(0,0)|\}\longrightarrow0,
$$
where $\epsilon>0$ can be chosen as small as needed.
\medskip

{\bf Step 2.:} {\it We prove the existence of stable graphs for the sequence $B_j$, that converges to the locally stable manifold of $\varphi^\tau$.}
\medskip

Fix $0<\kappa<1$, and let $\mathcal H$ be the space of holomorphic functions $\gamma:\Delta_R\to\C$ satisfying
$$
\norm{\gamma}_\infty\le R/2,
\qquad
\operatorname{Lip}(\gamma)\le\kappa.
$$
Equip $\mathcal H^{\mathbb N}$ with the metric
$$
d(\gamma,\widetilde\gamma)
:=
\sup_j\norm{\gamma_j-\widetilde\gamma_j}_\infty,
$$
and note that this is a complete metric space.
\medskip 

For $\gamma=(\gamma_j)\in\mathcal H^{\mathbb N}$ define a self map $\mathcal G: {\mathcal H^{\mathbb N}\to\mathcal H^{\mathbb N}}$ by
\begin{equation}\label{eq:explicit-graph-transform}
(\mathcal G\gamma)_j(s)
:=
a\left[
\gamma_{j+1}\!\left(
\lambda s+f_j(s,\gamma_j(s))
\right)
-
g_j(s,\gamma_j(s))
\right].
\end{equation}
After discarding further terms we may assume $\delta:=\sup_j\delta_j$ is arbitrarily small. For $|s|<R$,
$$
\left|
\lambda s+f_j(s,\gamma_j(s))
\right|
\le
\left(
\lambda+\frac{\delta}{R}+\frac32\epsilon
\right)R.
$$
Thus, for $\epsilon$ and $\delta/R$ sufficiently small, the argument of $\gamma_{j+1}$ remains in $\Delta_R$. The same choices give
$$
\norm{(\mathcal G\gamma)_j}_\infty\le R/2.
$$
Moreover,
$$
\operatorname{Lip}((\mathcal G\gamma)_j)
\le
a\left[
\kappa\lambda+\epsilon(1+\kappa)^2
\right].
$$
Since $a\lambda=e^{-4\tau}<1$, we may choose $\epsilon$ so small that the last quantity is at most $\kappa$. Hence
$$
\mathcal G(\mathcal H^{\mathbb N})
\subset
\mathcal H^{\mathbb N}.
$$

If $\gamma,\widetilde\gamma\in\mathcal H^{\mathbb N}$ then, using \eqref{eq:explicit-graph-transform},
$$
\norm{(\mathcal G\gamma)_j-(\mathcal G\widetilde\gamma)_j}_\infty
\le
a\left[1+\epsilon(1+\kappa)\right]d(\gamma,\widetilde\gamma).
$$
Since $a<1$, after decreasing $\epsilon$ we obtain a constant $q<1$ such that
$$
d(\mathcal G\gamma,\mathcal G\widetilde\gamma)
\le q\,d(\gamma,\widetilde\gamma).
$$
Thus $\mathcal G$ is a contraction, and {let} $(\gamma_j)$ be its unique fixed point.
\medskip

The fixed-point identity is
$$
\gamma_j(s)
=
a\left[
\gamma_{j+1}\!\left(
\lambda s+f_j(s,\gamma_j(s))
\right)
-
g_j(s,\gamma_j(s))
\right],
$$
or equivalently
$$
a^{-1}\gamma_j(s)+g_j(s,\gamma_j(s))
=
\gamma_{j+1}\!\left(
\lambda s+f_j(s,\gamma_j(s))
\right).
$$
Hence the graphs
$$
W_j^s
:=
\{(s,\gamma_j(s)):|s|<R\}
$$
satisfy
$$
B_j(W_j^s)\subset W_{j+1}^s.
$$
To complete step 2. it remains to show that these graphs approach the stable graph of the limiting map. Let
$$
B(s,u)
=
\left(
\lambda s+f(s,u),
a^{-1}u+g(s,u)
\right)
$$
be the limit in \eqref{eq:block-convergence}, and let $\mathcal G_\infty$ be the operator obtained from \eqref{eq:explicit-graph-transform} by replacing $f_j,g_j$ by $f,g$. Its unique fixed point is a constant sequence $(\gamma_*,\gamma_*,\ldots)$. 
Since $B(0)=0$, the fixed-point equation at $s=0$, together with $\gamma_*$ being $\kappa$-Lipschitz, gives
$$
|\gamma_*(0)|\leq a[1+\epsilon(1+\kappa)]|\gamma_*(0)|.
$$
The coefficient is less than one by the choice made above, so $\gamma_*(0)=0$.

If for every $J$, we compare the fixed point $(\gamma_j)_{j\ge J}$ with the constant sequence $(\gamma_*)$, we get
$$
\sup_{j\ge J}\norm{\gamma_j-\gamma_*}_\infty
\le
\frac{1}{1-q}
\sup_{j\ge J}
\norm{
(\mathcal G\gamma_*)_j-\gamma_*
}_\infty.
$$
By \eqref{eq:block-convergence}, the right-hand side tends to zero as $J\to\infty$. Therefore $\gamma_j\longrightarrow\gamma_*$ uniformly and in particular $
\gamma_j(0)\longrightarrow0$. 

\medskip
{\bf Step 3.:} {\it We prove that on $W_0^s$ $\Gamma$-orbits converge to $p_\omega$.}
\medskip

{Let $(s_j,u_j)\in W_j^s$ be a block-time orbit, and let $x_j$ denote the corresponding point in the original $(z,w)$ coordinates.} Then $u_j=\gamma_j(s_j)$, and so 
$$
|u_j|
\le
|\gamma_j(0)|+\kappa|s_j|.$$
On the other hand $s_{j+1}
=
\lambda s_j+f_j(s_j,u_j)$,
and hence, for some $b<1$,
$$
|s_{j+1}|
\le
b|s_j|+\eta_j,
\qquad
\text{where }\eta_j\longrightarrow0.
$$
It follows that $s_j\to0$, locally uniformly on $W_0^s$, and therefore also $u_j\to0$. Thus the $B_j$-orbit converges to $p_\omega$.

It remains to control iterates within each block. For $M_j\le n<M_{j+1}$, set $$t_{j,n}:=\sum_{m=M_j}^{n-1}h_m.$$ The choice of $M_{j+1}$ gives $0\le t_{j,n}<\tau$. On a fixed neighborhood of $p_\omega$, the flow property and the summable errors yield
$$
\Gamma_{n-1}\circ\cdots\circ\Gamma_{M_j}(x_j)
=\varphi^{t_{j,n}}(x_j)+o(1),
$$
uniformly in $n$ and locally uniformly in the initial point. Since $x_j\to p_\omega$ and $\varphi^t(p_\omega)=p_\omega$, we obtain
$$
\begin{aligned}
\sup_{M_j\le n<M_{j+1}}
\norm{\Gamma_{n-1}\circ\cdots\circ\Gamma_{M_j}(x_j)-p_\omega}
&\le C\norm{x_j-p_\omega}
+C\sum_{m\ge M_j}\bigl(h_m^2+\sup_{x\in D_0}\norm{R_m(x)}\bigr)
\end{aligned}
$$
where each of the two terms on the right side converge to zero as $j\to\infty$.
Thus the full $\Gamma$-orbit converges to $p_\omega$.
Taking $
W_\omega:=W_0^s$ in the original coordinates completes the proof.
\end{proof}

{We can now obtain points in the basin that do not belong to $\Ucal$.}

\begin{theorem}\label{thm:stable-disks}
Assume
$$
\sum_n\lambda_n<\infty,
\qquad
\sum_n r_n^2<\infty,
\qquad
\sum_m\abs{r_{2m+1}-r_{2m}}<\infty.
$$
Then, for every $\omega^3=1$, there exists a holomorphic disk $\widetilde W_\omega\subset\Omega$ such that
$$
K_n(p)\longrightarrow p_\omega
$$
locally uniformly for $p\in \widetilde W_\omega$. Consequently, $\Ucal\subsetneq\Omega$.
\end{theorem}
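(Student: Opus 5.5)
The plan is to verify the hypotheses of \Cref{lem:asymptotic-flow-stable} for the paired maps $\Gamma_m=G_{2m+1}\circ G_{2m}$, and then pull the resulting stable disks back to $\Omega$ by the finite composition $K_{2M}$.

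\textbf{The hypotheses on $h_m$.} First, $r_n=\lambda_n/L_n\to0$. Indeed $\sum r_n^2<\infty$, or more directly $L_n\ge\lambda_n$ and $\log(L_{n}/L_{n+1})=-\log(1-r_n)$. Since $L_n\to0$, we get $\sum_n -\log(1-r_n)=\infty$, hence $\sum_n r_n=\infty$, and therefore $\sum_m h_m=\infty$. Also $h_m>0$ and $h_m\to0$. Next, $\sum h_m^2\le2\sum r_n^2<\infty$. By \eqref{eq:pair-error}, $\sup_{D'}\|R_m\|\le C_{D'}(h_m^2+|r_{2m+1}-r_{2m}|)$, which is summable. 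So \Cref{lem:asymptotic-flow-stable} applies at each $p_\omega$ and yields, after discarding indices below some $M$, a disk $W_\omega$ on which $\Gamma_{m-1}\circ\cdots\circ\Gamma_M\to p_\omega$ locally uniformly.

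\textbf{Pulling back to $\Omega$.} Set $\widetilde W_\omega:=K_{2M}^{-1}(W_\omega)$. Here $K_{2M}=L_{2M}\Phi_{2M}$ is a polynomial automorphism, so $\widetilde W_\omega$ is a holomorphic disk. For $p\in\widetilde W_\omega$ we have $K_{2m}(p)=\Gamma_{m-1}\circ\cdots\circ\Gamma_M(K_{2M}(p))\to p_\omega$. The odd indices satisfy $K_{2m+1}=G_{2m}(K_{2m})$, and $G_{2m}\to\mathrm{id}$ locally uniformly because $r_{2m}\to0$. Hence $K_n(p)\to p_\omega$ locally uniformly on $\widetilde W_\omega$.

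\textbf{Membership in $\Omega$.} From $F(n)=P_nS^n\Phi_n=(P_n/L_n)S^nK_n$ and the boundedness of $K_n(p)$, it suffices to show $P_n/L_n\to0$. Since $L_n\ge\lambda_n=P_n/a_{n+1}$, we get $P_n/L_n\le a_{n+1}\to0$. Therefore $F(n)(p)\to0$, so $\widetilde W_\omega\subset\Omega$.

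\textbf{Proper inclusion.} Since $p_\omega\neq0$, we have $K_n(p)\not\to0$, so by \eqref{eq:core} $p\notin\Ucal$. Alternatively, $\|\Phi_n(p)\|\sim\|p_\omega\|/L_n\to\infty$, so \Cref{prop:bounded-core} gives the same conclusion. Hence $\Ucal\subsetneq\Omega$.

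\textbf{Main obstacle.} The only genuinely delicate step is checking that \eqref{eq:pair-error} holds on a neighborhood of each $p_\omega$ with the stated summable bound. This is a direct expansion of $G_{2m+1}\circ G_{2m}$: the first-order terms give $h_mV$ with the coefficient asymmetry absorbed into $|r_{2m+1}-r_{2m}|$, and the remaining terms are $O(h_m^2)$ on compacts. Everything else reduces to \Cref{lem:asymptotic-flow-stable}.
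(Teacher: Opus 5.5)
Your argument is correct and follows the paper's proof essentially step by step. You use the telescoping identity $L_{n+1}=(1-r_n)L_n$ to get $\sum_n r_n=\infty$ and then verify the hypotheses of \Cref{lem:asymptotic-flow-stable}. You take the same pull-back $\widetilde W_\omega=K_{2M}^{-1}(W_\omega)$, use the bound $\|F(n)\|=(P_n/L_n)\|K_n\|=a_{n+1}r_n\|K_n\|$ for membership in $\Omega$, and use \eqref{eq:core} for exclusion from $\Ucal$.

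One small slip: $L_n\ge\lambda_n$ only gives $r_n\le1$, not $r_n\to0$, so your ``more direct'' alternative does not work. This is harmless, since $\sum_n r_n^2<\infty$ already gives $r_n\to0$.
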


\begin{proof}
The square-summability assumption gives $r_n\to0$. Moreover,
$$
\prod_{n=N}^{M}(1-r_n)=\frac{L_{M+1}}{L_N}\longrightarrow0.
$$
Since $r_n\to0$, this implies $\sum_nr_n=\infty$. Hence
$$
h_m=r_{2m}+r_{2m+1}\to0,
\qquad
\sum_mh_m=\infty.
$$
Furthermore, $h_m^2\le2(r_{2m}^2+r_{2m+1}^2)$. Together with our assumptions and \eqref{eq:pair-error} this implies {that, for every relatively compact neighborhood $D'\Subset D$,}
$$
\sum_m\left(h_m^2+{\sup_{x\in D'}\norm{R_m(x)}}\right)<\infty.
$$

By \Cref{lem:asymptotic-flow-stable} there exists an integer $M$ and a one-dimensional local holomorphic disk $W_\omega$  such that
$$
\Gamma_{m-1}\circ\cdots\circ\Gamma_M(x)\longrightarrow p_\omega
$$
locally uniformly for $x\in W_\omega$. {Define $\widetilde W_\omega:=K_{2M}^{-1}(W_\omega)$.}

Since $K_{2M}$ is an automorphism $\widetilde W_\omega$ is a holomorphic disk in the original coordinates on which the sequence $K_{2m}\to p_\omega$ locally uniformly.

Since $r_{2m}\to0$, the maps $G_{2m}$ converge uniformly to the identity on a neighborhood of $p_\omega$. Therefore
$$
K_{2m+1}=G_{2m}\circ K_{2m}\longrightarrow p_\omega
$$
locally uniformly on {$\widetilde W_\omega$}, and hence $K_n\to p_\omega$ locally uniformly there.

\medskip

In particular $(K_n)$ is locally bounded on $\widetilde W_\omega$ and does not converge to zero. Since
$\|F(n)\|=a_{n+1}r_n\|K_n\|$
it clearly follows that $F(n)|_{ \widetilde W_\omega}\rightarrow 0$, because $a_{n+1}\to0$ and $0<r_n<1$. Thus $\widetilde W_\omega\subset\Omega$. On the other hand, \eqref{eq:core} gives $\widetilde W_\omega\cap\Ucal=\varnothing$. {The disks for the three values of $\omega$ are pairwise disjoint, since $K_n$ has distinct limits on them. This completes the proof.}
\end{proof}

\subsection{{Construction of a sequence with $\Ucal\subsetneq\Omega$}}

We now realize the hypotheses of \Cref{thm:stable-disks} in the original H\'enon family.

\begin{theorem}\label{thm:proper-core-example}
For every fixed $0<c<1$, there is a positive sequence $(q_n)$ such that
$$
q_{n+1}/q_n\to2,\qquad {\rho_n\to0},\qquad
\sum_n\lambda_n<\infty,\qquad {\alpha_n\to0},
$$
and $\Ucal\subsetneq\Omega$. Moreover, $\Omega\setminus \Ucal$ contains three pairwise disjoint holomorphic disks, and $\Omega$ is parabolic. In particular, $\Omega$ is not a Short~$\mathbb C^2$.
\end{theorem}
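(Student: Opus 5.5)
The plan is to build the sequence backwards from the quantities that enter \Cref{thm:stable-disks}, namely $\lambda_n$, $L_n$ and $r_n=\lambda_n/L_n$, and then recover $(q_n)$. The exact relations are $\lambda_n=c^{Q_n-q_{n+1}}$ and $\lambda_n/\lambda_{n-1}=c^{d_n}$. Writing $\mu=\log(1/c)$ and $\ell_n:=\mu^{-1}\log(1/\lambda_n)$, they become
$$
q_{n+1}=Q_n-\ell_n,\qquad d_n=\ell_n-\ell_{n-1},\qquad \rho_{n+1}=\rho_n-\frac{d_n}{2^{n+1}}.
$$
So once $(\lambda_n)$ is fixed, $(q_n)$ is determined by the single free parameter $q_1$, and $\rho_n$ decreases to $\rho_1-\sum_k d_k2^{-k-1}$. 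I would choose $q_1$ to make this limit $0$. Then
$$
\rho_n=\sum_{k\ge n}d_k2^{-k-1},\qquad\text{hence}\qquad q_n=\sum_{k\ge n}d_k2^{n-k-1}.
$$
This formula converts all the growth requirements into conditions on $(d_k)$: $q_n\to\infty$ requires the weighted tail averages of $d_k$ to diverge, while $q_{n+1}/q_n\to2$ is equivalent to $d_n/q_n\to0$.

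The requirements of \Cref{thm:stable-disks} are naturally met by a slowly varying backbone, for instance $L_n\approx1/\log n$. This gives $r_n\approx 1/(n\log n)$, so $\sum r_n^2<\infty$, $\sum r_n=\infty$ and $\sum_m\abs{r_{2m+1}-r_{2m}}<\infty$. It also gives $\lambda_n\approx 1/(n\log^2n)$, which is summable, and
$$
\alpha_n\approx\frac{\log\log n}{\log n}\to0 .
$$
The obstacle is that this backbone forces $d_n=O(1/n)$, and the formula above then gives $q_n\to0$. One therefore has to superpose large increments of $d$ that leave $r_n$ essentially undisturbed.

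My first attempt would be to add to $\ell_n$ a slowly varying term $E_n$ with increments $e_n=E_n-E_{n-1}\approx 2^n\varepsilon_n$, where $\varepsilon_n\downarrow0$ slowly (say $\varepsilon_n=1/n^2$). Then $\rho_n\approx\tfrac12\sum_{k\ge n}\varepsilon_k\to0$, $q_n\approx 2^{n}/n\to\infty$ and $d_n/q_n\to0$. The price is that $\lambda_n$ now decays superexponentially, which makes $r_n\to1$ and $\alpha_n\to1$ and destroys the stable-disk hypotheses. To compensate, I would make the added term oscillate in sign inside each pair $(2m,2m+1)$, so that $\lambda_{2m+1}$ returns to the backbone. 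The pair differences $\abs{r_{2m+1}-r_{2m}}$ then have to be checked carefully, and the net contribution $\sim D_m2^{-2m-2}$ to $\rho$ must retain the right size.

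Reconciling $q_n\to\infty$ and $q_{n+1}/q_n\to2$ with a backbone of $L_n$ that keeps $r_n$ square-summable with summable pair differences is the step I expect to be the main obstacle. If the pairwise oscillation is too crude, I would spread the perturbation over long blocks and tune the block lengths.

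Once such a sequence is fixed, the conclusions are assembled from earlier results:
\begin{enumerate}[label=(\roman*)]
\item $\rho_n\to0$ and $q_{n+1}/q_n\to2$, so \Cref{cor:regular-short} (via \Cref{lem:Q-ratio} and \Cref{thm:second-classification}) gives that $\Omega$ is parabolic and not a Short $\C^2$.
\item $\sum\lambda_n<\infty$, so \Cref{thm:summable-core} gives $\Ucal$.
\item \Cref{thm:stable-disks} produces, for each cube root of unity $\omega$, a disk $\widetilde W_\omega\subset\Omega\setminus\Ucal$ on which $K_n\to p_\omega$. These disks are pairwise disjoint, so $\Ucal\subsetneq\Omega$.
\end{enumerate}
Finally, $q_n^{1/n}\to2$ follows from the ratio condition.
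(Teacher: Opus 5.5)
Your bookkeeping is correct: $q_{n+1}=Q_n-\ell_n$, $\rho_{n+1}=\rho_n-d_n2^{-n-1}$, $q_n=\sum_{k\ge n}d_k2^{n-k-1}$ once $\rho_n\to0$, and $q_{n+1}/q_n\to2\iff d_n/q_n\to0$. Your final assembly via \Cref{cor:regular-short}, \Cref{thm:summable-core} and \Cref{thm:stable-disks} is also exactly how the paper concludes. But the statement is an existence theorem and you never exhibit a sequence. Moreover, the one concrete mechanism you propose cannot work.

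In your pairwise oscillation, one member of each pair, say $\lambda_{2m}$, is multiplied by $c^{\pm D_m}$ with $D_m$ of order $2^{2m}\varepsilon_{2m}$. The factor $c^{-D_m}$ destroys summability. The factor $c^{+D_m}$ makes $r_{2m}$ negligible, so $\sum_m r_{2m}<\infty$. Now $\prod_{n=N}^{M}(1-r_n)=L_{M+1}/L_N\to0$ always forces $\sum_n r_n=\infty$. Hence $\sum_m r_{2m+1}=\infty$, and so $\sum_m\abs{r_{2m+1}-r_{2m}}=\infty$, violating a hypothesis of \Cref{thm:stable-disks}. The same happens with the roles of $2m$ and $2m+1$ exchanged. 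This is not a technicality. With $r_{2m}\approx0$ the paired map $\Gamma_m$ is approximately $\operatorname{id}+r_{2m+1}(-z+w^2,-w)$, a field whose only zero is the origin, so the saddle points $p_\omega$ that produce the disks are lost. Your fallback ``spread the perturbation over long blocks'' is never actually carried out.

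The missing idea is that you need neither a smooth backbone in $n$ nor any return of $\lambda$. The paper takes all $d_n\ge0$: $d_n=0$ on long blocks, so $q_{n+1}=2q_n$ and $\lambda_n$ is constant there, with a single jump at each transition. Concretely, $q_n=2^n/m$ for $N_m\le n<N_{m+1}$. In your formula this means $d_{N_{m+1}-1}=2^{N_{m+1}}/(m(m+1))$ and $d_n=0$ otherwise. This gives $\rho_n=1/m\to0$ and transition ratios $2m/(m+1)\to2$.

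Then $\lambda_n=\Lambda_m$ on the $m$-th $\lambda$-block, where $\Lambda_{m+1}=\Lambda_m c^{2^{N_{m+1}}/(m(m+1))}$ is superexponentially small. The block length is tuned to $\ell_m=\lceil 1/(m^2\Lambda_m)\rceil$, so each block contributes about $m^{-2}$ to $\sum_n\lambda_n$. Consequently:
\begin{itemize}
\item $S_{m+1}:=\sum_{j>m}\ell_j\Lambda_j\asymp1/m$ and $L_n\asymp1/m$ on block $m$.
\item $\alpha_n\le(\log m+O(1))/\log(1/\Lambda_m)\to0$.
\item $r_n\le\Lambda_m/S_{m+1}\lesssim m\Lambda_m$, and the block sums of $r_n^2$ are bounded by $\Lambda_m/S_{m+1}$.
\item $r_n$ is monotone within each block, so the total variation of $(r_n)$ is at most $3\sum_m\Lambda_m/S_{m+1}<\infty$. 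This gives the pair-difference condition.
\end{itemize}
Here $L_n$ decays slowly because the blocks are astronomically long, not because $\lambda_n$ is moderate. That is exactly what reconciles $q_n\to\infty$ with small, slowly varying $r_n$.
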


\begin{proof}
Set $N_1=1$ and $\Lambda_1=c^{-2}$. Recursively define
$$
\ell_m:=\left\lceil\frac{1}{m^2\Lambda_m}\right\rceil,
\qquad
N_{m+1}:=N_m+\ell_m,
\qquad
\Lambda_{m+1}:=\Lambda_m c^{\,2^{N_{m+1}}/[m(m+1)]},
$$

and put

$$
q_n:=\frac{2^n}{m},
\qquad \text{for } N_m\le n<N_{m+1}.
$$

Since $\ell_m\ge1$, we have $N_m\ge m$ for every $m$. Recall that
$$
\lambda_0=c^{-q_1}=c^{-2}=\Lambda_1,
\qquad
\frac{\lambda_n}{\lambda_{n-1}}=c^{\,2q_n-q_{n+1}}.
$$

Inside the $m$-th $q$-block one has $q_{n+1}=2q_n$, whereas at the transition between the $m$-th and $(m+1)$-st blocks,

$$
2q_{N_{m+1}-1}-q_{N_{m+1}}
=
\frac{2^{N_{m+1}}}{m(m+1)}.
$$

It follows inductively that

$$
\lambda_n=\Lambda_m,
\qquad N_m-1\le n\le N_{m+1}-2.
$$

Write $\mu:=\log(1/c)>0$. For $m\ge2$ we have

$$
\log\frac1{\Lambda_m}
=
\mu\left(
-2+\sum_{j=1}^{m-1}\frac{2^{N_{j+1}}}{j(j+1)}
\right).
$$

Keeping only the last term and using $N_m\ge m$, we obtain

$$
\log\frac1{\Lambda_m}
\ge
\mu\left(\frac{2^{N_m}}{m(m-1)}-2\right)
\ge
\mu\left(\frac{2^m}{m(m-1)}-2\right).
$$

Hence

$$
\Lambda_m
\le
e^{2\mu}\exp\left(-\mu\frac{2^m}{m(m-1)}\right),
$$

so in particular

$$
\sum_{m=1}^{\infty}m\Lambda_m<\infty.
$$

By the definition of $\ell_m$,

$$
\frac1{m^2}
\le
\ell_m\Lambda_m
<
\frac1{m^2}+\Lambda_m.
$$

Therefore

$$
\sum_{n=0}^{\infty}\lambda_n
=
\sum_{m=1}^{\infty}\ell_m\Lambda_m
<\infty.
$$

Set

$$
S_{m+1}:=L_{N_{m+1}-1}=\sum_{j>m}\ell_j\Lambda_j.
$$

The preceding estimate gives

$$
\sum_{j>m}\frac1{j^2}
\le
S_{m+1}
<
\sum_{j>m}\frac1{j^2}+\sum_{j>m}\Lambda_j.
$$

Moreover,

$$
\sum_{j>m}\Lambda_j
\le
\frac1{m+1}\sum_{j>m}j\Lambda_j
=
o\left(\frac1m\right),
$$

while

$$
\sum_{j>m}\frac1{j^2}\asymp\frac1m.
$$

Thus

$$
S_{m+1}\asymp\frac1m.
$$

Suppose that $n$ belongs to the $m$-th $\lambda$-block, and let $k$ be the number of terms of that block remaining from $n$ onward. Then $1\le k\le\ell_m$ and $L_n=k\Lambda_m+S_{m+1}$.
Since

$$
k\Lambda_m
\le
\ell_m\Lambda_m
<
\frac1{m^2}+\Lambda_m,
$$
we obtain, uniformly on the $m$-th block, $L_n\asymp\frac1m$. Consequently,

$$
\log\frac1{L_n}=\log m+O(1).
$$

Since $\lambda_n=\Lambda_m$, the previous lower bound for $\log(1/\Lambda_m)$ yields

$$
0\le
\alpha_n
=
\frac{\log(1/L_n)}{\log(1/\lambda_n)}
\le
\frac{\log m+O(1)}
{\mu\bigl(2^m/[m(m-1)]-2\bigr)}
\longrightarrow0.
$$
Hence {$\alpha_n\to0$, and therefore $\alpha_*=0$.} 
\medskip

We next verify the asymptotic properties of $(q_n)$. On the $m$-th block, $\rho_n={q_n}/{2^n}=\frac1m$, so $\rho_n\to0$. Moreover, ${q_{n+1}}/{q_n}=2$
inside each block, while at a transition,

$$
\frac{q_{N_{m+1}}}{q_{N_{m+1}-1}}
=
\frac{2m}{m+1}
\longrightarrow2.
$$
Hence ${q_{n+1}}/{q_n}\longrightarrow2$. 

If $n$ belongs to the $m$-th block, then $m\le n$, and therefore

$$
q_n^{1/n}
=
2m^{-1/n}
=
2\exp\left(-\frac{\log m}{n}\right)
\longrightarrow2,
$$

since

$$
0\le\frac{\log m}{n}\le\frac{\log n}{n}\longrightarrow0.
$$

In particular,

$$
q_n\ge\frac{2^n}{n}\longrightarrow\infty.
$$

It remains to verify the hypotheses of \cref{thm:stable-disks}. On the $m$-th $\lambda$-block,

$$
r_n
=
\frac{\Lambda_m}{S_{m+1}+k\Lambda_m}.
$$
where {$k$ is again the number of terms of that block remaining from $n$ onward.}
Since $x\mapsto(S_{m+1}+\Lambda_m x)^{-2}$ is decreasing,

$$
\begin{aligned}
\sum_{\substack{n\text{ in}\\\text{block }m}}r_n^2
&=
\Lambda_m^2
\sum_{k=1}^{\ell_m}
\frac1{(S_{m+1}+k\Lambda_m)^2}\\
&\le
\Lambda_m^2
\int_0^\infty
\frac{dx}{(S_{m+1}+\Lambda_m x)^2}=
\frac{\Lambda_m}{S_{m+1}}.
\end{aligned}
$$

Since $S_{m+1}\asymp m^{-1}$,

$$
\sum_{m=1}^{\infty}\frac{\Lambda_m}{S_{m+1}}
\le
C\sum_{m=1}^{\infty}m\Lambda_m
<\infty,
$$

and consequently

$$
\sum_{n=0}^{\infty}r_n^2<\infty.
$$

Furthermore, $r_n$ is increasing within each $\lambda$-block. If

$$
{r_m^{\max}}:=\max\{r_n:N_m-1\le n\le N_{m+1}-2\},
$$

then

$$
{r_m^{\max}}\le\frac{\Lambda_m}{S_{m+1}}.
$$
For each $m$ we {then} have

\begin{align*}
\sum_{n=N_m-1}^{N_{m+1}-2}|r_{n+1}-r_n|
&=(r_{N_{m+1}-2}-r_{N_m-1})
 +|r_{N_{m+1}-1}-r_{N_{m+1}-2}|\\
&\le 2r_m^{\max}+r_{m+1}^{\max}.
\end{align*}

Hence

$$
\sum_{n=0}^{\infty}|r_{n+1}-r_n|
\le
3\sum_{m=1}^{\infty}{r_m^{\max}}
\le
3\sum_{m=1}^{\infty}\frac{\Lambda_m}{S_{m+1}}
<\infty.
$$

In particular,

$$
\sum_{m=0}^{\infty}|r_{2m+1}-r_{2m}|<\infty.
$$

Together with $\sum_n\lambda_n<\infty$ and $\sum_n r_n^2<\infty$, these are the hypotheses of \cref{thm:stable-disks} which together with \cref{cor:regular-short} completes the proof.
\end{proof}

%%%%%%%%%%%%%

\begin{remark}\label{rem:proper-core-not-classification}
{The strict inclusion $\Ucal\subsetneq\Omega$ shows that derivative normalization does not capture every point of the non-autonomous basin in this example. Nevertheless it does not determine whether $\Omega$ is biholomorphic to $\C^2$; another construction of a biholomorphism might still be possible.}
\end{remark}

\subsection{Concluding remarks}
By \Cref{thm:short}, a sequence relevant to \Cref{q:original} must satisfy $\liminf\rho_n=0$. When $q_{n+1}/q_n\to2$, \Cref{cor:regular-short} then shows that its non-autonomous basin is parabolic manifold. The first remaining question is whether every such basin is biholomorphic to $\C^2$. In the summable case, t.i. the sum in \eqref{eq:sum} is finite, \Cref{thm:alpha-capture} gives equality $\Ucal=\Omega$ when $\alpha_*>0$, whereas \Cref{thm:proper-core-example} shows that equality can fail when $\alpha_*=0$. Determining when equality holds at this endpoint would sharpen the capture criterion.

If $\sum_n\lambda_n=\infty$, the quadratic coefficients of $\Phi_n$ do not converge, so derivative normalization does not give the map used in \Cref{thm:summable-core}. Another normalization may be needed to study the full basin. Critical sequences for which $q_n^{1/n}\to2$ but $q_{n+1}/q_n$ does not converge present a further question: the product hypothesis in \Cref{thm:second-classification} need not follow from the critical growth rate alone. Also note that all main results presented here can easily be generalized to complex coefficients $a_n$.


\begin{thebibliography}{13}
\bibitem{AAM} Marco Abate, Alberto Abbondandolo, and Pietro Majer,
\emph{Stable manifolds for holomorphic automorphisms}, J. Reine Angew. Math. \textbf{690} (2014), 217--247.
\bibitem{AS} Aydin Aytuna and Azimbay Sadullaev,
\emph{Parabolic {Stein} manifolds}, Math. Scand. \textbf{114} (2014), no.~1, 86--109.
\bibitem{BV} Sayani Bera and Kaushal Verma,
\emph{Uniform non-autonomous basins of attraction}, Invent. Math. \textbf{238} (2024), no.~3, 995--1040.
\bibitem{Fshort} John Erik Forn\ae ss,
\emph{Short $\C^k$}, Complex Analysis in Several Variables---Memorial Conference of Kiyoshi Oka's Centennial Birthday, Kyoto/Nara 2001, Adv. Stud. Pure Math., vol.~42, Math. Soc. Japan, Tokyo, 2004, pp.~95--108.
\bibitem{FP} John Erik Forn\ae ss and Ratna Pal,
\emph{Notes on the Short $\C^k$'s}, J. Geom. Anal. \textbf{32} (2022), no.~4, Paper No.~133, 22~pp.
\bibitem{FS} John Erik Forn\ae ss and Berit Stens\o nes,
\emph{Stable manifolds of holomorphic hyperbolic maps}, Internat. J. Math. \textbf{15} (2004), no.~8, 749--758.
\bibitem{Klimek} Maciej Klimek,
\emph{Pluripotential theory}, London Mathematical Society Monographs. New Series, vol.~6, The Clarendon Press, Oxford University Press, New York, 1991.
\bibitem{Peters} Han Peters,
\emph{Perturbed basins of attraction}, Math. Ann. \textbf{337} (2007), no.~1, 1--13.
\bibitem{PS} Han Peters and Iris Marjan Smit,
\emph{Adaptive trains for attracting sequences of holomorphic automorphisms}, Complex Anal. Synerg. \textbf{1} (2015), Article No.~3.
\bibitem{PW} Han Peters and Erlend Forn\ae ss Wold,
\emph{Non-autonomous basins of attraction and their boundaries}, J. Geom. Anal. \textbf{15} (2005), no.~1, 123--136.
\bibitem{RR} Jean-Pierre Rosay and Walter Rudin,
\emph{Holomorphic maps from $\C^n$ to $\C^n$}, Trans. Amer. Math. Soc. \textbf{310} (1988), no.~1, 47--86.
\bibitem{Wold} Erlend Forn\ae ss Wold,
\emph{Fatou--{Bieberbach} domains}, Internat. J. Math. \textbf{16} (2005), no.~10, 1119--1130.
\bibitem{Zahariuta} V.~P. Zahariuta,
\emph{Spaces of analytic functions and complex potential theory}, Linear Topological Spaces and Complex Analysis (Aydin Aytuna, ed.), vol.~1, METU--T\"UB\.{I}TAK, Ankara, 1994, pp.~74--146.
\end{thebibliography}
\end{document}